\renewcommand{\baselinestretch}{1.20}
\newcommand{\checkxpos}[3][]{%
\ifdim \zposx{#2}sp < 20000000sp%
\mynote[#1]{#3}%
\else%
\note[#1]{#3}%
\fi%
}
\newcommand{\mytodo}[2][]{%
\zsaveposx{todo\the\todocount}%
\checkxpos[#1]{todo\the\todocount}{#2}%
\global\advance\todocount1\relax
}
\newcommand{\mynote}[2][]{{%
	\let\marginpar\marginnote
	\reversemarginpar
	\renewcommand{\baselinestretch}{0.8}%
	\todo[#1]{#2}}}
\newcommand{\note}[2][]{\renewcommand{\baselinestretch}{0.8}\todo[#1]{#2}}
\newtheorem{theorem}{Theorem}[section]
\newtheorem{remark}{Remark}[section]
\newtheorem{lemma}[theorem]{Lemma}
\newtheorem{pro}[theorem]{Proposition}
\renewcommand{\div}{{\rm div }}
\newcommand{\bt}{\begin{theorem}}
\newcommand{\bl}{\begin{lemma}}
	\newcommand{\el}{\end{lemma}}
\newcommand{\et}{\end{theorem}}
\newcommand{\bn}{\begin{eqnarray}}
\newcommand{\en}{\end{eqnarray}}
\newcommand{\bnn}{\begin{eqnarray*}}
\newcommand{\enn}{\end{eqnarray*}}
\newcommand{\ba}{\begin{aligned}}
\newcommand{\ea}{\end{aligned}}
\newcommand{\be}{\begin{equation}}
\newcommand{\ee}{\end{equation}}
\newcommand{\Bv}{{\boldsymbol{v}}}
\newcommand{\Bu}{{\boldsymbol{u}}}
\newcommand{\Be}{{\boldsymbol{e}}}
\newcommand{\BF}{{\boldsymbol{F}}}
\newcommand{\BU}{\boldsymbol{U}}
\newcommand{\mcA}{\mathcal{A}}
\newcommand{\mcH}{\mathcal{H}}
\newcommand{\mfa}{\mathfrak{a}}
\newcommand{\mfg}{\mathfrak{g}}
\newcommand{\inte}{\int_{-1}^1}
\newcommand{\Z}{\mathbb{Z}}
\newcommand{\Q}{\mathcal{Q}}
\begin{document}

\title[Stability and Uniqueness of Poiseuille Flows]
{Uniqueness and uniform structural stability of Poiseuille flows with large fluxes  in  two-dimensional strips}

\author{Kaijian Sha}
\address{School of mathematical Sciences, Shanghai Jiao Tong University, 800 Dongchuan Road, Shanghai, China}
\email{kjsha11@sjtu.edu.cn}

\author{Yun Wang}
\address{School of Mathematical Sciences, Center for dynamical systems and differential equations, Soochow University, Suzhou, China}
\email{ywang3@suda.edu.cn}

\author{Chunjing Xie}
\address{School of mathematical Sciences, Institute of Natural Sciences,
	Ministry of Education Key Laboratory of Scientific and Engineering Computing,
	and CMA-Shanghai, Shanghai Jiao Tong University, 800 Dongchuan Road, Shanghai, China}
\email{cjxie@sjtu.edu.cn}

\begin{abstract}
	In this paper, we prove the uniform nonlinear structural stability of Poiseuille flows with suitably large flux for the steady Navier-Stokes system in a two-dimensional  strip with arbitrary period.  Furthermore, the well-posedness theory for the Navier-Stokes system is also proved even when the $L^2$-norm of the external force is large. In particular, if the vertical velocity is suitably small where the smallness is independent of the flux, then Poiseuille flow is the unique solution of the steady Navier-Stokes system in the periodic strip. The key point is to establish uniform a priori estimates for the corresponding linearized problem via the boundary layer analysis, where we explore the particular features of odd and even stream functions. The analysis for the even stream function is new, which not only generalizes the previous study for the symmetric flows in \cite{Rabier1}, but also provides an explicit relation between the flux and period.
\end{abstract}

\keywords{Poiseuille flows, steady Navier-Stokes system, two-dimensional, uniform structural stability, periodic.}
\subjclass[2020]{
	35Q30, 35G60,  76D05, 76D03, 76E05}


\maketitle

\section{Introduction and Main Results}
The study on the well-posedness for the steady Navier-Stokes system
\begin{equation}\label{NS}
	\left\{
	\begin{aligned}
		&\boldsymbol{u}\cdot \nabla \boldsymbol{u} +\nabla p=\Delta \boldsymbol{u}+\BF,\\
		&\div~\boldsymbol{u}=0,
	\end{aligned}
	\right.
\end{equation}
in a domain $\Omega$ supplemented with suitable boundary conditions was pioneered by Leray (\hspace{1sp}\cite{Leray}).  In the system \eqref{NS}, $\Bu=(u_1,\cdots, u_N)$ and $\BF=(F_1,\cdots,F_N)(N=2\text{ or }3)$ are  the velocity field and external force, respectively. When the domain $\Omega$ is bounded, the existence of solutions was established in \cite{Leray, Hopf, KPR} and references therein. However, the uniqueness is a much more challenging issue (\hspace{1sp}\cite{Galdi}). In fact, the uniqueness of solutions with large boundary data is an open problem.

If $\Omega$ is an infinitely long channel, in 1950s, Leray proposed a problem to study the well-posedness of the Navier-Stokes system \eqref{NS} supplemented with no-slip boundary conditions and the constraint that the solutions tend to some shear flows at far fields (\hspace{1sp}\cite{Galdi}). This problem is called Leray problem nowadays. In particular, if a two-dimensional channel $\Omega$ tends to the strip $\mathbb{R}\times [-1,1]$ at far field, Leray problem is to show the existence of solutions of Navier-Stokes system, which tend to shear flows for \eqref{NS} satisfying
\begin{equation}\label{BC}
	\boldsymbol{u}=0\text{ on  } y=\pm 1,~~ \ \ \ \int_{-1}^1 u_1(x, y) dy =\Phi.
\end{equation}
The straightforward computations show that the shear flows have the following explicit form
\begin{equation}\label{Poiseuille}
	\boldsymbol{U}=(U(y), 0)=\left(\frac34\Phi(1-y^2), \, 0 \right).
\end{equation}
This flow $\boldsymbol{U}$ is called the Poiseuille flow.
Here  $\Phi\in \mathbb{R}$ is called the flux of the flow.
Without loss of generality, the flux  $\Phi$ is assumed to be nonnegative in this paper.

The first breakthrough for  Leray problem  in infinitely long nozzles was made by Amick (\hspace{1sp}\cite{A1,A2,AF}) and Ladyzhenskaya and Solonnikov (\hspace{1sp}\cite{LS}). It was proved in \cite{A1, LS} that there is a unique solution for  Leray problem as long as the flux is small. Then the convergence rate of the solution (with small flux) at far fields was studied in \cite{AP, Galdi, Horgan, HW} and references therein. One may  also refer to \cite{KP, Morimoto, MF, NP1,NP2, NP3} and references therein for some other studies on the steady Navier-Stokes system in domains with noncompact boundaries. A significant open problem posed in \cite{Galdi} is to prove the existence of solutions for Leray problem when the flux is large. In fact,  it was proved in \cite{LS} that there exists a solution with arbitrary flux of the steady Navier-Stokes system in an infinitely long channel. Therefore, in order to solve Leray problem  in infinitely long channels, one needs only to show that the solutions obtained in \cite{LS} tend to Poiseuille flows at far fields.   To the best of our knowledge, there is no result on the far field behavior of steady solutions with large flux  in a channel except for the axisymmetric solutions in a pipe studied in \cite{WX2}. On the other hand, with the aid of the local compactness of the solutions and blowup techniques,  in order to get the far field behavior of the solutions obtained in \cite{LS}, one needs only to prove a Liouville type theorem, i.e., the global uniqueness of the Poiseuille flows in a straight channel.

As we mentioned before, the uniqueness of solutions of the steady Navier-Stokes system is  very challenging even for flows in bounded domains. The analysis for the solutions of the Navier-Stokes system in unbounded domains is more delicate. For the axisymmetric flows in a pipe,  the  local uniqueness of  solutions with arbitrary flux and even large external force was established in \cite{WX1,WX2}. The problem for two-dimensional flows is much more subtle. When the flow
is symmetric in a strip, Rabier (\hspace{1sp}\cite{Rabier1}) proved the existence and local uniqueness of steady solutions for the Navier-Stokes system with arbitrary flux and sufficiently small external force.
While for the solutions without symmetry in a strip, the existence
of solutions was only proved when neither the flux nor the external force is large \cite{Rabier2}. The well-posedness of steady Navier-Stokes system in a strip with large external force and arbitrary flux is still unsolved.

In fact, the dynamical stability of Poiseuille flows also highly relates to the uniqueness of these flows in the steady setting.
A typical method to study the hydrodynamic stability of Poiseuille flows is the so-called normal mode analysis (\hspace{1sp}\cite{L1,Orszag,DR}), where the analysis on the spectral problem
\begin{equation} \label{spectralpb}
	\left\{ \begin{aligned}
		&s\Bv- \Delta \Bv+\boldsymbol{U} \cdot \nabla \Bv + \Bv \cdot \nabla \boldsymbol{U}  + \nabla P = \BF,\quad \text{in}\,\, \Omega\\
		& {\rm div}~\Bv = 0,\quad \text{in}\,\, \Omega\\
		&	\Bv=0~~\text{on}~\partial\Omega,~~~~\ \ \ \ \ \int_{-1}^1v_1(x,y)dy=0
	\end{aligned}\right.
\end{equation}
plays a crucial role.
Recently, it was proved in \cite{GGN} that a general class of symmetric shear flows  of the two-dimensional incompressible Navier-Stokes equations in a periodic strip are spectrally unstable when the Reynolds number is sufficiently large.
It seems hard to show that $0$ is not a spectrum of the problem \eqref{spectralpb}, which corresponds to the invertibility of linearized problem \eqref{model12}.
Recently, the stability or enhanced dissipations for plane shear flows in a strip under Navier slip boundary conditions  or in the whole plane was studied in \cite{Ding1, Ding2, Conti}.

The uniqueness and structural stability of flows in a periodic strip were studied in \cite{SWX} when  the period is not large.
In this paper, we consider the system \eqref{NS} in a strip $\Omega= \mathbb{T}_{2L\pi} \times[-1,1]$ where the period could be arbitrary  as long as the flux is large. Let $\Bv=\Bu-\boldsymbol{U}$ be  the perturbation around the Poiseuille flow. It  satisfies the following system
\begin{equation}\label{model11}
	\left\{\begin{aligned}
		&-\Delta \Bv+\boldsymbol{U}\cdot \nabla \Bv+\Bv\cdot \nabla \boldsymbol{U}+\nabla P=-\Bv\cdot\nabla\Bv+\BF,\\
		&\mathrm{div}~\Bv=0
	\end{aligned}\right.
\end{equation}
supplemented with the no-slip boundary conditions and the flux constraint
\begin{equation}\label{model11'}
	\Bv=0~~\text{on}~\partial\Omega,~~~~\ \ \ \ \ \int_{-1}^1v_1(x,y)dy=0.
\end{equation}
The crucial point for the analysis on the local uniqueness of  solutions for the problem \eqref{model11}-\eqref{model11'} is to study the associated linearized problem, i.e.,  the linear system
\begin{equation} \label{model12}
	\left\{ \begin{aligned}
		&- \Delta \Bv+\boldsymbol{U} \cdot \nabla \Bv + \Bv \cdot \nabla \boldsymbol{U}  + \nabla P = \BF, \\
		& {\rm div}~\Bv = 0,
	\end{aligned}\right.\ \ \ \mbox{in}\ \Omega,
\end{equation}
with the no-slip boundary conditions and the flux constraint \eqref{model11'}.

The first main result of this paper can be stated as follows.
\begin{theorem}\label{thm1}
	Let $\Omega=\mathbb{T}_{2L\pi} \times[-1,1]$ and assume that $\BF=\BF(x,y)\in L^2(\Omega)$. There exists a uniform constant $\tilde{C}$, such that for all  $\Phi\ge \tilde{C}(1+L)^{63}$, the linearized problem \eqref{model12} and \eqref{model11'} admits a unique solution $\Bv(x,y)\in H^2(\Omega)$ satisfying
	\begin{equation}
		\|\Bv\|_{H^\frac53(\Omega)}\le C\|\BF\|_{L^2(\Omega)}
		\label{estimate11}
	\end{equation}
	and
	\begin{equation}\nonumber
		\|\Bv\|_{H^2(\Omega)}\le C \Phi^{\frac14} \|\BF\|_{L^2(\Omega)},
	\end{equation}
	where $C$ is a uniform constant independent of $L$ and $\Phi$.
\end{theorem}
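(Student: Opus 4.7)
My plan is to reformulate \eqref{model12}--\eqref{model11'} as a fourth order problem for a stream function and then analyze it mode by mode via boundary layer analysis. Introduce $\psi$ so that $\Bv=(\partial_y\psi,-\partial_x\psi)$, take the curl of the momentum equation, and use $U=\tfrac34\Phi(1-y^2)$, $U''=-\tfrac32\Phi$ to get
\begin{equation}\nonumber
\Delta^2\psi - U\partial_x\Delta\psi - \tfrac{3}{2}\Phi\,\partial_x\psi = \curl\BF \quad \text{in }\Omega,
\end{equation}
with the no-slip and flux conditions translating into $\psi=\partial_y\psi=0$ on $y=\pm1$. Expanding $\psi(x,y)=\sum_n\psi_n(y)e^{i\alpha_n x}$ with $\alpha_n=n/L$, this becomes an Orr--Sommerfeld type family
\begin{equation}\nonumber
(\partial_y^2-\alpha_n^2)^2\psi_n - i\alpha_n U(\partial_y^2-\alpha_n^2)\psi_n - \tfrac{3i\alpha_n}{2}\Phi\,\psi_n = g_n,\qquad \psi_n(\pm1)=\psi_n'(\pm1)=0.
\end{equation}
The $n=0$ mode collapses, after the flux constraint is used, to a simple biharmonic ODE; the real work is uniform-in-$n$ estimates for $n\ne 0$.

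Since $U$ is even in $y$, I would split each $\psi_n$ into an even part $\psi_n^e$ and an odd part $\psi_n^o$ and bound them separately. The equation is a singular perturbation with small parameter $(\alpha_n\Phi)^{-1}$: in the interior the $\Delta^2$ term is negligible and the Rayleigh-type equation $U(\partial_y^2-\alpha_n^2)\psi_n+\tfrac32\Phi\psi_n\approx 0$ drives the solution, while a layer of thickness $\delta_n\sim(\alpha_n\Phi)^{-1/3}$ near each wall is required to restore the no-slip conditions. The scheme is to build an interior inviscid approximation and a boundary layer corrector, couple them, and close weighted $L^2$ energy estimates by testing the ODE against $\bar\psi_n$, against $(\partial_y^2-\alpha_n^2)\bar\psi_n$, and against suitable weighted combinations; the large coefficient $\tfrac32\Phi$ multiplying $\psi_n$ itself is what makes it possible to absorb the convection term uniformly. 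The odd-part analysis essentially extends the symmetric-flow argument of \cite{Rabier1}, while the even-part analysis is new and much more delicate, since the natural Hardy-type weight used in the odd case degenerates at $y=0$ and a different construction, adapted to even symmetry, must be devised.

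Once the mode-by-mode bounds are in place I would sum them and translate back to $\Bv$. The $H^{5/3}$ estimate \eqref{estimate11} is expected to come out uniform in $\Phi$ after optimally interpolating between the interior and boundary layer pieces, while the $H^2$ bound follows by applying standard Stokes regularity to \eqref{model12} with the convection term treated as a lower order source, the loss $\Phi^{1/4}$ recording the sharp gradients in the layer of thickness $\delta_n$. The main obstacle I anticipate is the uniform analysis of the even part at low frequencies $|\alpha_n|\sim 1/L$: there one has to trade smallness in $L^{-1}$ against smallness in $\Phi^{-1}$, and propagating this trade-off through all the weighted estimates is precisely what produces the explicit threshold $\Phi\gtrsim(1+L)^{63}$ and, correspondingly, an explicit relation between the admissible flux and the period that is the main new feature beyond \cite{Rabier1}.
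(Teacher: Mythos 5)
Your overall strategy coincides with the paper's: stream function formulation, Fourier decomposition in $x$, a separate elementary treatment of the $n=0$ mode, an even/odd splitting, boundary layer correctors of thickness $|\Phi\hat{n}|^{-1/3}$ built to restore no-slip, energy estimates obtained by testing against $\overline{\psi_n}$ and $(\partial_y^2-\hat{n}^2)\overline{\psi_n}$ and exploiting the large coefficient $\Phi$, interpolation for the uniform $H^{5/3}$ bound, and Stokes/elliptic regularity for the $H^2$ bound with the $\Phi^{1/4}$ loss. The even-mode difficulty at low frequencies and the $L$ versus $\Phi$ trade-off producing the $(1+L)^{63}$ threshold are also correctly identified as the crux.

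However, two specific choices in your sketch would fail or need replacement if carried out literally. First, your ``interior'' approximation is the inviscid Rayleigh equation $U(\partial_y^2-\alpha_n^2)\psi_n-U''\psi_n\approx 0$; for Poiseuille flow $U$ vanishes exactly at $y=\pm1$, so this operator degenerates precisely where the boundary layer sits, and solving it with quantitative bounds is not straightforward. The paper instead takes as its regular part the full viscous problem with Navier slip conditions $\psi_{n,s}(\pm1)=\psi_{n,s}''(\pm1)=0$, and the quantitative heart of the proof is the estimate of this slip problem: for odd data this is inherited from the earlier work, while for even data the closure comes from a sharpened Poincar\'e-type inequality for odd functions (proved by Legendre expansion) applied to $\psi_{n,s}'$, which is exactly where the $(1+L)$ powers enter; your sketch does not supply a mechanism playing this role. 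Second, the Airy-function layer construction (and in particular the lower bound on $(\psi^+_{n,BL})'(1)$ needed to solve for the matching coefficients) only works for $|\hat{n}|\le\epsilon_1\sqrt{\Phi}$; the paper handles the high-frequency regime $|n|\ge\epsilon_1 L\sqrt{\Phi}$ by a separate direct energy argument using $\hat{n}^2\gtrsim\Phi$ together with a Hardy--Littlewood--P\'olya type inequality, with no boundary layer at all, whereas your plan applies the singular-perturbation ansatz uniformly in $n$. Finally, existence and uniqueness are not automatic from a priori bounds: the paper constructs the solution explicitly from the decomposition and proves uniqueness via a Fredholm alternative argument (plus Galerkin at high frequencies), a step absent from your outline.
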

We have the following remark on Theorem \ref{thm1}.
\begin{remark}
	The key observation of this paper is that the large flux of the flows can provide some good estimates. Since the Reynolds number is proportional to the flux,  the flow is more stable as the Reynolds number becomes larger.
	Theorem \ref{thm1} asserts that if $\Phi\geq \tilde{C}(1+L)^{63 }$, then the linearized problem \eqref{model12} and \eqref{model11'} admits a unique solution. We hope to remove the lower bound, $\tilde{C}(1+L)^{63}$, for the flux in the future.
\end{remark}


Based on Theorem \ref{thm1}, we prove the well-posedness theory of the nonlinear problem.

\begin{theorem}\label{largeforce}
	Let $\Omega=\mathbb{T}_{2L\pi} \times[-1,1]$ and assume that $\BF=\BF(x,y)\in L^2(\Omega)$. For all $\Phi\ge \tilde{C}(1+L)^{63}$, if
	\begin{equation}\nonumber
		\|\BF\|_{L^2(\Omega)}\le \min\left\{1,\sqrt{\frac L2}\right\}\Phi^\frac{1}{32},
	\end{equation}
	then the steady Navier-Stokes system \eqref{NS} supplemented with the no-slip boundary condition and the flux constraint \eqref{BC} admits a unique  solution $\Bu(x,y)$ satisfying
	\begin{equation}\label{est1}
		\left\|\Bu-\BU\right\|_{H^\frac53(\Omega)}\le C \|\BF\|_{L^2(\Omega)},\ \ \ \ \ \|\Bu-\BU\|_{H^2(\Omega)}\le C \Phi^\frac14  \|\BF\|_{L^2(\Omega)},
	\end{equation}
	and
	\begin{equation}\nonumber
		\|u_2\|_{L^2(\Omega)} \leq \min\{1,L^\frac12\} \Phi^{-\frac{5}{12} }.
	\end{equation}
\end{theorem}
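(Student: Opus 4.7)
The plan is a Banach fixed-point argument built on the linearized solvability from Theorem \ref{thm1}. Setting $\Bv = \Bu - \BU$, I would define the solution map $\mathcal{T}$ sending $\Bv$ to the unique $\tilde{\Bv}$ solving the linearized problem \eqref{model12}-\eqref{model11'} with right-hand side $\BF - \Bv\cdot\nabla\Bv$. Theorem \ref{thm1} applies as soon as $\Phi \geq \tilde{C}(1+L)^{63}$ and delivers
\begin{equation}
\|\tilde{\Bv}\|_{H^{5/3}(\Omega)} \leq C\bigl(\|\BF\|_{L^2} + \|\Bv\cdot\nabla\Bv\|_{L^2}\bigr), \qquad \|\tilde{\Bv}\|_{H^{2}(\Omega)} \leq C\Phi^{1/4}\bigl(\|\BF\|_{L^2} + \|\Bv\cdot\nabla\Bv\|_{L^2}\bigr).
\end{equation}

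The central nonlinear estimate is the two-dimensional product inequality
\begin{equation}
\|\Bv\cdot\nabla\Bv\|_{L^2} \leq \|\Bv\|_{L^4}\|\nabla\Bv\|_{L^4} \leq C\|\Bv\|_{H^{3/2}}^{2} \leq C\|\Bv\|_{H^{5/3}}^{2},
\end{equation}
whose embedding constants are uniform in $L$ because the strip has fixed vertical width while the periodic horizontal direction only helps. With this, I would look for the fixed point in a ball $X_M = \{\Bv : \|\Bv\|_{H^{5/3}} \leq M\}$ of radius comparable to $\|\BF\|_{L^2}$, verify $\mathcal{T}(X_M) \subset X_M$, and then show $\mathcal{T}$ is a contraction by applying the same product bound to the difference $\Bv_1\cdot\nabla\Bv_1 - \Bv_2\cdot\nabla\Bv_2 = \Bv_1\cdot\nabla(\Bv_1-\Bv_2) + (\Bv_1 - \Bv_2)\cdot\nabla\Bv_2$. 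The fixed point is the desired solution; feeding it back into the second bound of Theorem \ref{thm1} yields the $H^2$ estimate in \eqref{est1}.

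For the $\|u_2\|_{L^2}$ bound I would use $u_2 = v_2$ (the Poiseuille flow has no vertical component) and exploit a sharper estimate on the vertical velocity that should be extractable from the boundary layer analysis underlying Theorem \ref{thm1}. Heuristically, the horizontal component carries the full $\|\BF\|_{L^2}$ mass while the vertical component inherits an inverse power of $\Phi$ from the high-Reynolds-number structure; combining such a refinement with $\|\BF\|_{L^2} \leq \min\{1,\sqrt{L/2}\}\Phi^{1/32}$ should reproduce the stated $\min\{1,L^{1/2}\}\Phi^{-5/12}$.

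The main obstacle I anticipate is that the smallness threshold $\|\BF\|_{L^2} \leq \min\{1,\sqrt{L/2}\}\Phi^{1/32}$ grows with $\Phi$, so a purely qualitative contraction argument with universal constants does not close. One must exploit the gap between the $H^{5/3}$ and $H^{2}$ bounds in Theorem \ref{thm1}, either through a weighted ball in which the nonlinearity picks up a favorable power of $\Phi^{-\alpha}$ with $\alpha > 1/32$, or by interpolating against the finer $v_2$ estimate mentioned above so that the dominant nonlinear contribution $v_2\partial_y \Bv$ acquires the necessary flux decay. Pinpointing the exponent $1/32$ and the accompanying $5/12$ for $v_2$ is the quantitative heart of the argument, and both are almost certainly fixed by the specific boundary layer scalings buried in the proof of Theorem \ref{thm1}.
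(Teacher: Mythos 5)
Your framework (fixed point on the map $\Bv \mapsto$ solution of the linearized problem with force $\BF - \Bv\cdot\nabla\Bv$) matches the paper's iteration scheme, but the step you yourself flag as the obstacle is exactly the one you leave unresolved, and it is not a minor refinement: with $\|\BF\|_{L^2}\le \Phi^{1/32}$ and a ball of radius $M\sim\|\BF\|_{L^2}$ in $H^{5/3}$, your bound $\|\Bv\cdot\nabla\Bv\|_{L^2}\le C\|\Bv\|_{H^{5/3}}^2\sim \Phi^{1/16}$ exceeds $\|\BF\|_{L^2}$, so the self-map property fails and no choice of "universal constant" ball closes the argument. The paper's mechanism is concrete and different from what you sketch: the invariant set $\mathcal{J}$ in the proof of Proposition \ref{largeF} carries \emph{three} constraints simultaneously, namely $v_{2,0}=0$, $\|\Q\Bv\|_{L^2(\Omega)}\le \Phi^{-5/12}$, and $\bigl(\|\Bv_0\|_{H^2}^2+\|\Q\Bv\|_{H^{5/3}}^2\bigr)^{1/2}\le 2C_4\|\BF\|_{L^2}$. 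The small $L^2$ bound on the nonzero modes is not "extractable from the boundary layer analysis" as a sharper $v_2$ estimate inside Theorem \ref{thm1}; it comes from the refined linear estimates that are stated only in Propositions \ref{mediumv} and \ref{highv}, i.e. $\|\Bv_n\|_{L^2}\le C\Phi^{-10/21}\|\BF_n\|_{L^2}$ for intermediate frequencies and $\|\Bv_n\|_{L^2}\le C\Phi^{-1}\|\BF_n\|_{L^2}$ for high frequencies, and it propagates through the iteration because the nonlinear forcing is estimated mode by mode (Hausdorff--Young) with the $\Q\Bv$ factor interpolated as $\|\Q\Bv\|_{L^6}\lesssim\|\Q\Bv\|_{L^2}^{2/5}\|\Q\Bv\|_{H^{5/3}}^{3/5}$, which converts the $\Phi^{-5/12}$ constraint into the gain $\Phi^{-1/6}\|\BF\|_{L^2}^{8/5}\le \Phi^{-71/480}\|\BF\|_{L^2}$ that beats the large force. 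The structural fact $v_{2,0}=0$ (forced by incompressibility, the no-slip condition and zero flux) is also essential: it both removes the dangerous zero-mode products and is what turns the nonzero-mode $L^2$ bound into the stated estimate $\|u_2\|_{L^2}\le \Phi^{-5/12}$, since then $\|v_2\|_{L^2}=\|\Q v_2\|_{L^2}$. Without naming this $L^2$ decay of the nonzero-mode solution operator and the set in which it is propagated, your plan does not close.

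A second, smaller gap concerns the dependence on $L$. You assert the Sobolev and product constants are uniform in $L$ because "the periodic horizontal direction only helps," but for $0<L<1$ this is not automatic, and it is precisely where the factor $\min\{1,\sqrt{L/2}\}$ in the hypothesis comes from: the paper proves the result first for $L\ge 1$ (where uniformity of the elliptic and embedding constants is argued by a covering/unfolding as in Remark \ref{uniformL-1}), and for $0<L<1$ it passes to the extended domain $\tilde\Omega=\mathbb{T}_{2\pi ML}\times[-1,1]$ with $1<ML<2$, applies the $L\ge1$ result there, and translates back; the condition $\|\BF\|_{L^2(\Omega)}\le\sqrt{L/2}\,\Phi^{1/32}$ is exactly what guarantees $\|\BF\|_{L^2(\tilde\Omega)}\le\Phi^{1/32}$, and the factor $\min\{1,L^{1/2}\}$ in the $u_2$ bound arises from the same rescaling of norms. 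Your proposal neither explains this factor nor would it produce it. Finally, note that uniqueness in the paper is proved in the class of solutions satisfying the $\mathcal{J}$-type bounds (including $\|\Q\Bv\|_{L^2}\le\Phi^{-5/12}$), again using the split between the $H^2$ bound on the zero mode and the $H^{3/2}$ interpolation on the nonzero modes; a contraction purely in an $H^{5/3}$ ball of radius $\sim\|\BF\|_{L^2}$ runs into the same loss of smallness as the existence step.
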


\begin{remark}
	Theorem \ref{largeforce} asserts that there exists a unique large solution in a suitably large set  of functions even when the external force is large.	
\end{remark}

In fact, we have the following further result on the uniqueness of  solutions for the Navier-Stokes system in a periodic strip.
\begin{theorem}\label{uniqueness}
	Let $\Omega=\mathbb{T}_{2L\pi} \times[-1,1]$ and assume that $\BF=0$. For all $\Phi\ge \tilde{C}(1+L)^{63}$, if $\Bu$ is a  solution of the problem \eqref{NS}-\eqref{BC} satisfying
	\begin{equation}\label{est6-1}
		\|u_2\|_{H^1(\Omega)}\le \min\left\{1,\sqrt{\frac L2}\right\}\Phi^{\frac{1}{90}},
	\end{equation}
	then $\Bu\equiv\BU$, where $\BU=U(y)\Be_1$ is the Poiseuille flow with $U(y)$ defined in \eqref{Poiseuille}.
	
\end{theorem}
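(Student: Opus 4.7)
The plan is to set $\Bv = \Bu - \BU$ and use Theorem \ref{thm1} together with a nonlinear estimate that exploits the hypothesis $\|v_2\|_{H^1} = \|u_2\|_{H^1} \leq \min\{1,\sqrt{L/2}\}\Phi^{1/90}$ to force $\Bv\equiv 0$. The first step is to put the nonlinear term in Lamb form: with the scalar vorticity $\omega = \partial_x v_2 - \partial_y v_1$, one has $\Bv\cdot\nabla\Bv = \nabla(|\Bv|^2/2) + (-v_2\omega, v_1\omega)$, so absorbing the gradient into the pressure rewrites \eqref{model11}--\eqref{model11'} as the linear system \eqref{model12}--\eqref{model11'} with right-hand side $\BF = (v_2\omega,-v_1\omega)$. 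Theorem \ref{thm1} then gives $\|\Bv\|_{H^{5/3}}\leq C(\|v_1\omega\|_{L^2}+\|v_2\omega\|_{L^2})$ and $\|\Bv\|_{H^2}\leq C\Phi^{1/4}(\|v_1\omega\|_{L^2}+\|v_2\omega\|_{L^2})$.

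The second step is the basic a priori bound. Testing \eqref{model11} with $\Bv$ and using divergence-free integration by parts to annihilate both the transport term $\int\BU\cdot\nabla\Bv\cdot\Bv$ and the nonlinear trilinear term $\int(\Bv\cdot\nabla\Bv)\cdot\Bv$ leaves $\|\nabla\Bv\|_{L^2}^2=-\int_\Omega U'(y)v_1 v_2\,dxdy$. Using $|U'|\leq 3\Phi/2$, Cauchy--Schwarz, and the Poincar\'e inequality (applicable since $v_1,v_2$ vanish on $\partial\Omega$), one obtains the crucial bound $\|\Bv\|_{H^1}\leq C\Phi\|v_2\|_{L^2}$; in particular $\|\omega\|_{L^2}\leq C\Phi\|v_2\|_{L^2}$. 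Thus the whole perturbation is controlled in $H^1$ by the single component $v_2$, at the cost of a single factor of $\Phi$.

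The third and most delicate step is bounding the Lamb-form right-hand side. For $\|v_2\omega\|_{L^2}$, the two-dimensional Gagliardo--Nirenberg estimates $\|v_2\|_{L^4}\leq C\|v_2\|_{L^2}^{1/2}\|v_2\|_{H^1}^{1/2}$ and $\|\omega\|_{L^4}\leq C\|\omega\|_{L^2}^{1/2}\|\nabla\omega\|_{L^2}^{1/2}$, combined with the energy bound on $\omega$ and $\|\nabla\omega\|_{L^2}\leq C\|\Bv\|_{H^2}$, yield a bound in terms of $\Phi$, $\|v_2\|_{H^1}$ and $\|\Bv\|_{H^2}$. For $\|v_1\omega\|_{L^2}$, the difficulty is that $v_1$ is not directly small, so I would decompose $v_1 = \bar v_1(y)+\tilde v_1(x,y)$ into its $x$-average and fluctuation. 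On nonzero Fourier modes in $x$, the relation $\partial_x v_1 = -\partial_y v_2$ gives $\hat v_1(k,y)=(iL/k)\partial_y\hat v_2(k,y)$, so $\tilde v_1$ is controlled mode-by-mode in terms of $v_2$ (with a factor $L$). For the zero mode, $x$-averaging the first component of \eqref{model11} and using $\bar v_2\equiv 0$ reduces to the ODE $\partial_y^2\bar v_1 = \partial_y\overline{v_1 v_2}$, so integrating with $\bar v_1(\pm 1)=0$ yields $\|\partial_y\bar v_1\|_{L^2_y}\lesssim\|\overline{v_1 v_2}\|_{L^2_y}$, which is purely quadratic in $\Bv$ and already carries a factor of $v_2$.

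Plugging these nonlinear estimates back into the two bounds from Theorem \ref{thm1} produces a closed inequality of the form $\|\Bv\|_{H^{5/3}}\leq C\Phi^{\alpha}\|v_2\|_{H^1}^{\beta}\|\Bv\|_{H^{5/3}}^{\gamma}$ (with a companion for $\|\Bv\|_{H^2}$). The exponents should be balanced so that, with $\|v_2\|_{H^1}\leq\min\{1,\sqrt{L/2}\}\Phi^{1/90}$ and $\Phi\geq\tilde C(1+L)^{63}$, the prefactor is strictly less than $1$, forcing $\|\Bv\|_{H^{5/3}}=0$ and hence $\Bu\equiv\BU$. The main obstacle I expect is the control of $\|v_1\omega\|_{L^2}$: since $\bar v_1$ admits only quadratic control in $\Bv$, closing the estimate requires a careful bootstrap, and the specific exponent $1/90$ should emerge from balancing the $\Phi^{1/4}$ loss in the $H^2$ linearised bound, the $\Phi$ from the energy estimate, the $L$-factors from the Fourier decomposition and the ODE for $\bar v_1$, and the Gagliardo--Nirenberg interpolations.
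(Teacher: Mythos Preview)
Your approach has a genuine gap: applying only Theorem \ref{thm1} cannot close the argument, because that estimate carries \emph{no} negative power of $\Phi$. Since the hypothesis is $\|v_2\|_{H^1}\le \Phi^{1/90}$ (which \emph{grows} with $\Phi$), already the easiest piece of your right-hand side fails: for instance $\|v_2\omega\|_{L^2}\le C\|v_2\|_{L^6}\|\omega\|_{L^3}\le C\|v_2\|_{H^1}\|\Bv\|_{H^{5/3}}$ gives a prefactor $C\Phi^{1/90}\gg 1$, so the inequality $\|\Bv\|_{H^{5/3}}\le C\Phi^{1/90}\|\Bv\|_{H^{5/3}}+\cdots$ says nothing. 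Your energy identity $\|\omega\|_{L^2}\le C\Phi\|v_2\|_{L^2}$ is correct but introduces an additional \emph{loss} of a full power of $\Phi$, making the balance worse, not better; it is never used in the paper. The Lamb form is also the wrong rewriting: the term $v_1\omega$ hides, rather than exposes, the smallness coming from $v_2$, which is why you are forced into the awkward zero-mode/nonzero-mode splitting of $v_1$ and the ODE for $\bar v_1$.

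The paper's proof works because it uses two ingredients you omit. First, it does not use Theorem \ref{thm1} as a black box but the finer estimates behind it: on nonzero modes one has $\|\Q\Bv\|_{L^2}\le C\Phi^{-10/21}\|\Q\BF\|_{L^2}$ (Propositions \ref{mediumv} and \ref{highv}), so interpolating $H^{3/2}$ between $L^2$ and $H^{5/3}$ produces a \emph{gain} $\Phi^{1/30}$ on the left-hand side quantity $\|\Bv_0\|_{H^2}+\Phi^{1/30}\|\Q\Bv\|_{H^{3/2}}$. Second, the nonlinearity is rewritten via $\partial_x v_1=-\partial_y v_2$ (not Lamb form) as $\Bv\cdot\nabla\Bv=(-v_1\partial_y v_2+v_2\partial_y v_1,\ v_1\partial_x v_2+v_2\partial_y v_2)$, so that \emph{every} term carries an explicit $v_2$ or $\nabla v_2$ factor and one gets directly $\|\Q(\Bv\cdot\nabla\Bv)\|_{L^2}+\|(\Bv\cdot\nabla v_1)_0\|_{L^2}\le C\|v_2\|_{H^1}\bigl(\|\Bv_0\|_{H^2}+\|\Q\Bv\|_{H^{3/2}}\bigr)$. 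The exponent $1/90$ is then exactly what makes $\Phi^{1/90}\ll\Phi^{1/30}$ and $\Phi^{1/90-1/70}=\Phi^{-1/315}\to 0$ close the loop.
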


\begin{remark}
	The uniqueness obtained in Theorem \ref{uniqueness} does not require any assumption on $u_1$. Therefore, in order to prove the global uniqueness of Poiseuille flows, one needs only to remove the assumption on $u_2$ later on.
\end{remark}
\begin{remark}
	If we replace $u_2$ by $u_1-U(y)$ in \eqref{est6-1}, then the same conclusion in Theorem \ref{uniqueness} can also be proved.
\end{remark}

\begin{remark}
	An important observation in Theorems \ref{thm1}-\ref{uniqueness} is the constants $C$ and $\tilde{C}$ depend neither on the flux $\Phi$ nor on the period $L$.
\end{remark}

The rest of the paper is organized as follows. In Section \ref{Linear}, Fourier expansion and the stream function formulation are introduced for the nonlinear problem \eqref{model11}-\eqref{model11'}, and the corresponding linearized problem.  Section \ref{sec-res} is devoted to establishing the  uniform a priori estimates independent of the flux $\Phi$ for the linearized problem. Then the uniform nonlinear structural stability of Poiseuille flows is established in Section \ref{sec-nonlinear} with the help of a fixed point theorem. The well-posedness theory of the perturbed problem \eqref{model11}--\eqref{model11'} in the case with the large external force $\BF$ is also proved in Section \ref{sec-nonlinear}. The uniqueness of  solutions (Theorem \ref{uniqueness}) is proved in Section \ref{sec-unique}. Some important lemmas which are used here and there in the paper are collected in Appendix \ref{sec-app}.


\section{Stream function formulation and the linearized problem}\label{Linear}
Suppose that $\Bu=\Bv+\BU$ is a solution of the Navier-Stokes system \eqref{NS}-\eqref{BC} where $\BU=U(y)\Be_1$ is the Poiseuille flow. Then $\Bv$ satisfies the following nonlinear system
\begin{equation}\label{model21}
	\left\{\begin{aligned}
		&-\Delta v_1+U(y)\partial_xv_1+U'v_2+\partial_xP=-(v_1\partial_x+v_2\partial_y)v_1+F_1,\\
		&-\Delta v_2+U(y)\partial_xv_2+\partial_yP=-(v_1\partial_x+v_2\partial_y)v_2+F_2,\\
		&\partial_xv_1+\partial_yv_2=0
	\end{aligned}\right.
\end{equation}
supplemented with the boundary conditions and flux constraint
\begin{equation}\label{model21'}
	\begin{aligned}
		v_1(x,\pm1)=v_2(x,\pm1)=0,~~~~~\ \ \int_{-1}^1 v_1(x,y)\,dy=0, \ \ \ \ x\in [-L\pi, L\pi].
	\end{aligned}
\end{equation}

\subsection{Fourier expansion}
For ease of notation,  we denote $\hat{n}=\frac{n}{L}$  for any $n\in \mathbb{Z}$,  in the rest of the paper.
If the velocity field $\Bv$ is periodic with period $2 L \pi $ in $x$-direction, then it can be written as
\begin{equation}\nonumber
	\Bv=v_1(x,y)\Be_1 +v_2(x,y)\Be_2= \sum\limits_{n \in \mathbb{Z}} \Bv_n e^{i\hat{n}x} =  \sum\limits_{n\in\mathbb{Z}}v_{1,n}(y)e^{i\hat{n}x}\Be_1+v_{2,n}(y)e^{i\hat{n}x}\Be_2,
\end{equation}
where
\begin{equation}\nonumber\begin{aligned}
		v_{i, n}(y) &:=\frac{1}{2L\pi} \int_{-L\pi}^{L\pi} v_i(x,y) e^{-i\hat{n}x} \,dx \ \ \ \ \text{ for }i=1,2.
\end{aligned}\end{equation}
Similarly, the $n$-th  mode of $\BF$ is denoted by
\begin{equation}\nonumber
	\BF_n:= F_{1,n}(y)e^{i\hat{n}x}\Be_1+F_{2,n}(y)e^{i\hat{n}x}\Be_2.
\end{equation}
Since the velocity field $\Bv$ and the force $\BF$ are periodic, $\nabla P$ must be periodic. Hence, one can write
\begin{equation}\nonumber
	\partial_xP=\sum\limits_{n\in \Z}P_{1,n}(y)e^{i\hat{n}x}
	\quad \text{and}
	\quad
	\partial_yP=\sum\limits_{n\in \Z}P_{2,n}(y)e^{i\hat{n}x}.
\end{equation}
Clearly, $P_{1,n}$ and $P_{2,n}$ satisfy
\begin{equation}\nonumber
	i\hat{n}P_{2,n}=P_{1,n}'.
\end{equation}

\subsection{Stream function formulation}\label{sec-stream}
Define
\begin{equation}\nonumber
	\psi_n(y)=\left\{
	\begin{aligned}
		& \psi_0(-1)-\int_{-1}^y v_{1,0}(s)ds,\quad &\text{if}\,\, n= 0,\\
		& -i\frac{1}{\hat{n}}v_{2,n},\quad &\text{if}\,\, n\neq 0,
	\end{aligned}
	\right.
\end{equation}
where $\psi_0(-1)$ is to be determined.
Due to the divergence free property of $\Bv$, it holds that
\be \label{divergencefree}
i\hat{n} v_{1, {n}} + \frac{d}{dy} v_{2, n}=0  \ \ \ \ \text{for any} \ n\in \mathbb{Z}.
\ee
Hence the vorticities of $\Bv_n$ and $\BF_n$ can be written as
\be \label{defomegan}
\omega_n= i \hat{n} v_{2,n}-\frac{d}{d y}v_{1,n}=\left(\frac{d^2}{d y^2}- \hat{n}^2 \right)\psi_n \ \ \ \ \mbox{and}\ \ \ \ f_n= i\hat{n} F_{2,n}-\frac{d}{d y}F_{1,n},
\ee
respectively. Taking curl for the first two equations of \eqref{model21} and rewriting the resulting equation in terms of Fourier series give
\begin{equation}\nonumber
	-U''(y)v_{2,n} +i\hat{n} U(y)\omega_n -\left(\frac{d^2}{d y^2}-\hat{n}^2\right)\omega_n =f_n-\frac{d}{dy}\left(\sum\limits_{m\in \mathbb{Z}} v_{2,n-m}\omega_{m}\right)-i\hat{n}\sum\limits_{m\in \mathbb{Z}} v_{1,n-m}\omega_{m}.
\end{equation}
It follows from the boundary conditions \eqref{model21'} that $v_{1,n}$ and $v_{2,n}$ satisfy
\begin{equation}\label{BC2}
	v_{1,n}(\pm1)=v_{2,n}(\pm1)=0, \text{ and }\int_{-1}^1 v_{1,n}(y)\,dy=0 \quad \text{for }n\in \mathbb{Z}.
\end{equation}
Next, for $n\neq 0$, one has the following equation for $\psi_n$,
\begin{equation}\label{streamform}
	\begin{aligned}
		&-i\hat{n}U''(y)\psi_{n}+i\hat{n}U(y)\left(\frac{d^2}{d y^2}-\hat{n}^2\right)\psi_n-\left(\frac{d^2}{dy^2}-\hat{n}^2\right)^2\psi_n\\
		=&f_n-\frac{d}{dy}\left(\sum\limits_{m\in \mathbb{Z}} v_{2,n-m}\omega_{m}\right)
		-i\hat{n}\sum\limits_{m\in \mathbb{Z}} v_{1,n-m}\omega_{m}.
	\end{aligned}
\end{equation}
The boundary conditions \eqref{BC2} are equivalent to
\begin{equation*}\label{streamBC0}
	\psi_n(\pm1)=\psi_n'(\pm1)=0\quad \text{for }n\in \mathbb{Z}, \, n\neq0.
\end{equation*}
If $n=0$, the boundary conditions \eqref{BC2} together with the equation \eqref{divergencefree} imply that $v_{2,0}=0$ and
\begin{equation*}
	\psi_0(-1)=\psi_0(1), ~\psi_0'(\pm1)=0.
\end{equation*}
Hence $\psi_0$ also satisfies the equation \eqref{streamform}. Obviously, if $n=0$, the equation \eqref{streamform} depends only on the derivatives of $\psi_0$, but not on $\psi_0$ itself. Therefore, without loss of generality, we assume that $\psi_0(1)=\psi_0(-1)=0$ so that the boundary conditions for $\psi_n$ can be written as
\begin{equation}\label{streamBC1}
	\psi_n(\pm1)=\psi_n'(\pm1)=0\quad \text{for }n\in \mathbb{Z}.
\end{equation}

\subsection{Linearized problem} To get the well-posedness of the nonlinear problem \eqref{model21}-\eqref{model21'}, we first study the following linearized problem \begin{equation}\label{model22}
	\left\{\begin{aligned}
		&-\Delta v_1+U \partial_xv_1+U'v_2+\partial_xP=F_1,\\
		&-\Delta v_2+U\partial_xv_2+\partial_yP=F_2,\\
		&\partial_xv_1+\partial_yv_2=0,
	\end{aligned}\right.
\end{equation}
supplemented with the boundary conditions and flux constraint \eqref{model21'}.
Similarly, one can introduce the Fourier expansion and the stream function $\psi_n$ for the solution $\Bv$ of the linearized problem \eqref{model22} and \eqref{model21'}. Then $\psi_n$ satisfies the following fourth order equation
\begin{equation}\label{stream}
	-i\hat{n}U''(y)\psi_{n}+i\hat{n}U(y)\left(\frac{d^2}{d y^2}-\hat{n}^2\right)\psi_n-\left(\frac{d^2}{dy^2}-\hat{n}^2\right)^2\psi_n=f_n.
\end{equation}
Furthermore, the boundary conditions for $\psi_n$ are of the form
\begin{equation}\label{streamBC}
	\psi_n(\pm1)=\psi_n'(\pm1)=0\quad \text{for }n\in \mathbb{Z}.
\end{equation}

\section{Analysis on the linearized problem}\label{sec-res}
The goal of this section is to establish a priori estimates for the linearized problem \eqref{stream} and \eqref{streamBC}. The analysis is divided into three different cases based on the magnitude of the frequency. First, let us recall a priori estimate for the $0$-th mode, which is obtained in \cite{SWX}.
\begin{pro}\label{0mode}\cite[Proposition 3.1.]{SWX}
	For $n=0$, the problem \eqref{stream} and \eqref{streamBC} has a unique solution $\psi_0$ of the form
	\begin{equation}\nonumber
		\psi_{0}(y)=\int_{-1}^y\int_{-1}^\tau\int_{-1}^tF_{1,0}(s)\,dsdtd\tau+\frac{A_1}{6}(y+1)^3+\frac{A_2}{2}(y+1)^2,
	\end{equation}
	where
	\begin{equation}\nonumber
		A_1=\frac32\int_{-1}^1\int_{-1}^\tau\int_{-1}^tF_{1,0}(s)\,dsdtd\tau-\frac32\int_{-1}^1\int_{-1}^tF_{1,0}(s)\,dsdt
	\end{equation}
	and
	\begin{equation}\nonumber
		A_2=\int_{-1}^1\int_{-1}^tF_{1,0}(s)\,dsdt-\frac32\int_{-1}^1\int_{-1}^\tau\int_{-1}^tF_{1,0}(s)\,dsdtd\tau.
	\end{equation}
	Moreover, the corresponding velocity field $\Bv_0=-\psi_0'\Be_1$ satisfies
	\begin{equation}\nonumber
		\|\Bv_0\|_{H^2(\Omega)}\le C_1\|\BF_{0}\|_{L^2(\Omega)},
	\end{equation}
	where $C_1>0$ is a uniform constant independent of flux $\Phi$, $F_{1,0}$, and $L$.
\end{pro}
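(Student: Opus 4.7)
The plan is to solve the $n=0$ case by direct integration, since the spectral structure collapses dramatically. First I would observe that when $n=0$, the divergence-free equation \eqref{divergencefree} reads $\frac{d}{dy}v_{2,0}=0$, and combined with the boundary condition $v_{2,0}(\pm 1)=0$ in \eqref{BC2}, this forces $v_{2,0}\equiv 0$. Consequently, in the stream function equation \eqref{stream} with $n=0$, the convection and vortex-stretching terms disappear (every factor of $\hat{n}$ vanishes, and $v_{2,0}=0$), leaving simply the biharmonic ODE
\begin{equation*}
-\psi_0^{(4)}(y) = f_0(y) = -F_{1,0}'(y),
\end{equation*}
together with the four boundary conditions $\psi_0(\pm 1)=\psi_0'(\pm 1)=0$ from \eqref{streamBC}.

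Next I would integrate the equation $\psi_0^{(4)}=F_{1,0}'$ four times starting from $y=-1$. The two conditions $\psi_0(-1)=\psi_0'(-1)=0$ kill the two lower-order constants of integration, which automatically yields the expression
\begin{equation*}
\psi_0(y)=\int_{-1}^y\int_{-1}^\tau\int_{-1}^t F_{1,0}(s)\,ds\,dt\,d\tau+\frac{A_1}{6}(y+1)^3+\frac{A_2}{2}(y+1)^2
\end{equation*}
with $A_1,A_2$ still free. I would then impose $\psi_0(1)=0$ and $\psi_0'(1)=0$ to get a $2\times 2$ linear system in $(A_1,A_2)$, whose determinant is a nonzero numerical constant; solving this system gives the stated formulas for $A_1$ and $A_2$. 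Uniqueness follows because the associated homogeneous problem $\psi_0^{(4)}=0$ with $\psi_0(\pm 1)=\psi_0'(\pm 1)=0$ has only the trivial solution (cubic polynomials satisfying four independent boundary conditions must vanish identically).

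For the $H^2$ bound, I would first estimate the constants via Cauchy--Schwarz: each iterated integral appearing in the formulas for $A_1,A_2$ is bounded by a numerical multiple of $\|F_{1,0}\|_{L^2(-1,1)}$, hence $|A_1|+|A_2|\le C\|F_{1,0}\|_{L^2(-1,1)}$. Since $\Bv_0=-\psi_0'\Be_1$, differentiating the explicit formula gives $v_{1,0}''=-F_{1,0}-A_1$ up to lower-order polynomial terms, so $\|v_{1,0}\|_{H^2(-1,1)}\le C\|F_{1,0}\|_{L^2(-1,1)}$. Finally, because everything is $x$-independent, $\|\Bv_0\|_{H^2(\Omega)}^2=2L\pi\,\|v_{1,0}\|_{H^2(-1,1)}^2$ and $\|\BF_0\|_{L^2(\Omega)}^2=2L\pi\,\|F_{1,0}\|_{L^2(-1,1)}^2$, so the factor $\sqrt{2L\pi}$ cancels and produces a constant $C_1$ independent of $L$ and $\Phi$ (independence of $\Phi$ is automatic, since the Poiseuille profile $U$ never entered the $n=0$ equation). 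There is no real obstacle here; the only minor subtlety is tracking the cancellation of the $L$-dependent prefactors, which is what makes $C_1$ uniform.
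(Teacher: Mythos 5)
Your proof is correct and follows essentially the same route as the cited proof in [SWX]: for $n=0$ the equation \eqref{stream} degenerates to $\psi_0^{(4)}=F_{1,0}'$ (so the Poiseuille profile never enters), direct integration with $\psi_0(-1)=\psi_0'(-1)=0$ gives the stated formula, the conditions at $y=1$ determine $A_1,A_2$ exactly as in the statement, and uniqueness plus the $H^2$ bound with the $2L\pi$ factors cancelling are elementary. No gaps.
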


\subsection{Uniform estimate for the case with large flux and intermediate frequency}\label{secinter}
In this subsection, the uniform a priori estimates for the solution of \eqref{stream}-\eqref{streamBC} with respect to the flux $\Phi$ are established when the flux is large and the frequency is in the intermediate regime. Inspired by \cite{GHM, WX1}, the solutions are decomposed into several parts. The first part is the solution of \eqref{stream} supplemented with the Navier slip boundary conditions and the second part is the associated boundary layer. The other parts are used to recover the equation and the no-slip boundary conditions.

\begin{pro}\label{mediumstream} Assume that  {$\BF_n \in L^2(\Omega)$}.  There exist two uniform constants $\tilde{C}$ and $\epsilon_1$, such that as long as $\Phi \ge \tilde{C}(1+L)^{25}$ and $1\leq |n|\leq \epsilon_1 L\sqrt{\Phi} $, the problem \eqref{stream}-\eqref{streamBC} has a unique solution $\psi_n \in H^3(-1, 1)$. The solution $\psi_n$ can be decomposed into five parts,
	\be \label{decompose}\ba
	\psi_n(y) = &\psi_{n,s}(y) +  b_n^o\left[\psi_{n,BL}^o(y) +\psi_{n,e}^o(y) \right]+b_n^e\left[\psi_{n,BL}^e(y) +\psi_{n,e}^e(y) \right] \\
	& + a_n^o[\psi_{n,p}^o(y)+\psi_{n,r}^o(y)]+a_n^e[\psi_{n,p}^e(y)+\psi_{n,r}^e(y)].
	\ea\ee
	Here $(1)$\ $\psi_{n,s}$ is a solution to the following problem with slip boundary conditions
	\be \label{slip}
	\left\{
	\ba
	&-i\hat{n}U''(y)\psi_{n,s}+ i\hat{n}U(y)\left(\frac{d^2}{d y^2}-\hat{n}^2\right)\psi_{n,s}-\left(\frac{d^2}{d y^2}-\hat{n}^2\right)^2\psi_{n,s}=f_n,\\
	&\psi_{n,s}(\pm1)=\psi_{n,s}''(\pm1)=0.
	\ea\right.\ee
	Moreover, $\psi_{n,s}$ satisfies
	\be \label{est5-3}
	\inte  | \psi_{n,s}' |^2
	+\hat{n}^2| \psi_{n,s} |^2 \, dy \leq C(1+L)^2 |\Phi\hat{n}|^{-1} \inte |\BF_n|^2 \, dy,
	\ee
	\be \label{est5-4}
	\inte |\psi_{n,s}''|^2  +  \hat{n}^2|\psi_{n,s}' |^2  + \hat{n}^4|\psi_{n,s}|^2\, dy
	\leq  C(1+L)^\frac43 |\Phi\hat{n}|^{-\frac23}  \inte |\BF_n|^2 \, dy,
	\ee
	and
	\be \label{est5-5}
	\inte \left|  \psi_{n,s}^{(3)}\right|^2 +
	\hat{n}^2|\psi_{n,s}'' |^2  + \hat{n}^4|\psi_{n,s}'|^2 +\hat{n}^6|\psi_{n,s} |^2 \, dy
	\leq  C\inte |\BF_n|^2 \, dy.
	\ee

	$(2)$ $\psi_{n,BL}^e$ and $\psi_{n,BL}^o$ are the boundary layer profiles
	\be \nonumber
	\psi_{n,BL}^e=\chi^+(y)\psi_{n,BL}^+(y)+\chi^-(y)\psi_{n,BL}^-(y)
	\ \
	\text{and}
	\ \
	\psi_{n,BL}^o=\chi^+(y)\psi_{n,BL}^+(y)-\chi^-(y)\psi_{n,BL}^-(y).
	\ee
	Here
	\be \nonumber
	\psi_{n,BL}^\pm(y)=C_{0,n,\Phi}G_{n,\Phi}(\beta(1\mp y))
	\ee
	with constants  $C_{0, n,\Phi}$ and
	\begin{equation}\label{defbeta}
		\beta=\left|\frac{3\Phi\hat{n}}{2}\right|^\frac13,
	\end{equation}
	and
	$G_{n,\Phi}(\rho)$ is a smooth function, which decays exponentially at infinity and is uniformly bounded in the set
	\be\nonumber
	\mathcal{E}=\{(n,\Phi,\rho):\Phi\ge 1,~1\le |n|\leq L\sqrt{\Phi},~0\le \rho<\infty\}.
	\ee
	$\chi^+(y)=\chi^-(-y)$ are smooth cut-off functions on $[-1,1]$ satisfying that
	\be \label{5-3-5}
	\chi^+(y) = \left\{ \ba  &  1, \ \ \ \ y\geq \frac12,  \\ & 0, \ \ \  \ y\leq \frac14.   \ea  \right.
	\ee
	$\psi_{n,p}^e$ and $\psi_{n,p}^o$ are irrotational flows defined by
	\be\nonumber
	\psi_{n,p}^e=e^{\hat{n}y}+e^{-\hat{n}y}
	\quad
	\text{and}
	\quad
	\psi_{n,p}^o=e^{\hat{n}y}-e^{-\hat{n}y},
	\ee
	respectively.
	The coefficients $a_n^e$, $a_n^o$, $b_n^e$, and $b_n^o$ satisfy
	\be \nonumber
	|b_n^e|+|b_n^o|\leq C(1+L)^\frac56|\Phi\hat{n}|^{-\frac34}\left(\inte |\BF_n|^2\,dy\right)^\frac12
	\ee
	and
	\be\nonumber
	|a_n^e|+|a_n^o|\leq C(1+L)^\frac56|\Phi\hat{n}|^{-\frac34}e^{-|\hat{n}|}\left(\inte |\BF_n|^2\,dy\right)^\frac12.
	\ee

	$(3)$ $\psi_{n,BL}^e+\psi_{n,e}^e$, $\psi_{n,BL}^o+\psi_{n,e}^o$, $\psi_{n,p}^e+\psi_{n,r}^e$, and  $\psi_{n,p}^o+\psi_{n,r}^o$ satisfy the equation \eqref{stream} with $f_n=0$ supplemented with the boundary conditions
	\be \nonumber
	\psi_{n,e}^e(\pm1)=(\psi_{n,e}^e)''(\pm1)=\psi_{n,r}^e(\pm1)=(\psi_{n,r}^e)''(\pm1)=0
	\ee
	and
	\be \nonumber
	\psi_{n,e}^o(\pm1)=(\psi_{n,e}^o)''(\pm1)=\psi_{n,r}^o(\pm1)=(\psi_{n,r}^o)''(\pm1)=0.
	\ee
	
	In conclusion, $\psi_n$ satisfies
	\be \label{est5-6}
	\inte | \psi_n'|^2 + \hat{n}^2|\psi_n|^2 \,dy
	\leq  C(1+L)^2 |\Phi\hat{n}|^{-1}\inte |\BF_n|^2 \, dy
	\ee
	and
	\be \label{est5-7}
	\inte |\psi_n^{(3)}|^2 + \hat{n}^2|\psi_n''|^2 +\hat{n}^4| \psi_n' |^2 +\hat{n}^6|\psi_n|^2 \,dy
	\leq C(1+L)^{\frac53} |\Phi\hat{n}|^{\frac16} \inte |\BF_n|^2 \, dy.
	\ee
	Here $C$ is a uniform constant independent of $\Phi$ and $L$.
\end{pro}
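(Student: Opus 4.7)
The plan is to construct each ingredient of the decomposition \eqref{decompose} in turn, and then close by choosing $a_n^{e/o},b_n^{e/o}$ so that $\psi_n$ satisfies the full no-slip conditions. Because the slip piece $\psi_{n,s}$ already kills $\psi$ and $\psi''$ at $y=\pm 1$, the role of the remaining pieces is purely to build a wall correction for $\psi$ and $\psi'$, which is then split by parity in $y$ and handled separately for the even and odd modes.

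\emph{Stage 1: the slip problem.} Since $\psi_{n,s}=\psi_{n,s}''=0$ at the wall, the fourth-order bilinear form $\int |(\partial_y^2-\hat n^2)\psi|^2$ is coercive, so existence of $\psi_{n,s}$ follows from a Galerkin / Lax--Milgram argument and uniqueness will follow from the forthcoming a priori bounds. To produce the gain in negative powers of $|\Phi\hat n|$ in \eqref{est5-3}, I would test the equation against $\bar\psi_{n,s}$ and take imaginary parts: because $U''=-\frac{3\Phi}{2}$ is a nonzero constant, a positive term of weight $\frac{3}{2}|\Phi\hat n|\int|\psi_{n,s}|^2$ is produced, and the remaining advective contributions can be absorbed using $0\leq U\leq\frac{3\Phi}{4}$, Cauchy--Schwarz, and the $\hat n^2$-terms already present on the right. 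The real part of the same identity gives an $H^2$-type identity that yields \eqref{est5-4} after interpolation against \eqref{est5-3}; finally $\psi_{n,s}^{(4)}$ can be read off directly from the equation and \eqref{est5-5} follows. The powers of $(1+L)$ in the constants track Poincar\'e constants tied to the scale $1/|\hat n|=L/|n|$.

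\emph{Stage 2: boundary-layer and irrotational correctors.} Near $y=1$ I would use the rescaled variable $\rho=\beta(1-y)$ with $\beta=|3\Phi\hat n/2|^{1/3}$ and the linearization $U(y)\simeq U'(1)(y-1)$; the leading part of \eqref{stream} then reduces to an Airy-type ODE $G^{(4)}-i\rho G''=0$, and $G_{n,\Phi}$ is taken as its exponentially decaying solution, whose uniform bounds on $\mathcal{E}$ come from classical Airy asymptotics. Localizing by $\chi^\pm$ and forming even/odd combinations produces $\psi_{n,BL}^{e/o}$. These profiles only approximately solve \eqref{stream}, with residuals arising from the cut-off, from $U-U'(\pm1)(y\mp 1)$, and from the $\hat n^2$ terms; I would define $\psi_{n,e}^{e/o}$ as the solution of the slip problem \eqref{slip} with right-hand side equal to minus that residual, which is small in $L^2$ thanks to the exponential decay of $G_{n,\Phi}$, so that Stage 1 applies. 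Similarly, the harmonic pieces $\psi_{n,p}^{e/o}=e^{\hat n y}\pm e^{-\hat n y}$ exactly satisfy $(\partial_y^2-\hat n^2)^2\psi=0$ but generate a transport residual of size $|\hat n U''|\cdot|\psi_{n,p}^{e/o}|$, which I would correct by $\psi_{n,r}^{e/o}$ defined again through \eqref{slip} and controlled by Stage 1.

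\emph{Stage 3: matching the no-slip conditions, and main obstacle.} The coefficients $a_n^{e/o},b_n^{e/o}$ are determined by imposing $\psi_n(\pm 1)=\psi_n'(\pm 1)=0$. The parity split decouples this into two independent $2\times 2$ linear systems, whose leading matrix entries are the boundary values and derivatives of $\psi_{n,BL}^\pm$, of sizes $G_{n,\Phi}(0)$ and $\beta\,G_{n,\Phi}'(0)$, against those of $\psi_{n,p}^{e/o}$, of sizes $e^{\pm|\hat n|}$ and $|\hat n|e^{\pm|\hat n|}$; hence the leading determinants are of order $\beta\,e^{|\hat n|}$. Cramer's rule, combined with the Stage 2 control of $\psi_{n,e}^{e/o},\psi_{n,r}^{e/o}$ and the Stage 1 boundary traces of $\psi_{n,s}$, yields the stated bounds on $|a_n^{e/o}|,|b_n^{e/o}|$; summing the individual pieces gives \eqref{est5-6} and \eqref{est5-7}. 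The main obstacle is keeping these $2\times 2$ systems uniformly invertible across the whole intermediate window $1\leq|n|\leq\epsilon_1 L\sqrt\Phi$: the upper bound on $|n|$ is what separates the boundary-layer scale $\beta^{-1}$ from the irrotational scale $|\hat n|^{-1}$ and makes the two families of correctors genuinely independent at the wall, while the threshold $\Phi\geq\tilde C(1+L)^{25}$ absorbs the Stage 2 residual errors into the leading matrix without destroying invertibility. A secondary delicate point is that the new even-stream-function analysis must be shown to yield bounds of the same quality as the odd one, which is why the $e/o$ split is carried systematically throughout.
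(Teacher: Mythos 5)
Your Stage 1 is where the argument breaks down, and it is precisely the point the proposition is about. Testing \eqref{slip} with $\overline{\psi_{n,s}}$ and taking imaginary parts does \emph{not} leave you with a favorable term $\tfrac32|\Phi\hat n|\int|\psi_{n,s}|^2$ to be exploited: half of the $U''$ contribution is cancelled by the $U'$ cross term after integration by parts, and what actually comes out is the identity \eqref{5-3-8}, in which the good terms carry the degenerate weight $(1-y^2)$ while the term $\tfrac{3\Phi\hat n}{4}\int|\psi_{n,s}|^2\,dy$ sits on the adverse side \emph{unweighted}. Since the weight vanishes at $y=\pm1$, Cauchy--Schwarz and the $O(\hat n^2)$ viscous terms cannot absorb an $O(\Phi|\hat n|)$ unweighted term in the regime $|\hat n|\le\epsilon_1\sqrt\Phi$ (at the low end $\hat n\sim 1/L$ the viscous terms are negligible compared with $\Phi\hat n$). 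The paper's way out is to derive a second identity by testing with $\overline{\psi_{n,s}''}$ (leading to \eqref{5-3-43}) and then to absorb the adverse term $\tfrac{3\Phi\hat n}{2}\int|\psi_{n,s}'|^2$ into $\tfrac{3\Phi\hat n}{4}\int(1-y^2)|\psi_{n,s}''|^2$ by a weighted Poincar\'e inequality whose constant must be strictly better than $\tfrac12$; for even $\psi_{n,s}$ (so $\psi_{n,s}'$ odd) this is exactly Lemma \ref{lemmaA7}, proved via Legendre expansion, and the margin $\delta_1\sim L^{-2}$ there is the source of the $(1+L)^2$ factor in \eqref{est5-3}. Without this sharp-constant absorption (and the companion Lemma \ref{lemmaHLP} plus the weighted interpolation Lemma \ref{weightinequality} used to pass to \eqref{est5-4}), the estimates \eqref{est5-3}--\eqref{est5-5} and the entire $(1+L)$ bookkeeping do not follow; your proposal never identifies this mechanism, nor the even/odd distinction at the level of the slip problem, which is the new content of the even-stream-function analysis.

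Two further points. First, you never address uniqueness of $\psi_n$ for the no-slip problem \eqref{stream}--\eqref{streamBC}: the a priori bounds you invoke are for the slip problem, and for no-slip data the energy identities above pick up nonvanishing boundary terms (that is why the boundary layer is needed in the first place), so uniqueness does not come for free from the construction. The paper handles it by a Fredholm-alternative argument for the shifted operator $\mathcal{L}_\sigma=\mathcal{L}-\sigma I$ with $\sigma=\tfrac{9}{16}\Phi^2$, using coercivity of the shifted form, compactness of $\mathcal{L}_\sigma^{-1}$, and the already-established solvability for arbitrary $H^{-1}$ data. Second, two quantitative inaccuracies in Stages 2--3: the boundary-layer residual $Q_{n,BL}$ is \emph{not} small in $L^2$ (its dominant part $i\hat n U''\psi_{n,BL}$ has $L^2$ norm of order $\beta^{5/2}\sim|\Phi\hat n|^{5/6}$); smallness of $\psi_{n,e}$ in the final answer enters only through the negative powers of $|\Phi\hat n|$ gained by the slip estimates and the smallness of $b_n$. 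And the uniform invertibility of your $2\times2$ matching system rests on the lower bound $|(\psi_{n,BL}^+)'(1)|\ge\kappa\beta$, which requires the nondegeneracy of the Airy contour integral in Lemma \ref{airy-est}(2) together with the restriction $|\hat n|/\beta\le\min\{\epsilon,\tilde C_0/48\}$ (this is where $\epsilon_1$ comes from), not merely classical Airy asymptotics.
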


The rest of this subsection devotes to the proof of Proposition \ref{mediumstream}. First, when $f_n$ is odd,  one recalls the a priori estimates obtained in \cite{SWX} for the problem \eqref{slip}.
\begin{lemma}\label{slip-est-o}\cite[Lemma 3.4.]{SWX}
	Assume that {$\BF_n \in L^2(\Omega)$} and  $f_n$ is odd. The problem \eqref{slip} has a unique solution $\psi_{n, s} \in H^3(-1, 1)$, which is also odd and satisfies the estimates
	\be \label{slip-est-o-4}
	\inte  | \psi_{n,s}' |^2
	+\hat{n}^2| \psi_{n,s} |^2 \, dy \leq C|\Phi\hat{n}|^{-\frac43} \inte |\BF_n|^2 \, dy,
	\ee
	\be \label{slip-est-o-5}
	\inte |\psi_{n,s}''|^2  +  \hat{n}^2|\psi_{n,s}' |^2  + \hat{n}^4|\psi_{n,s}|^2 \, dy
	\leq  C|\Phi\hat{n}|^{-\frac23}  \inte |\BF_n|^2 \, dy,
	\ee
	\be \label{slip-est-o-6}
	\inte \left|  \psi_{n,s}^{(3)}\right|^2 +
	\hat{n}^2|\psi_{n,s}'' |^2  + \hat{n}^4|\psi_{n,s}'|^2 +\hat{n}^6|\psi_{n,s} |^2 \, dy
	\leq  C\inte |\BF_n|^2 \, dy,
	\ee
	and
	\begin{equation}\nonumber
		\left|\psi_{n,s}'(\pm1)\right|\le C|\Phi\hat{n}|^{-\frac12}\left(\inte |\BF_n|^2\,dy\right)^\frac12.
	\end{equation}
	Moreover, if $f_n \in L^2(-1, 1)$, it holds that
	\begin{equation}\label{slip-est-o-1}
		\inte |\psi_{n,s}'|^2 + \hat{n}^2|\psi_{n,s}|^2 \,dy \le C|\Phi\hat{n}|^{-\frac53}\inte |f_n|^2\,dy,
	\end{equation}
	\begin{equation}\label{slip-est-o-2}
		\int_{-1}^1 |\psi_{n,s}''|^2 +\hat{n}^2|\psi_{n,s}'|^2+   \hat{n}^4|\psi_{n,s}|^2 \,dy\leq C|\Phi\hat{n}|^{-\frac43}\int_{-1}^1 |f_n|^2\,dy,
	\end{equation}
	and
	\begin{equation}\label{slip-est-o-3}
		\inte \left|\psi_{n,s}^{(3)}\right|^2+\hat{n}^2|\psi_{n,s}''|^2+\hat{n}^4|\psi_{n,s}'|^2+\hat{n}^6|\psi_{n,s}|^2\,dy \le C|\Phi\hat{n}|^{-\frac23}\inte |f_n|^2\,dy.
	\end{equation}
	Here $C$ is a uniform constant.
	
\end{lemma}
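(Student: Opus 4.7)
My approach rests on the vorticity formulation of \eqref{slip}. Setting $\omega_{n,s}:=(\partial_y^2-\hat{n}^2)\psi_{n,s}$, the fourth-order equation becomes the second-order system
\[
-\omega_{n,s}''+\hat{n}^2\omega_{n,s}+i\hat{n}U\omega_{n,s}=f_n+i\hat{n}U''\psi_{n,s},\qquad \omega_{n,s}(\pm 1)=\psi_{n,s}(\pm 1)=0.
\]
The decisive structural feature is that $U''\equiv-\tfrac{3}{2}\Phi$ is constant, and both $U$ and $U''$ are even in $y$, so the linear operator preserves parity. Consequently the unique solution driven by odd $f_n$ must be odd, and I may restrict attention to the odd sector. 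Testing the equation against $\bar\psi_{n,s}$ and using the slip boundary conditions to integrate by parts twice yields
\[
\int_{-1}^1|\omega_{n,s}|^2\,dy-i\hat{n}\int_{-1}^1 U''|\psi_{n,s}|^2\,dy+i\hat{n}\int_{-1}^1 U\omega_{n,s}\bar\psi_{n,s}\,dy=-\int_{-1}^1 f_n\bar\psi_{n,s}\,dy.
\]
The imaginary part, combined with $U''=-\tfrac{3}{2}\Phi$, produces the pivotal term $\tfrac{3}{2}\Phi\hat{n}\|\psi_{n,s}\|_{L^2}^2$, which is the source of the $\Phi\hat{n}$ gain.

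To upgrade this to the sharp rate $(\Phi\hat{n})^{-4/3}$ I would also test against $\bar\omega_{n,s}$; the imaginary part then controls the weighted norm $\hat{n}\int_{-1}^1 U|\omega_{n,s}|^2\,dy$, while the real part bounds $\|\omega_{n,s}'\|^2+\hat{n}^2\|\omega_{n,s}\|^2$ by $\|f_n\|\,\|\omega_{n,s}\|+\hat{n}\Phi\|\psi_{n,s}\|\,\|\omega_{n,s}\|$. The guiding heuristic is the Airy rescaling: near $y=\pm 1$ the operator $-\partial_y^2+i\hat{n}U$ has natural boundary-layer width $(\Phi\hat{n})^{-1/3}$, leading to a weighted Poincar\'e-type inequality of the form $\int_{-1}^1(1-y^2)|\omega_{n,s}|^2\,dy\gtrsim(\Phi\hat{n})^{-1/3}\|\omega_{n,s}\|_{L^2}^2$ on the relevant functional class. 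Combining this with the two test identities and absorbing cross-terms by Young's inequality produces \eqref{slip-est-o-4}--\eqref{slip-est-o-6}.

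I expect the main obstacle to be the indefinite cross-term $\mathrm{Re}\!\int_{-1}^1 U\omega_{n,s}\bar\psi_{n,s}\,dy$; its sign has to be extracted through an extra integration by parts that reexpresses it as $-\int_{-1}^1 U\bigl(|\psi_{n,s}'|^2+\hat{n}^2|\psi_{n,s}|^2\bigr)\,dy$ plus lower-order boundary terms that vanish because $\psi_{n,s}(\pm 1)=0$. Together with the positivity $U\geq 0$ on $[-1,1]$ and the boundary-layer Poincar\'e inequality mentioned above, this converts the troublesome term into a coercive contribution plus controllable remainders.

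The $f_n$-versions \eqref{slip-est-o-1}--\eqref{slip-est-o-3} are one power of $(\Phi\hat{n})^{-1/3}$ sharper than the $\BF_n$-versions because for the latter I would integrate by parts $\int_{-1}^1 f_n\bar\psi_{n,s}\,dy=i\hat{n}\int_{-1}^1 F_{2,n}\bar\psi_{n,s}\,dy+\int_{-1}^1 F_{1,n}\bar\psi_{n,s}'\,dy$ to avoid differentiating $\BF_n$, at the cost of a factor $(\|\hat{n}\psi_{n,s}\|+\|\psi_{n,s}'\|)\|\BF_n\|$. The trace bound $|\psi_{n,s}'(\pm 1)|\lesssim(\Phi\hat{n})^{-1/2}\|\BF_n\|_{L^2}$ follows from $|\psi'(\pm 1)|^2\leq 2\|\psi'\|_{L^2}\|\psi''\|_{L^2}$ together with \eqref{slip-est-o-4} and \eqref{slip-est-o-5}. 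Higher regularity is recovered by bootstrapping the equation to express $\omega_{n,s}''$ in terms of quantities already controlled. Finally, existence and uniqueness of $\psi_{n,s}\in H^3(-1,1)$ follow from the Fredholm alternative applied to the biharmonic operator plus compact lower-order perturbations on the slip subspace of $H^2\cap H^1_0$, with injectivity supplied by the a priori estimate.
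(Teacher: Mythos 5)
There is a genuine gap, and it sits exactly at the heart of the estimate. Your two test identities (against $\overline{\psi_{n,s}}$ and against $\overline{\omega_{n,s}}$, essentially the same pairing the paper uses for the even case in \eqref{5-3-9}--\eqref{5-3-43}) are fine, but your handling of the cross term is wrong in a way that hides the main difficulty. Writing out $\Re\int_{-1}^1 U\,\omega_{n,s}\overline{\psi_{n,s}}\,dy$ by parts gives
\begin{equation*}
\Re\int_{-1}^1 U\,\omega_{n,s}\overline{\psi_{n,s}}\,dy=-\int_{-1}^1 U\bigl(|\psi_{n,s}'|^2+\hat{n}^2|\psi_{n,s}|^2\bigr)\,dy+\tfrac12\int_{-1}^1 U''|\psi_{n,s}|^2\,dy,
\end{equation*}
and the last term is $-\tfrac34\Phi\int_{-1}^1|\psi_{n,s}|^2\,dy$: an \emph{interior} term carrying the full factor $\Phi$, not a "lower-order boundary term that vanishes." After taking the imaginary part it combines with your "pivotal" term $\tfrac32\Phi\hat{n}\|\psi_{n,s}\|_{L^2}^2$ to leave $\tfrac34\Phi\hat{n}\int|\psi_{n,s}|^2\,dy$ on the \emph{bad} side of the identity, with the same prefactor as the coercive weighted term $\tfrac34\Phi\hat{n}\int(1-y^2)(|\psi_{n,s}'|^2+\hat{n}^2|\psi_{n,s}|^2)\,dy$ (this is precisely \eqref{5-3-8}, and the analogue with $\overline{\psi''}$ produces $\tfrac32\Phi\hat{n}\int|\psi_{n,s}'|^2\,dy$ as in \eqref{5-3-43}). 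Absorbing these terms is the crux of the lemma and requires Poincar\'e/Hardy-type inequalities with \emph{sharp numerical constants} exploiting the parity and the weight, i.e.\ Lemma \ref{lemmaHLP} and (in the even case) Lemma \ref{lemmaA7}; your sketch never engages with this, so the $\Phi\hat n$ gain is not actually obtained.

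The second gap is your "boundary-layer Poincar\'e inequality" $\int_{-1}^1(1-y^2)|\omega_{n,s}|^2\,dy\gtrsim(\Phi\hat{n})^{-1/3}\|\omega_{n,s}\|_{L^2}^2$. As a functional inequality this is false: it fails precisely for functions concentrated in layers of width $o\bigl((\Phi\hat n)^{-1/3}\bigr)$ near $y=\pm1$, and restricting to "the relevant functional class" is exactly what would need proof. In the actual argument its role is played by the weighted interpolation inequality of Lemma \ref{weightinequality}, $\int|g|^2\leq C\bigl(\int(1-y^2)|g|^2\bigr)^{2/3}\bigl(\int|g'|^2\bigr)^{1/3}+C\int(1-y^2)|g|^2$, applied with the third-order energy estimate \eqref{slip-est-o-6}; this is how the weighted bounds of size $|\Phi\hat n|^{-1}$ are upgraded to the unweighted rates $|\Phi\hat n|^{-4/3}$, $|\Phi\hat n|^{-2/3}$ in \eqref{slip-est-o-4}--\eqref{slip-est-o-5}, and similarly for \eqref{slip-est-o-1}--\eqref{slip-est-o-3}. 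Your parity reduction, the trace bound via $|\psi_{n,s}'(\pm1)|^2\lesssim\|\psi_{n,s}'\|_{L^2}\|\psi_{n,s}''\|_{L^2}$, the integration by parts distinguishing the $f_n$- and $\BF_n$-versions, and the existence/uniqueness argument (Fredholm in place of the Galerkin method used in \cite{SWX}) are all reasonable, but without the sharp-constant absorption and the weighted interpolation step the stated rates are not derived.
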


In the case that $f_n$ is even, we have the following results, which plays an important role in proving Proposition \ref{mediumstream}.
\begin{lemma}\label{slip-est-e}
	Assume that {$\BF_n\in L^2(\Omega)$}  and $f_n$ is even. The problem \eqref{slip} has a unique solution $\psi_{n, s}\in H^3(-1, 1)$, which is also even and satisfies the  estimates \eqref{est5-3}-\eqref{est5-5} and
	\begin{equation}\nonumber
		\left|\psi_{n,s}'(\pm1)\right|\le C(1+L)^\frac56|\Phi\hat{n}|^{-\frac{5}{12}}\left(\inte |\BF_n|^2\,dy\right)^\frac12.
	\end{equation}
	Moreover, if $f_n \in L^2(-1, 1)$,  $\psi_{n, s}$ satisfies the estimates
	\begin{equation}\label{slip-est-e-1}
		\inte |\psi_{n,s}'|^2 + \hat{n}^2|\psi_{n,s}|^2 \,dy \le C(1+L)^\frac{10}{3}|\Phi\hat{n}|^{-\frac53}\inte |f_n|^2\,dy,
	\end{equation}
	\begin{equation}\label{slip-est-e-2}
		\int_{-1}^1 |\psi_{n,s}''|^2 +\hat{n}^2|\psi_{n,s}'|^2+   \hat{n}^4|\psi_{n,s}|^2 \,dy\leq C(1+L)^\frac{8}{3}|\Phi\hat{n}|^{-\frac43}\int_{-1}^1 |f_n|^2\,dy,
	\end{equation}
	and
	\begin{equation}\label{slip-est-e-3}
		\inte \left|\psi_{n,s}^{(3)}\right|^2+\hat{n}^2|\psi_{n,s}''|^2+\hat{n}^4|\psi_{n,s}'|^2+\hat{n}^6|\psi_{n,s}|^2\,dy \le C(1+L)^\frac{4}{3}|\Phi\hat{n}|^{-\frac23}\inte |f_n|^2\,dy,
	\end{equation}
	where the constant $C$ is independent of $\Phi$ and $L$.
	
\end{lemma}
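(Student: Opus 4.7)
The plan is to parallel the proof of Lemma \ref{slip-est-o} for the odd case established in \cite{SWX} while introducing an additional reduction to compensate for the fact that an even stream function need not vanish at the midpoint $y=0$. First, since both the differential operator in \eqref{slip} and the Poiseuille profile $U(y)$ are invariant under $y\mapsto -y$, the problem restricts to the subspace of even $H^4$-functions satisfying the slip conditions; existence, uniqueness, and evenness of $\psi_{n,s}$ will then follow from the a priori bounds via a Galerkin/Fredholm argument on this subspace.

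The a priori estimates are obtained by testing \eqref{slip} against $\overline{\psi_{n,s}}$ and against $\overline{\omega_{n,s}}$ (with $\omega_{n,s}=\psi''_{n,s}-\hat{n}^2\psi_{n,s}$, which also vanishes at $y=\pm 1$), integrating over $(-1,1)$, and separating real and imaginary parts. This produces a coupled system that controls the $H^k$-norms of $\psi_{n,s}$ in terms of $\|f_n\|_{L^2}$ and the weighted advection norm $\hat{n}\int U |\omega_{n,s}|^2\,dy$. The slip boundary conditions and the fact that $U''=-\tfrac{3}{2}\Phi$ is a constant eliminate all boundary terms and simplify the cross-terms compared with a general Orr--Sommerfeld setting, reducing the estimates to questions about Hardy--Poincar\'e inequalities on the two half-intervals.

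The new difficulty relative to the odd case is that the Hardy-type bound $\int \psi^2/(1-y^2)\,dy \le C\int |\psi'|^2\,dy$ requires $\psi(0)=0$, which fails for even $\psi$. To bypass this, I would split
\begin{equation*}
\psi_{n,s} = \eta_{n,s} + \psi_{n,s}(0)\,\Xi,
\end{equation*}
where $\Xi$ is a fixed smooth even cut-off with $\Xi(0)=1$ and $\Xi(\pm 1)=\Xi''(\pm 1)=0$. Then $\eta_{n,s}(0)=0$, so the odd-type Hardy--Poincar\'e estimates available in the proof of Lemma \ref{slip-est-o} apply to $\eta_{n,s}$ on each half-interval, while the scalar $\psi_{n,s}(0)$ is recovered from the trace inequality combined with the $H^1$-bound; the explicit residual that $\Xi$ contributes to \eqref{slip} is absorbed on the left after a weight-counting argument. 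The $H^2$ and $H^3$ bounds \eqref{slip-est-e-2}--\eqref{slip-est-e-3} follow by iterating these identities at the level of $\omega_{n,s}$, the bounds \eqref{est5-3}--\eqref{est5-5} in terms of $\BF_n$ follow by one integration by parts on $f_n=i\hat{n} F_{2,n}-F'_{1,n}$ that gains a factor of $\hat{n}^{-1}$, and the pointwise bound on $\psi'_{n,s}(\pm 1)$ comes from the trace inequality applied to \eqref{slip-est-e-3}. The principal obstacle is to execute the absorption in the correct order so that exactly the powers $(1+L)^{10/3}$, $(1+L)^{8/3}$, $(1+L)^{4/3}$ accumulate and no larger: this forces a careful interpolation between the $H^1$ and $H^3$ estimates together with precise accounting of the Poincar\'e constants on $(-1,1)$ versus on $(0,1)$ and of the weights $\hat{n}$ and $|\Phi\hat{n}|^{1/3}$ arising from the critical-layer width of the Airy-type boundary layer.
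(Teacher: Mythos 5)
Your energy identities and the Galerkin/evenness reduction are consistent with the paper, but the step you single out as ``the new difficulty'' is not the actual obstruction, and the device you propose does not resolve the step that is genuinely critical. The Hardy inequality $\int_{-1}^1 |\psi|^2(1-y^2)^{-1}\,dy\le C\int_{-1}^1|\psi'|^2\,dy$ only needs $\psi(\pm1)=0$, not $\psi(0)=0$, so nothing is lost there by evenness. The real problem sits in the second (imaginary-part) identity, which after using $U''=-\tfrac32\Phi$ reads
\begin{equation*}
\frac{3\Phi\hat{n}}{4}\int_{-1}^1 \hat{n}^2|\psi_{n,s}'|^2(1-y^2)+|\psi_{n,s}''|^2(1-y^2)+\hat{n}^2|\psi_{n,s}|^2\,dy
=-\Im\int_{-1}^1 f_n\overline{\psi_{n,s}''}\,dy+\frac{3\Phi\hat{n}}{2}\int_{-1}^1|\psi_{n,s}'|^2\,dy,
\end{equation*}
i.e.\ \eqref{5-3-43}: one must absorb the \emph{unweighted} term $\tfrac{3\Phi\hat{n}}{2}\int|\psi_{n,s}'|^2$ into the weighted terms whose coefficient is exactly half as large. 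This is borderline: what is needed is an inequality for $g=\psi_{n,s}'$ of the form $\int|g|^2\le(\tfrac12-\delta_1)\int|g'|^2(1-y^2)+\delta\int|g|^2(1-y^2)$ with leading constant strictly below $\tfrac12$ and with $\delta$ as small as $O(L^{-2})$ so that the $\delta$-term can be hidden in $\tfrac12\hat{n}^2\int|g|^2(1-y^2)$ even when $|\hat n|$ is as small as $1/L$. A generic Poincar\'e or Hardy bound (e.g.\ Lemma \ref{lemmaHLP}, whose zeroth-order constant $92$ is far too large for small $\hat n$) cannot do this; the paper's proof hinges on Lemma \ref{lemmaA7}, a Legendre-expansion spectral-gap inequality valid precisely because $\psi_{n,s}'$ is \emph{odd} when $\psi_{n,s}$ is even (the constant mode, for which the inequality fails, is excluded), and the quantitative choice $\delta\sim L^{-2}$, $\delta_1\sim\delta$ is exactly what produces the $(1+L)^2$ loss that propagates into the exponents $\tfrac{10}{3},\tfrac83,\tfrac43$ in \eqref{slip-est-e-1}--\eqref{slip-est-e-3}. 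Your proposal contains no substitute for this sharp-constant step: a ``weight-counting argument'' with unspecified constants cannot beat an absorption in which the two coefficients differ by exactly the factor $2$.

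Moreover, the cut-off splitting $\psi_{n,s}=\eta_{n,s}+\psi_{n,s}(0)\,\Xi$ is both unnecessary (no midpoint condition is ever needed) and circular as you use it: you recover the scalar $\psi_{n,s}(0)$ ``from the trace inequality combined with the $H^1$-bound,'' but the $H^1$-quantity $\int|\psi_{n,s}'|^2$ is precisely the term whose control is at stake in the absorption above, so it is not available at that stage; and the residual $\mathcal{L}(\psi_{n,s}(0)\Xi)$ carries the large coefficient $\Phi\hat n$ through the transport term, so it cannot be absorbed for free either. To repair the argument you should drop the splitting, keep your two energy identities, and replace the absorption step by the refined inequality of Lemma \ref{lemmaA7} applied to the odd function $\psi_{n,s}'$ with $\delta=\min\{\tfrac{1}{4L^2},\tfrac54\}$, as in \eqref{5-3-43-new}--\eqref{5-3-10}; the remaining chain (weighted interpolation via Lemma \ref{weightinequality}, integration by parts on $f_n=i\hat nF_{2,n}-F_{1,n}'$ for \eqref{est5-3}--\eqref{est5-5}, and Lemma \ref{lemmaA2} for $|\psi_{n,s}'(\pm1)|$) then goes through essentially as you sketched.
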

\begin{remark}
	The estimates for odd and even solutions to the problem \eqref{slip} are slightly different. It seems that the estimates for odd solutions are better, where the estimate \eqref{slip-est-o-4} holds. In fact, the analysis for the two cases are also different. One of the key ideas here is making use of Lemma \ref{lemmaA7} to get the estimate \eqref{5-3-43-new}, since $\psi_{n, s}^\prime$ is odd when $\psi_{n, s}$ is even.
\end{remark}

\begin{proof}[Proof of Lemma \ref{slip-est-e}]
	First, we give some a  priori estimates. Assume that $\psi_{n, s}$ is a smooth even solution to the problem \eqref{slip}.
	Multiplying the equation \eqref{slip} by $\overline{\psi_{n,s}}$ and integrating the resulting equation over $[-1,1]$ yield
	\begin{equation}\label{5-3-9}
		\int_{-1}^1 \hat{n}^4|\psi_{n,s}|^2+2\hat{n}^2|\psi_{n,s}'|^2+|\psi''_{n,s}|^2\,d y=-\Re\int_{-1}^1 f_n\overline{\psi_{n,s}}\,dy+\Im\int_{-1}^1 \hat{n}U'\psi_{n,s}'\overline{\psi_{n,s}}\,dy
	\end{equation}
	and
	\begin{equation}\label{5-3-8}
		\frac{3\Phi\hat{n}}{4}\int_{-1}^1 \hat{n}^2|\psi_{n,s}|^2(1-y^2)+|\psi_{n,s}'|^2(1-y^2)\,d y=-\Im\int_{-1}^1 f_n\overline{\psi_{n,s}}\,dy+\frac{3\Phi\hat{n}}{4}\int_{-1}^1|\psi_{n,s}|^2\, dy.
	\end{equation}
	Similarly, multiplying \eqref{slip} by $\overline{\psi_{n,s}''}$ and integrating the resulting equation over $[-1,1]$ give
	\begin{equation}\label{5-3-36}
		\int_{-1}^1 \overline{\psi_{n,s}''}\left[-i\hat{n}  U''+ i\hat{n} U\left(\frac{d^2}{d y^2}-\hat{n}^2\right)-\left(\frac{d^2}{d y^2}-\hat{n}^2\right)^2\right]\psi_{n,s} \,d y=\int_{-1}^1 \overline{\psi_{n,s}''}f_n\,dy.
	\end{equation}
	It follows from integration by parts and the homogeneous boundary conditions that
	\begin{equation}\nonumber
		\int_{-1}^1-i\hat{n} U''\psi_{n,s}\overline{\psi_{n,s}''}\,dy=\int_{-1}^1i\hat{n} U''|\psi_{n,s}'|^2\,dy
	\end{equation}
	and
	\begin{equation}\nonumber
		\int_{-1}^1 i\hat{n} U \left(\frac{d^2}{d y^2}-\hat{n}^2\right)\psi_{n,s} \overline{\psi_{n,s}''}\,dy=\int_{-1}^1i\hat{n} U|\psi_{n,s}''|^2 +i\hat{n}^3 U|\psi_{n,s}'|^2+i\hat{n}^3 U'\psi_{n,s}\overline{\psi_{n,s}'}\,dy.
	\end{equation}
	Furthermore, one has
	\begin{equation}\nonumber
		\begin{aligned}
			\int_{-1}^1-\left(\frac{d^2}{d y^2}-\hat{n}^2\right)^2\psi_{n,s} \overline{\psi_{n,s}''}\,dy
			=&\int_{-1}^1-\hat{n}^4\psi_{n,s} \overline{\psi_{n,s}''} +2\hat{n}^2|\psi_{n,s}''|^2-\psi_{n,s}^{(4)} \overline{\psi_{n,s}''}\,dy\\
			=&\int_{-1}^1\hat{n}^4|\psi_{n,s}'|^2+
			2\hat{n}^2|\psi_{n,s}''|^2+\left|\psi^{(3)}_n\right|^2\,dy.
		\end{aligned}
	\end{equation}
	Then one can decompose \eqref{5-3-36} into its real and imaginary parts as
	\begin{equation}\label{5-3-40}
		\int_{-1}^1 \hat{n}^4|\psi_{n,s}'|^2+2\hat{n}^2|\psi_{n,s}''|^2+\left|\psi_{n,s}^{(3)}\right|^2\,d y=\Re\int_{-1}^1 f_n\overline{\psi_{n,s}''}\,dy+\Im\int_{-1}^1 \hat{n}^3 U'\psi_{n,s}\overline{\psi_{n,s}'}\,dy
	\end{equation}
	and
	\begin{equation}\label{5-3-41}
		\int_{-1}^1 \hat{n}U''|\psi_{n,s}'|^2+ \hat{n}U|\psi_{n,s}''|^2+ \hat{n}^3 U|\psi_{n,s}'|^2\,d y+\Re\int_{-1}^1 \hat{n}^3 U'\psi_{n,s}\overline{\psi_{n,s}'}\,dy=\Im\int_{-1}^1 f_n\overline{\psi_{n,s}''}\,dy,
	\end{equation}
	respectively. Note that
	\begin{equation}\nonumber
		\Re\int_{-1}^1 \hat{n}^3 U'\psi_{n,s}\overline{\psi_{n,s}'}\,dy=-\frac12\int_{-1}^1 \hat{n}^3 U''|\psi_{n,s}|^2\,dy.
	\end{equation}
	The equation \eqref{5-3-41} can be rewritten as follows,
	\begin{equation}\label{5-3-43}
		\begin{aligned}
			&\frac{3\Phi\hat{n}}{4}\int_{-1}^1 \hat{n}^2|\psi_{n,s}'|^2(1-y^2)+|\psi_{n,s}''|^2(1-y^2)+ \hat{n}^2 |\psi_{n,s}|^2\,d y\\
			=&-\Im\int_{-1}^1 f_n\overline{\psi_{n,s}''}\,dy+\frac{3\Phi\hat{n}}{2}\int_{-1}^1 |\psi_{n,s}'|^2\, dy.
	\end{aligned}\end{equation}
	According to Lemma \ref{lemmaA7}, for $\delta=\min\{\frac{1}{4L^2},\frac{5}{4}\}$, there exists a constant $\delta_1=\min\{\frac{1}{10}\delta, \frac{1}{3}-\frac16\delta\}=\frac{1}{10}\delta$ such that
	\begin{equation}\label{5-3-43-new}
		\begin{aligned}
			\int_{-1}^1 |\psi_{n,s}'|^2\,dy\leq & \left(\frac12 -\delta_1 \right) \int_{-1}^1 |\psi_{n,s}''|^2(1-y^2)\,dy+\delta \int_{-1}^1 |\psi_{n,s}'|^2(1-y^2)\,dy\\
			\leq&\left(\frac12 -\delta_1 \right) \int_{-1}^1 |\psi_{n,s}''|^2(1-y^2)\,dy+\frac{1}{4L^2} \int_{-1}^1 |\psi_{n,s}'|^2(1-y^2)\,dy,
		\end{aligned}
	\end{equation}
	since $\psi_{n,s}$ is even and $\psi_{n,s}'$ is odd. This, together with \eqref{5-3-43}, gives
	\begin{equation}\label{5-3-10}
		\begin{aligned}
			&\int_{-1}^1 \frac12 \hat{n}^2|\psi_{n,s}'|^2(1-y^2)+2\delta_1|\psi_{n,s}''|^2(1-y^2)+ \hat{n}^2 |\psi_{n,s}|^2\,dy\\
			\leq & \left| \frac{4}{3\Phi \hat{n}}\Im\int_{-1}^1 f_n\overline{\psi_{n,s}''}\,dy \right|.
		\end{aligned}
	\end{equation}
	Hence one has
	\begin{equation}\label{5-3-11}
		\begin{aligned}&\int_{-1}^1 \hat{n}^2|\psi_{n,s}'|^2(1-y^2)+|\psi_{n,s}''|^2(1-y^2)+ \hat{n}^2 |\psi_{n,s}|^2+|\psi_{n,s}'|^2\,dy \\
			\leq &C(1+\delta_1^{-1})|\Phi \hat{n}|^{-1}\left|\int_{-1}^1 f_n\overline{\psi_{n,s}''}\,dy\right|\\
			\leq &C(1+L)^2|\Phi \hat{n}|^{-1}\left|\int_{-1}^1 f_n\overline{\psi_{n,s}''}\,dy\right|.
		\end{aligned}
	\end{equation}
	Combining \eqref{5-3-11} and \eqref{5-3-8} yields
	\begin{equation}\label{5-3-12}
		\begin{aligned}
			&\int_{-1}^1 \hat{n}^4|\psi_{n,s}|^2(1-y^2)+ \hat{n}^2|\psi_{n,s}'|^2(1-y^2)+|\psi_{n,s}''|^2(1-y^2)+ \hat{n}^2 |\psi_{n,s}|^2+|\psi_{n,s}'|^2\,d y\\
			\leq &C|\Phi \hat{n}|^{-1}\left|\int_{-1}^1\hat{n}^2 f_n\overline{\psi_{n,s}}\,dy\right|+C(1+L)^2|\Phi \hat{n}|^{-1}\left|\int_{-1}^1f_n\overline{\psi_{n,s}''}\,dy\right|\\
			\leq& C(1+L)^2|\Phi \hat{n}|^{-1}\int_{-1}^1|f_n|(|\psi_{n,s}''|+\hat{n}^2|\psi_{n,s}|)\,dy.
		\end{aligned}
	\end{equation}
	Moreover, it follows from \eqref{5-3-40} and \eqref{5-3-9} that one has
	\begin{equation}\label{5-3-13}
		\int_{-1}^1 \hat{n}^6|\psi_{n,s}|^2+3\hat{n}^4|\psi_{n,s}'|^2+3\hat{n}^2|\psi''_{n,s}|^2+\left|\psi_{n,s}^{(3)}\right|^2\,dy=\Re\int_{-1}^1 f_n\overline{\psi_{n,s}''}\,dy-\Re\int_{-1}^1 \hat{n}^2 f_n\overline{\psi_{n,s}}\,dy.
	\end{equation}
	Thus it holds that
	\begin{equation}\label{5-3-46}
		\int_{-1}^1 \hat{n}^6|\psi_{n,s}|^2+3\hat{n}^4|\psi_{n,s}'|^2+3\hat{n}^2|\psi''_{n,s}|^2+\left|\psi_{n,s}^{(3)}\right|^2\,d y\leq C\int_{-1}^1|f_n|(|\psi_{n,s}''|+\hat{n}^2|\psi_{n,s}|)\,dy.
	\end{equation}
	With the help of \eqref{5-3-12}, \eqref{5-3-46}, and Lemma \ref{weightinequality}, one has
	\begin{equation}\label{5-3-14}
		\begin{aligned}
			&\int_{-1}^1 |\psi_{n,s}''|^2+\hat{n}^4|\psi_{n,s}|^2\,dy\\
			\leq& C\left(\inte |\psi_{n,s}''|^2(1-y^2)\,dy\right)^\frac23\left(\inte |\psi_{n,s}^{(3)}|^2\,dy\right)^\frac13+C\inte |\psi_{n,s}''|^2(1-y^2)\,dy\\
			&+C\left(\inte \hat{n}^4|\psi_{n,s}|^2(1-y^2)\,dy\right)^\frac23\left(\inte \hat{n}^4|\psi_{n,s}'|^2\,dy\right)^\frac13+C\inte \hat{n}^4|\psi_{n,s}|^2(1-y^2)\,dy\\
			\leq&C(1+L)^\frac43|\Phi \hat{n}|^{-\frac23}\int_{-1}^1|f_n|(|\psi_{n,s}''|+\hat{n}^2|\psi_{n,s}|)\,dy\\
			\leq&C(1+L)^\frac43|\Phi \hat{n}|^{-\frac23}\left(\int_{-1}^1|f_n|^2\,dy\right)^\frac12 \left(\int_{-1}^1|\psi_{n,s}''|^2+\hat{n}^4|\psi_{n,s}|^2\,dy\right)^\frac12.
		\end{aligned}
	\end{equation}
	Therefore, it holds that
	\begin{equation}\label{5-3-15}
		\int_{-1}^1 |\psi_{n,s}''|^2+\hat{n}^4|\psi_{n,s}|^2\,dy\leq C(1+L)^\frac83|\Phi \hat{n}|^{-\frac43}\int_{-1}^1|f_n|^2\,dy.
	\end{equation}
	Using \eqref{5-3-15} and Cauchy-Schwarz inequality, it follows from \eqref{5-3-12} and \eqref{5-3-46} that one has
	\begin{equation}\label{5-3-17}
		\int_{-1}^1\hat{n}^2 |\psi_{n,s}|^2+|\psi_{n,s}'|^2\,dy\leq C(1+L)^\frac{10}{3}|\Phi \hat{n}|^{-\frac53}\int_{-1}^1|f_n|^2\,dy
	\end{equation}
	and
	\begin{equation}\label{5-3-18}
		\int_{-1}^1 \hat{n}^6|\psi_{n,s}|^2+3\hat{n}^4|\psi_{n,s}'|^2+3\hat{n}^2|\psi''_{n, s}|^2+\left|\psi_{n,s}^{(3)}\right|^2\,d y\leq C(1+L)^\frac43|\Phi \hat{n}|^{-\frac23}\int_{-1}^1|f_n|^2\,dy.
	\end{equation}

	On the other hand, by virtue of integration by parts and Cauchy-Schwarz inequality, one has
	\begin{equation}\label{5-3-47}\ba
		\left|\Re\int_{-1}^1 f_n\overline{\psi_{n,s}''}\,dy\right|= &~~\left|\Re\int_{-1}^1 i\hat{n} F_{2,n}\overline{\psi_{n,s}''}+F_{1,n}\overline{\psi_{n,s}^{(3)}}\,dy\right|\\
		\leq&~~ C\inte |\BF_n|^2\,dy+\frac12\inte \hat{n}^2|\psi_{n,s}''|^2\,dy+\frac12\inte|\psi_{n,s}^{(3)}|^2\,dy
		\ea\end{equation}
	and
	\begin{equation}\label{5-3-48}\ba
		\left|\Re\int_{-1}^1 \hat{n}^2f_n\overline{\psi_{n,s}}\,dy\right|=&~~\left|\Re\int_{-1}^1 i\hat{n}^3 F_{2,n}\overline{\psi_{n,s}}+\hat{n}^2F_{1,n}\overline{\psi_{n,s}'}\,dy\right|\\
		\leq&~~ C\inte |\BF_n|^2\,dy+\frac12\inte \hat{n}^6|\psi_{n,s}|^2\,dy+\frac12\inte \hat{n}^4|\psi_{n,s}'|^2\,dy.
		\ea\end{equation}
	Combining \eqref{5-3-13} and \eqref{5-3-47}--\eqref{5-3-48} gives \eqref{est5-5}.
	
	Applying integration by parts to the inequality \eqref{5-3-12} and using \eqref{est5-5} yield
	\begin{equation}\label{5-3-55}
		\begin{aligned}
			&\int_{-1}^1 \hat{n}^4|\psi_{n,s}|^2(1-y^2)+ \hat{n}^2|\psi_{n,s}'|^2(1-y^2)+|\psi_{n,s}''|^2(1-y^2)+ \hat{n}^2 |\psi_{n,s}|^2+|\psi_{n,s}'|^2\,d y\\
			\leq &C(1+L)^2|\Phi \hat{n}|^{-1}\left(\left|\int_{-1}^1F_{2,n} i\hat{n}^3\overline{\psi_{n,s}}+F_{1,n}\hat{n}^2\overline{\psi_{n,s}'}\,dy\right|+\left|\int_{-1}^1F_{2,n} i\hat{n}\overline{\psi_{n,s}''}+F_{1,n}\overline{\psi_{n,s}^{(3)}}\,dy\right|\right)\\
			\leq& C(1+L)^2|\Phi \hat{n}|^{-1}\left(\int_{-1}^1|\BF_n|^2\,dy\right)^\frac12\left(\int_{-1}^1|\psi_{n,s}^{(3)}|^2+\hat{n}^2|\psi_{n,s}''|^2+\hat{n}^4|\psi_{n,s}'|^2+\hat{n}^6|\psi_{n,s}|^2\,dy\right)^\frac12\\
			\leq& C(1+L)^2|\Phi \hat{n}|^{-1}\int_{-1}^1|\BF_n|^2\,dy.
		\end{aligned}
	\end{equation}
	Furthermore, it follows from Lemma \ref{weightinequality}, \eqref{est5-5} and \eqref{5-3-55} that one has
	\begin{equation}\label{5-3-53}\begin{aligned}
			&\inte \hat{n}^4|\psi_{n,s}|^2+\hat{n}^2|\psi_{n,s}'|^2+|\psi_{n,s}''|^2\,dy\\
			\leq &C\left(\inte \hat{n}^4|\psi_{n,s}|^2(1-y^2)\,dy\right)^\frac23\left(\inte \hat{n}^4|\psi_{n,s}'|^2\,dy\right)^\frac13+C\inte \hat{n}^4|\psi_{n,s}|^2(1-y^2)\,dy\\
			&+C\left(\inte \hat{n}^2|\psi_{n,s}'|^2(1-y^2)\,dy\right)^\frac23\left(\inte \hat{n}^2|\psi_{n,s}''|^2\,dy\right)^\frac13+C\inte \hat{n}^2|\psi_{n,s}'|^2(1-y^2)\,dy\\
			&+C\left(\inte |\psi_{n,s}''|^2(1-y^2)\,dy\right)^\frac23\left(\inte |\psi_{n,s}^{(3)}|^2\,dy\right)^\frac13+C\inte |\psi_{n,s}''|^2(1-y^2)\,dy\\
			\leq& C(1+L)^\frac43|\Phi\hat{n}|^{-\frac23}\int_{-1}^1|\BF_n|^2\,dy.
	\end{aligned}\end{equation}
	
	We also give the estimate of $|\psi_{n,s}'(1)|$ in terms of $\BF_n$, which plays an important role in recovering the boundary value of $\psi_{n}$. By Lemma \ref{lemmaA2}, it holds that
	\be\nonumber
	\ba
	|\psi_{n,s}'(\pm1)|\leq C\left(\inte |\psi_{n,s}'|^2\,dy\right)^\frac14\left(\inte |\psi_{n,s}''|^2\,dy\right)^\frac14
	\leq  C(1+L)^\frac56|\Phi\hat{n}|^{-\frac{5}{12}}\left(\inte |\BF_n|^2\,dy\right)^\frac12.
	\ea\ee
	
	In fact, the existence  and uniqueness of the solutions follow from the above a priori estimates and standard Galerkin method. For more details, one may refer  to \cite[Appendix B]{SWX}. This finishes the proof of Lemma \ref{slip-est-e}.
\end{proof}

For a general function $f_n$, it can be written as
\be \nonumber
f_n(y)=f_{n}^e(y)+f_{n}^o(y):=\frac{f_n(y)+f_n(-y)}{2}+\frac{f_n(y)-f_n(-y)}{2},
\ee
where $f_{n}^e(y)$ and $f_{n}^o(y)$ are even and odd functions, respectively.  Assume that
$\psi_{n,s}^e(y)$ and $\psi_{n,s}^o(y)$ are solutions to the problems
\be \label{slip-e}
\left\{
\ba
&-i\hat{n}U''(y)\psi_{n,s}^e+ i\hat{n}U(y)\left(\frac{d^2}{d y^2}-\hat{n}^2\right)\psi_{n,s}^e-\left(\frac{d^2}{d y^2}-\hat{n}^2\right)^2\psi_{n,s}^e= f_n^e ,\\
&\psi_{n,s}^e(\pm 1)=(\psi_{n,s}^e)''(\pm 1)=0
\ea
\right.
\ee
and
\begin{equation}\label{slip-o}
	\left\{
	\begin{aligned}
		&-i\hat{n}U''(y)\psi_{n,s}^o+ i\hat{n}U(y)\left(\frac{d^2}{d y^2}-\hat{n}^2\right)\psi_{n,s}^o-\left(\frac{d^2}{d y^2}-\hat{n}^2\right)^2\psi_{n,s}^o= f_n^o ,\\
		&\psi_{n,s}^o(\pm1)=(\psi_{n,s}^o)''(\pm1)=0,
	\end{aligned}
	\right.
\end{equation}
respectively.
Let
\begin{equation*}
	\psi_{n,s}(y)=\psi_{n,s}^e(y)+\psi_{n,s}^o(y).
\end{equation*}
Combining Lemmas \ref{slip-est-o} and \ref{slip-est-e}, one has the following result for problem \eqref{slip}.

\begin{lemma}\label{slip-est} Assume that {$\BF_n\in L^2(\Omega)$}.  The problem \eqref{slip} has a unique solution   $\psi_{n, s} \in H^3(-1, 1)$. The solution $\psi_{n, s}$ satisfies the estimates \eqref{est5-3}-\eqref{est5-5}. Moreover, if $f_n \in L^2(-1, 1)$, the solution $\psi_{n, s}$ satisfies the estimates \eqref{slip-est-e-1}-\eqref{slip-est-e-3}. And consequently,
	\begin{equation}\nonumber
		\left|\psi_{n,s}'(\pm1)\right|\le C(1+L)^\frac56|\Phi\hat{n}|^{-\frac{5}{12}}\left(\inte |\BF_n|^2\,dy\right)^\frac12.
	\end{equation}
\end{lemma}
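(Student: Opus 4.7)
The plan is to decompose the data into even and odd parts and then invoke Lemmas \ref{slip-est-o} and \ref{slip-est-e} on the two resulting subproblems. Given $\BF_n = (F_{1,n}, F_{2,n})$, I would split each component into its even and odd parts, $F_{j,n} = F_{j,n}^e + F_{j,n}^o$, and a direct calculation shows that $f_n = f_n^e + f_n^o$ where $f_n^e := i\hat{n} F_{2,n}^e - (F_{1,n}^o)'$ is even and $f_n^o := i\hat{n} F_{2,n}^o - (F_{1,n}^e)'$ is odd. Introducing the auxiliary force fields $\BF_n^e := (F_{1,n}^o, F_{2,n}^e)$ and $\BF_n^o := (F_{1,n}^e, F_{2,n}^o)$, orthogonality of even and odd functions in $L^2(-1,1)$ yields the Pythagorean identity $\|\BF_n^e\|_{L^2}^2 + \|\BF_n^o\|_{L^2}^2 = \|\BF_n\|_{L^2}^2$, and similarly $\|f_n^e\|_{L^2}^2 + \|f_n^o\|_{L^2}^2 = \|f_n\|_{L^2}^2$.

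Next I would solve \eqref{slip-e} and \eqref{slip-o} separately. Lemma \ref{slip-est-e} produces a unique even $\psi_{n,s}^e \in H^3(-1,1)$ obeying \eqref{est5-3}--\eqref{est5-5} (and \eqref{slip-est-e-1}--\eqref{slip-est-e-3} when $f_n^e \in L^2$) with $\BF_n^e, f_n^e$ on the right-hand side, and Lemma \ref{slip-est-o} produces a unique odd $\psi_{n,s}^o \in H^3(-1,1)$ obeying the (sharper) estimates \eqref{slip-est-o-4}--\eqref{slip-est-o-6} and \eqref{slip-est-o-1}--\eqref{slip-est-o-3}. By linearity of \eqref{slip}, the sum $\psi_{n,s} := \psi_{n,s}^e + \psi_{n,s}^o$ is an $H^3$-solution of \eqref{slip} with data $f_n$, and uniqueness follows at once from the estimate \eqref{est5-3} applied to the difference of two solutions (whose data vanishes).

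To aggregate the bounds, the triangle inequality combined with the Pythagorean identity above reduces everything to sums of the two sets of estimates. Under the standing hypothesis $\Phi \ge \tilde{C}(1+L)^{25}$, one checks $|\Phi\hat{n}|^{-1/3} \ll (1+L)^{-6}$ for $|n| \ge 1$, so the $(1+L)^\alpha$ prefactors from Lemma \ref{slip-est-e} strictly dominate the cleaner Lemma \ref{slip-est-o} bounds, and the combined estimates take exactly the form in \eqref{est5-3}--\eqref{est5-5} and \eqref{slip-est-e-1}--\eqref{slip-est-e-3}. Finally, the boundary value estimate is obtained by the interpolation inequality of Lemma \ref{lemmaA2}, $|\psi_{n,s}'(\pm 1)| \le C \|\psi_{n,s}'\|_{L^2}^{1/2} \|\psi_{n,s}''\|_{L^2}^{1/2}$, applied exactly as in the final step of the proof of Lemma \ref{slip-est-e}, with the interior bounds from \eqref{est5-3} and \eqref{est5-4} plugged in. I do not expect any genuine obstacle: the result is essentially a bookkeeping consequence of the two previous lemmas, and the only point requiring care is verifying that the parity decomposition of $\BF_n$ is consistent with that of $f_n$ so that $\|\BF_n^e\|_{L^2}, \|\BF_n^o\|_{L^2} \le \|\BF_n\|_{L^2}$.
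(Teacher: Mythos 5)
Your proposal matches the paper's own (largely implicit) argument: split $f_n$ into even and odd parts induced by the parity decomposition of $\BF_n$, solve \eqref{slip-e} and \eqref{slip-o} via Lemmas \ref{slip-est-e} and \ref{slip-est-o}, add the solutions, absorb the sharper odd-case bounds into the $(1+L)$-weighted even-case bounds (which works since $|\Phi\hat{n}|\gtrsim 1$ in the regime considered), and recover the boundary estimate from Lemma \ref{lemmaA2} together with \eqref{est5-3}--\eqref{est5-4}. The only phrasing to tighten is uniqueness: rather than applying \eqref{est5-3} directly to the difference of two solutions, split that zero-data difference into its even and odd parts (each solves the homogeneous problem because $U$ is even and the boundary conditions are symmetric) and invoke the uniqueness assertions of Lemmas \ref{slip-est-o} and \ref{slip-est-e}.
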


Now we give the proof of Proposition \ref{mediumstream}.
\begin{proof}[Proof of Proposition \ref{mediumstream}]
	The proof is divided into six steps. We first construct the solution $\psi_n$ in the case that $f_n$ is an even function. The key ideas to handle the case with a general function $f_n$ is given in Step 5. The uniqueness of the solution is proved in the last step.
	
	{\em Step 1. Boundary layer analysis.} In order to recover the no-slip boundary condition, we analyze the associated boundary layer carefully. Define the operators
	\begin{equation}\nonumber
		\mcA_n=i\frac{3\Phi\hat{n}}{4}(1-y^2)-\left(\frac{d^2}{dy^2}-\hat{n}^2\right)\text{ and }
		~~\mcH_n=\frac{d^2}{dy^2}-\hat{n}^2.
	\end{equation}
	Let
	\begin{equation}\nonumber
		\mcA_n^\pm=i\frac{3\Phi\hat{n}}{2}(1\mp y)-\left(\frac{d^2}{dy^2}-\hat{n}^2\right).
	\end{equation}
	They can be regarded as the leading parts of the operator $\mcA_n$ near the boundary $y=\pm1$, respectively.
	
	We look for two  boundary layer functions $\psi_{n,BL}^\pm(y)$, which are the solutions to
	\begin{equation}\label{blayereq}
		\mcA_n^\pm\mcH_n\psi_{n,BL}^\pm=0,
	\end{equation}
	respectively.
	First, let $Ai(z)$ denote the standard Airy function which satisfies
	\begin{equation}\nonumber
		\frac{d^2Ai(z)}{d z^2}-zAi(z)=0 ~~\text{ in }\mathbb{C}.
	\end{equation}
	Define
	\begin{equation}\nonumber
		\widetilde{G}_{n,\Phi}(\rho)=\left\{\begin{aligned}
			& Ai\left(C_+\left(\rho+\frac{2\beta \hat{n}}{3i\Phi}\right)\right),~~\text{if }n> 0,\\
			&Ai\left(C_-\left(\rho+\frac{2\beta \hat{n}}{3i\Phi}\right)\right),~~\text{if }n<0,
		\end{aligned}\right.
	\end{equation}
	where $\beta$ is defined in \eqref{defbeta} and
	\[
	\quad C_\pm=e^{\pm i\frac{\pi}{6}}.
	\]
	One can check that
	\begin{equation}\nonumber
		\mcA_n^\pm\widetilde{G}_{n,\Phi}(\beta(1\mp y))=0.
	\end{equation}
	Next, set
	\begin{equation}\label{5-3-61}
		G_{n,\Phi}(\rho)=\int_\rho^{+\infty}e^{-\frac{|\hat{n}|}{\beta}(\rho-\tau)}
		\int_\tau^{+\infty}e^{-\frac{|\hat{n}|}{\beta}(s-\tau)}\widetilde{G}_{n,\Phi}(s)\,ds\,d\tau.
	\end{equation}
	The straightforward computations show that
	\begin{equation}\nonumber
		\frac{d^2}{d \rho^2}G_{n,\Phi}(\rho)-\frac{\hat{n}^2}{\beta^2}G_{n,\Phi}(\rho)=\widetilde{G}_{n,\Phi}(\rho).
	\end{equation}
	Define
	\begin{equation}\nonumber
		\psi_{n,BL}^\pm=C_{0,n,\Phi}G_{n,\Phi}(\beta(1\mp y))
	\end{equation}
	with
	\begin{equation}\label{5-3-62}
		C_{0,n,\Phi}=
		\left\{\begin{aligned}
			&\frac{1}{G_{n,\Phi}(0)},\quad &\text{if }|G_{n,\Phi}(0)|\ge 1,\\
			&1,\quad &\text{otherwise}.
		\end{aligned}\right.
	\end{equation}
	This implies  that $|\psi^{\pm}_{n,BL}(1)|\le 1$ and $\psi^{\pm}_{n,BL}$ satisfies \eqref{blayereq} for $y\in (-1,1)$.

	Let $\chi ^+\in C^\infty([-1,1])$ be an increasing function satisfying \eqref{5-3-5} and $\chi^-(y)=\chi^+(-y)$. Define
	\begin{equation}\nonumber
		\psi_{n,BL}=\chi^+\psi_{n,BL}^++\chi^-\psi_{n,BL}^-.
	\end{equation}
	
	{\em Step 2. The error term $\psi_{n,e}$.} Suppose that $\psi_{n,e}$ satisfies the following problem
	\begin{equation}\nonumber
		\left\{\begin{aligned}
			&-i\hat{n} U''(y)\psi_{n,e}+i\hat{n}U(y)\left(\frac{d^2}{dy^2}
			-\hat{n}^2\right)\psi_{n,e}-\left(\frac{d^2}{dy^2}-\hat{n}^2\right)^2
			\psi_{n,e}=Q_{n,BL},\\
			&\psi_{n,e}(\pm1)=\psi_{n,e}''(\pm1)=0,
		\end{aligned}\right.
	\end{equation}
	where
	\begin{equation}\nonumber Q_{n,BL}=i\hat{n} U''(y)\psi_{n,BL}-\mcA_n\mcH_n\psi_{n,BL}.
	\end{equation}
	
	According to Lemma \ref{airy-est}, for sufficiently large $\Phi$ such that $\frac12 \beta= \frac12\left|\frac{3\Phi\hat{n}}{2}\right|^\frac13\ge \frac12\left|\frac{3\Phi}{2L}\right|^\frac13\ge R$, where $R$ is the constant appeared in Lemma \ref{airy-est}, one has
	\be\nonumber
	\ba
	&~~\inte  \left|i\hat{n} U''(y)\psi_{n,BL}\right|^2\,dy\leq C|\Phi\hat{n}|^2\int_\frac14^1 \left|\psi_{n,BL}^+\right|^2\,dy\\
	\leq&~~C\left(\int_0^R+\int_R^{\frac34\beta}\right)\beta^5|G_{n,\Phi}(\rho)|^2\,d\rho\\
	\leq&~~C\int_0^R\beta^5\,d\rho+C \int_R^{\frac34\beta}\beta^5e^{-2\rho}\,d\rho\\
	\leq&~~C\beta^5.
	\ea\ee
	Using the fact  $\mcA_n^+\mcH_n\psi_{n,BL}^+=0$ yields
	\be\nonumber
	\ba
	-\mcA_n \mcH_n (\chi^+\psi_{n,BL}^+)=&~~
	\mcA_n^+\mcH_n((1-\chi^+)\psi_{n,BL}^+)+(\mcA_n^+\mcH_n-\mcA_n\mcH_n)(\chi^+\psi_{n,BL}^+)\\
	=&~~\left(i\frac{3\Phi\hat{n}}{2}(1-y)-\frac{d^2}{dy^2}+\hat{n}^2\right)\left(\frac{d^2}{dy^2}-\hat{n}^2\right)((1-\chi^+)\psi_{n,BL}^+)\\
	&~~+i\frac{3\Phi\hat{n}}{4}(1-y)^2\left(\frac{d^2}{dy^2}-\hat{n}^2\right)(\chi^+\psi_{n,BL}^+).
	\ea\ee
	Noting that $\chi^+\psi_{n,BL}^+$ vanishes on $[-1,0]$, it suffices to estimate $\mcA_n\mcH_n(\chi^+\psi_{n,BL}^+)$ on $[0,1]$. According to Lemma \ref{airy-est}, one has
	\be\nonumber
	\ba
	&~~
	\int_0^1\left|\frac{3\Phi\hat{n}}{2 }(1-y)\left(\frac{d^2}{dy^2}-\hat{n}^2\right)((1-\chi^+)\psi_{n,BL}^+)\right|^2\,dy\\
	\le&~~C
	\int_0^\frac12\beta^6\left( |(\psi_{n,BL}^+) ''|^2+| (\psi_{n,BL}^+)'|^2
	+|\psi_{n,BL}^+ |^2+\hat{n}^4|\psi_{n,BL}^+ |^2\right)\,dy\\
	\le&~~C
	\int_{\frac12\beta}^{\beta}\beta^9|G_{n,\Phi}''(\rho)|^2+\beta^7|G_{n,\Phi}'(\rho)|^2
	+\beta^5|G_{n, \Phi }(\rho)|^2+\beta^9|G_{n,\Phi}(\rho)|^2\,d\rho.
	\ea
	\ee
	Hence one has
	\be\ba
	\int_0^1\left|\frac{3\Phi\hat{n}}{2 }(1-y)\left(\frac{d^2}{dy^2}-\hat{n}^2\right)((1-\chi^+)\psi_{n,BL}^+)\right|^2\,dy
	\le&~~C
	\int_{\frac12\beta}^{\beta}(\beta^9+\beta^7
	+\beta^5)e^{-2\rho}\,d\rho\\
	\le&~~C e^{-\beta}(\beta^{10}+\beta^8
	+\beta^6)\le C.
	\ea\ee
	On the other hand,
	\be\nonumber
	\ba
	&~~\int_0^1\left|(\mcA_n^+\mcH_n-\mcA_n\mcH_n)(\chi^+\psi_{n,BL}^+)\right|^2\,dy\\
	=&~~
	\int_\frac14^1\left|\frac{3\Phi\hat{n}}{4}(1-y)^2\left(\frac{d^2}{dy^2}-\hat{n}^2\right)(\chi^+ \psi_{n,BL}^+)\right|^2\,dy\\
	\le&~~C
	\int_\frac14^1\beta^6(1-y)^4 \left(| \left( \psi_{n,BL}^{+} \right)''|^2+|(\chi^+)'|^2|(\psi_{n,BL}^+)'|^2
	+|(\chi^+)''|^2|\psi_{n,BL}^+|^2+\hat{n}^4|\psi_{n,BL}^+|^2\right)\,dy\\
	\le&~~C
	\int_0^{\frac34\beta}\rho^4\left(\beta^5|G_{n,\Phi}''|^2+\beta^5|G_{n,\Phi}|^2\right)\,d\rho+C
	\int_{\frac12\beta}^{\frac34\beta}\rho^4\left(\beta^3|G_{n,\Phi}'|^2
	+\beta|G_{n,\Phi}|^2\right)\,d\rho\\
	\le&~~C\beta^5\int_0^R\rho^4\,d\rho
	+C\beta^5\int_R^{\frac34\beta}\rho^4e^{-2\rho}\,d\rho+C\left(\beta^3
	+\beta\right)
	\int_{\frac12\beta}^{\frac34\beta}\rho^4e^{-2\rho}\,d\rho\\
	\leq&~~C\beta^5.
	\ea\ee
	Hence it follows from Lemma \ref{slip-est} that
	\begin{equation}\label{5-3-74}
		\inte \hat{n}^2|\psi_{n,e}|^2+|\psi_{n,e}'|^2\,dy \le C(1+L)^\frac{10}{3}|\Phi\hat{n}|^{-\frac53}\beta^5\le C(1+L)^\frac{10}{3},
	\end{equation}
	\begin{equation}\label{5-3-75}
		\int_{-1}^1 \hat{n}^4|\psi_{n,e}|^2+\hat{n}^2|\psi_{n,e}'|^2+|\psi_{n,e}''|^2\,d y\leq C(1+L)^\frac{8}{3}|\Phi\hat{n}|^{-\frac43}\beta^5\le C(1+L)^\frac{8}{3}|\Phi\hat{n}|^\frac13,
	\end{equation}
	and
	\begin{equation}\label{5-3-76}
		\inte \left|\psi_{n,e}^{(3)}\right|^2+\hat{n}^2|\psi_{n,e}''|^2+\hat{n}^4|\psi_{n,e}'|^2+\hat{n}^6|\psi_{n,e}|^2\,dy \leq C(1+L)^\frac{4}{3}|\Phi\hat{n}|^{-\frac23}\beta^5\le C(1+L)^\frac{4}{3}|\Phi\hat{n}|.
	\end{equation}
	
	{\em Step 3. The irrotational flow $\psi_{n,p}$ and the associated remainder term $\psi_{n,r}$.} Denote
	\begin{equation}\nonumber
		\psi_{ n ,p}=e^{\hat{n}y}+e^{-\hat{n}y}.
	\end{equation}
	Let $\psi_{n,r}$ be the solution of the following problem,
	\begin{equation}\nonumber
		\left\{\begin{aligned}
			&-i\hat{n} U''(y)\psi_{n,r}+i\hat{n}U(y)\left(\frac{d^2}{dy^2}
			-\hat{n}^2\right)\psi_{n,r}-\left(\frac{d^2}{dy^2}-\hat{n}^2\right)^2
			\psi_{n,r}=i\hat{n} U''(y)\psi_{n,p},\\
			&\psi_{n,r}(\pm1)=\psi_{n,r}''(\pm1)=0.
		\end{aligned}\right.
	\end{equation}
	Let $a_n$ and $b_n$ be constants satisfying
	\begin{equation}\label{recoverBC1}
		\left\{\begin{aligned}
			&b_n\psi_{n,BL}^+(1)+a_n\psi_{n,p}(1)=0,\\
			&\psi_{n,s}'(1)+b_n[(\psi_{n,BL}^+)'(1)+\psi_{n,e}'(1)]+a_n[\psi_{n,p}'(1)+\psi_{n,r}'(1)]=0.
		\end{aligned}\right.
	\end{equation}
	This implies that $\psi_n =\psi_{n,s}+b_n (\psi_{n,BL}+\psi_{n,e})+a_n(\psi_{n, p}+\psi_{n,r})$ satisfies the no-slip boundary condition at $y=1$.
	Furthermore, if $f_n$ is even with respect to $y$, the associate solution $\psi_{n,s}$ is also even. Similarly, one can verify that all the components $\psi_{n,BL}$, $\psi_{n,e}$, $\psi_{n,p}$, and $\psi_{n,r}$ are all even. Thus we also have
	\begin{equation}\label{recoverBC-1}
		\left\{\begin{aligned}
			&b_n\psi_{n,BL}^-(-1)+a_n\psi_{n,p}(-1)=0,\\
			&\psi_{n,s}'(-1)+b_n[(\psi_{n,BL}^-)'(-1)+\psi_{n,e}'(-1)]+a_n[\psi_{n,p}'(-1)+\psi_{n,r}'(-1)]=0.
		\end{aligned}\right.
	\end{equation}
	Therefore, $\psi_n =\psi_{n,s}+b_n (\psi_{n,BL}+\psi_{n,e})+a_n(\psi_{n, p}+\psi_{n,r})$ satisfies \eqref{streamBC} at $y=\pm 1$.
	Solving the linear system \eqref{recoverBC1} gives
	\begin{equation}\label{5-3-79}
		a_n=-\frac{\psi_{n,BL}^+(1)}{\psi_{n,p}(1)}b_n\quad\text{and}\quad
		b_n=\frac{\psi_{n,s}'(1)}{\psi_{n,BL}^+ (1)\frac{\psi_{n,p}'(1)+\psi_{n,r}'(1)}{\psi_{n,p}(1)}
			-(\psi_{n,BL}^+)'(1)-\psi_{n,e}'(1)}.
	\end{equation}
	In order to get the estimates for $a_n$ and $b_n$, one of the key issues is to estimate $(\psi_{n,BL}^+)'(1)$. The straightforward computations show
	\begin{equation}\nonumber
		\begin{aligned}
			(\psi_{n,BL}^+)'(1)=-\beta C_{0,n,\Phi}\frac{d G_{n,\Phi}}{d \rho}(0)
			=\beta C_{0,n,\Phi}\left(\frac{|\hat{n}|}{\beta }G_{n,\Phi}(0)+C_-\int_\ell e^{-\lambda z}Ai\left(z+\lambda^2\right)\,dz\right),
		\end{aligned}
	\end{equation}
	where $\lambda=\frac{|\hat{n}|}{\beta }C_-$ and $\ell$ is the contour $\ell:=\left\{re^{i\frac{\pi}{6}}|r\ge 0\right\}$. Note that $\arg\lambda=-\frac{\pi}{6}$ and
	\begin{equation}\nonumber
		|\lambda|=\frac{|\hat{n}|}{\beta }=\left(\frac23\right)^\frac13(\Phi^{-\frac12}|\hat{n}|)^\frac23.
	\end{equation}
	Choose  $\epsilon_1\in (0,1)$ such that
	\begin{equation}\nonumber
		|\lambda|\le \min \left\{\epsilon,\ \frac{1}{48}\tilde{C}_0 \right\}
	\end{equation}
	as long as $|\hat{n}|\le \epsilon_1\Phi^\frac12$,
	where $\epsilon$ and $\tilde{C}_0$ are  the  constants indicated in Lemma \ref{airy-est}.
	One can apply Lemma \ref{airy-est} to obtain
	\begin{equation}\label{5-3-82}
		\begin{aligned}
			|(\psi_{n,BL}^+)'(1)|=&\left|\beta C_{0,n,\Phi}\left(\frac{|\hat{n}|}{\beta }G_{n,\Phi}(0)+C_-\int_\ell e^{-\lambda z}Ai\left(z+\lambda^2\right)\,dz\right)\right|\\
			\ge&\left|\beta C_{0,n,\Phi}\int_\ell e^{-\lambda z}Ai\left(z+\lambda^2\right)\,d z\right|-\left|\beta C_{0,n,\Phi}\frac{|\hat{n}|}{\beta }G_{n,\Phi}(0)\right|.
		\end{aligned}
	\end{equation}
	Hence it holds that
	\begin{equation}\label{kappa}
		\begin{aligned}
			|(\psi_{n,BL}^+)'(1)|
			\ge & \beta \tilde{C}_0\left(\frac16-\frac{|\hat{n}|}{\tilde{C}_0\beta }\right)\ge   \frac{1}{12}\beta \tilde{C}_0=:\kappa\beta .
	\end{aligned}\end{equation}
	
	For the term $\psi_{n,e}'(1)$, it follows from \eqref{5-3-74}-\eqref{5-3-75} and Lemma \ref{lemmaA2} that
	\begin{equation}\label{5-3-83}
		\begin{aligned}
			|\psi_{n,e}'(1)|\le&C\left(\inte|\psi_{n,e}'|^2\,dy\right)^\frac14\left(\inte |\psi_{n,e}''|^2\,dy\right)^\frac14\le C(1+L)^\frac32|\Phi\hat{n}|^\frac{1}{12}.
		\end{aligned}
	\end{equation}
	On the other hand, using Lemmas \ref{lemmaA2} and \ref{slip-est} again gives
	\begin{equation}\nonumber
		\begin{aligned}
			|\psi_{n,r}'(1)|\leq &C \left(\inte |\psi_{n,r}'|^2\,dy \right)^\frac14\left(\inte  |\psi_{n,r}''|^2\,dy \right)^\frac14\\
			\leq& C(1+L)^\frac32|\Phi\hat{n}|^{-\frac34} \left(\inte \left|i\hat{n} U''(y)\psi_{n,p}\right|^2\,dy \right)^\frac12\\
			\leq& C(1+L)^\frac32|\Phi\hat{n}|^\frac14 \left(\inte \left|\psi_{n,p}\right|^2\,dy \right)^\frac12\\
			\leq& C(1+L)^\frac32 |\Phi\hat{n}|^\frac14  |\hat{n} |^{-\frac12} \psi_{n,p}(1) \\
			\leq & C (1+L)^2 |\Phi \hat{n} |^{\frac14} \psi_{n, p}(1) .
		\end{aligned}
	\end{equation}
	Then there exists a uniform constant $\tilde{C}$ such that if $\Phi \ge \tilde{C}(1+L)^{25}$, then one has
	\begin{equation*}
		(1+L)^2 \leq \left(\frac{\Phi}{\tilde{C}(1+L)}\right)^\frac{1}{12}\leq (\tilde{C}^{-1 }|\Phi\hat{n}|)^\frac{1}{12}
	\end{equation*}
	and
	\begin{equation}\label{5-3-85}
		\begin{aligned}
			\left|\psi_{n,BL}^+(1)\frac{\psi_{n,p}'(1)+\psi_{n,r}'(1)}{\psi_{n,p}(1)}\right|\leq C|\hat{n}|+C(1+L)^2 |\Phi\hat{n}|^\frac14\leq \frac\kappa 4\beta,
		\end{aligned}
	\end{equation}
	provided that $\epsilon_1$ is sufficiently small, where $\kappa$ is the constant defined in \eqref{kappa}.  Combining \eqref{5-3-79}-\eqref{5-3-85}                                                                                                                     yields
	\begin{equation}\label{5-3-86}
		\begin{aligned}
			|b_n|\leq C\beta^{-1}|\psi_{n,s}'(1)|\leq C(1+L)^\frac56|\Phi\hat{n}|^{-\frac34}\left(\inte |\BF_n|^2\,dy\right)^\frac12
		\end{aligned}
	\end{equation}
	and
	\begin{equation}\label{5-3-87}
		\begin{aligned}
			|a_n|\leq &C(1+L)^\frac56|\Phi\hat{n}|^{-\frac34}|\psi_{n,p}(1)|^{-1}\left(\inte |\BF_n|^2\,dy\right)^\frac12.
		\end{aligned}
	\end{equation}
	
	Next one can obtain the estimates of the boundary layer $b_n\psi_{n,BL}$. According to Lemma \ref{airy-est}, using $|\hat{n}|\leq \epsilon_1|\Phi|^\frac12$ yields
	\begin{equation}\label{5-3-88}
		\begin{aligned}
			&|b_n|^2\inte \hat{n}^2|\psi_{n,BL}|^2+\left|\psi_{n,BL}'\right|^2\,dy\\
			\le&C|b_n|^2\int_\frac14^1\hat{n}^2|\psi_{n,BL}^+|^2+\left|\psi_{n,BL}^+\right|^2+\left|(\psi_{n,BL}^+)'\right|^2\,dy\\
			\le&C|b_n|^2\int_0^{\frac34\beta }\left(\hat{n}^2|G_{n,\Phi}(\rho)|^2+|G_{n,\Phi}(\rho)|^2+\left|\frac{d}{d\rho}G_{n,\Phi}(\rho)\right|^2\beta ^2\right)\beta^{-1}\,d \rho\\
			\le&C (1+L )^{\frac53} | \Phi\hat{n}|^{-\frac32}\inte |\BF_n|^2\,dy\left(\int_0^R\beta +\beta ^{-1}\,d\rho+\int_R^{\frac34\beta }e^{-2\rho}(\beta +\beta ^{-1})\,d\rho\right)\\
			\le&C(1+L)^\frac53|\Phi\hat{n}|^{-\frac76}\inte |\BF_n|^2\,dy\\
			\leq & C (1 + L)^2 |\Phi \hat{n}|^{-1} \inte |\BF_n|^2 \, dy.
		\end{aligned}
	\end{equation}
	Similar computations give
	\begin{equation}\label{5-3-89}
		\begin{aligned}
			&|b_n|^2\inte\left|\psi_{n,BL}^{(3)}\right|^2+\hat{n}^2\left|\psi_{n,BL}''\right|^2+\hat{n}^4\left|\psi_{n,BL}'\right|^2+\hat{n}^6\left|\psi_{n,BL}\right|^2\,dy\\
			\le & C(1+L)^\frac53|\Phi\hat{n}|^{\frac16}\inte |\BF_n|^2\,dy.
		\end{aligned}
	\end{equation}
	
	It follows from \eqref{5-3-74}-\eqref{5-3-76} and \eqref{5-3-86} that one has
	\begin{equation}\label{5-3-90} \begin{aligned}
			|b_n|^2\inte \hat{n}^2|\psi_{n,e}|^2+|\psi_{n,e}'|^2\,dy  & \le C(1+L)^5|\Phi\hat{n}|^{-\frac32}\inte |\BF_n|^2\,dy \\
			& \leq C (1+L)^2 |\Phi \hat{n}|^{-1} \inte |\BF_n|^2 \, dy
		\end{aligned}
	\end{equation}
	and
	\begin{equation}\label{5-3-91} \begin{aligned}
			& |b_n|^2\inte \left|\psi_{n,e}^{(3)}\right|^2+\hat{n}^2|\psi_{n,e}''|^2+\hat{n}^4|\psi_{n,e}'|^2+\hat{n}^6|\psi_{n,e}|^2\,dy  \\
			\leq  &C(1+L)^3|\Phi\hat{n}|^{-\frac12}\inte |\BF_n|^2\,dy \\
			\leq & C (1+L)^{\frac53} |\Phi \hat{n} |^{\frac16} \inte |\BF_n|^2 \, dy.
		\end{aligned}
	\end{equation}
	Meanwhile, the straightforward computations yield
	\begin{equation}\label{5-3-92}\begin{aligned}
			|a_n|^2\inte \hat{n}^2|\psi_{n,p}|^2+|\psi_{n,p}'|^2\,dy
			\le &C|a_n|^2\inte \hat{n}^2|e^{\hat{n}y}+e^{-\hat{n}y}|^2+\hat{n}^2|e^{\hat{n}y}-e^{-\hat{n}y}|^2\,dy\\
			\le &C|a_n|^2\inte \hat{n}^2(e^{2\hat{n}y}+e^{-2\hat{n}y})\,d y\\
			\le &C(1+L)^\frac53|\Phi\hat{n}|^{-\frac76}\inte |\BF_n|^2\,dy\\
			\leq & C (1 + L)^2 |\Phi \hat{n}|^{-1} \inte |\BF_n|^2 \, dy
	\end{aligned}\end{equation}
	and
	\be \label{5-3-93}\ba
	|a_n|^2 \inte \left| \psi_{n,p}^{(3)}\right|^2 +\hat{n}^2|\psi_{n,p}''|^2 + \hat{n}^4 |\psi_{n,p}'|^2 + \hat{n}^6| \psi_{n,p} |^2 \,dy
	\leq C(1+L)^\frac53|\Phi\hat{n}|^{\frac16}\inte |\BF_n|^2\,dy.
	\ea\ee
	Finally, if follows from Lemma \ref{slip-est} that
	\begin{equation}\label{5-3-94}\ba
		~~|a_n|^2\inte \hat{n}^2|\psi_{n,r}|^2+|\psi_{n,r}'|^2\,dy
		\leq &~~ C|a_n|^2 (1+L)^\frac{10}{3}|\Phi\hat{n}|^{-\frac53}\inte \left|i\hat{n} U''(y)\psi_{n,p}\right|^2\,dy\\
		\leq&~~ C(1+L)^5|\Phi\hat{n}|^{-\frac76}\inte |\BF_n|^2\,dy \\
		\leq &~~ C (1 + L)^2 |\Phi \hat{n}|^{-1} \inte |\BF_n|^2 \, dy
		\ea\end{equation}
	and
	\begin{equation}\label{5-3-95}\ba
		&~~|a_n|^2\inte \left|\psi_{n,r}^{(3)}\right|^2+\hat{n}^2|\psi_{n,r}''|^2+\hat{n}^4|\psi_{n,r}'|^2+\hat{n}^6|\psi_{n,r}|^2\,dy \\
		\leq&~~ C|a_n|^2 (1+L)^\frac43|\Phi\hat{n}|^{-\frac23}\inte \left|i\hat{n} U''(y)\psi_{n,p}\right|^2\,dy\\
		\leq&~~ C(1+L)^3|\Phi\hat{n}|^{-\frac16}\inte |\BF_n|^2\,dy
		\leq  C (1+L)^{\frac53} |\Phi \hat{n}|^{\frac16} \inte |\BF_n|^2 \, dy.
		\ea\end{equation}
	Combining the estimates \eqref{5-3-88}-\eqref{5-3-95} and Lemma \ref{slip-est} gives the estimates \eqref{est5-6} and \eqref{est5-7}.
	
	{\it Step 5. The case with general $f_n$.}  From Steps 1-4, if $f_n$ is an even function,  we can construct the solution $\psi_n$  to the problem \eqref{stream}-\eqref{streamBC} in the form of \eqref{decompose} with $a_n^o=b_n^o=0$. Similarly,
	if $f_n$ is an odd function, one can construct the solution $\psi_n$  to the problem \eqref{stream}-\eqref{streamBC} in the form of \eqref{decompose} with
	$a_n^e=b_n^e=0$.
	Since the boundary conditions \eqref{recoverBC1} and \eqref{recoverBC-1} can be satisfied simutaneously, one can also get the estimates \eqref{5-3-86}-\eqref{5-3-95} for $a_n^o$, $b_n^o$, $\psi_{n,BL}^o$, $\psi_{n,e}^o$,  and $\psi_{n,r}^o$ in the same way. Therefore, for a given general $f_n$, the solutions associated with $f_{n}^e$ and $f_{n}^o$ satisfy the estimates \eqref{est5-6}-\eqref{est5-7}. Hence the solution $\psi_n$ satisfies \eqref{est5-6}-\eqref{est5-7}.
	
	{\it Step 6. Uniqueness of the solution.} Note that a solution $\psi_n$ of the problem \eqref{stream}-\eqref{streamBC} has been constructed during Steps 1-5. It remains to prove the uniqueness of the solution, which is achieved by the Fredholm alternative theorem. Denote $\sigma = \frac{9}{16}\Phi^2 $ and define
	\begin{equation*}
		\mathcal{L} \psi = -i\hat{n} U''(y) +  i\hat{n} U(y) \left(\frac{d^2}{dy^2} - \hat{n}^2\right) \psi - \left( \frac{d^2}{dy^2} - \hat{n}^2 \right)^2 \psi\,\,\text{and}\,\, \mathcal{L}_\sigma =\mathcal{L}-\sigma I.
	\end{equation*}
	Define the bilinear functional $\mfa_\sigma (\cdot, \cdot)$ on $H^2_0(-1, 1)$ as follows,
	\begin{equation*}
		\mfa_\sigma (\psi, \varphi) = \int_{-1}^1  -i\hat{n} U''(y) \psi \overline{\varphi} + i \hat{n} U(y) \left(\frac{d^2}{dy^2} - \hat{n}^2 \right) \psi  \overline{\varphi}
		- \left(\frac{d^2}{dy^2} - \hat{n}^2 \right) \psi  \left(\frac{d^2}{dy^2} - \hat{n}^2 \right) \overline{\varphi}-\sigma \psi \overline{\varphi} \, dy .
	\end{equation*}
	It is clear that $\mfa_\sigma (\cdot, \cdot)$ is a bounded bilinear functional on $H_0^2(-1,1)$. {On the other hand, for any $\psi\in H_0^2(-1,1)$, one uses Cauchy-Schwarz inequality to obtain
		\begin{equation*}
			\begin{aligned}
				|\mfa_\sigma (\psi,\psi)|\ge &|\Re \mfa_\sigma (\psi,\psi)|=\left|\int_{-1}^1|\psi''|^2+2\hat{n}^2|\psi'|^2+\hat{n}^4|\psi|^2 +\sigma |\psi|^2\,dy-\Im \int_{-1}^1 \hat{n} U'\psi'\overline{\psi}\,dy\right|\\
				\ge &\int_{-1}^1|\psi''|^2+2\hat{n}^2|\psi'|^2+\hat{n}^4|\psi|^2 +\sigma |\psi|^2\,dy -\frac32\Phi|\hat{n}| \int_{-1}^1|\psi'\overline{\psi}|\,dy\\
				\ge& \int_{-1}^1|\psi''|^2+2\hat{n}^2|\psi'|^2+\hat{n}^4|\psi|^2 +\sigma |\psi|^2\,dy -\int_{-1}^1\hat{n}^2|\psi'|^2+\frac{9}{16}\Phi^2 |\psi|^2\,dy\\
				=&\int_{-1}^1|\psi''|^2+ \hat{n}^2|\psi'|^2+\hat{n}^4|\psi|^2 \,dy.
			\end{aligned}
		\end{equation*}
		Hence $\mfa_\sigma (\cdot,\cdot)$ is coercive in $H_0^2(-1,1)$.} For every $ \mfg \in H^{-1}(-1, 1)$, denote $\langle \mfg, \varphi \rangle $ by the linear functional $\mfg$ acting on $\varphi \in H_0^2(-1, 1)$. Hence one has
	\[
	|\langle \mfg, \varphi \rangle|\leq \|\mfg\|_{H^{-1}(-1,1)}\|\varphi\|_{H_0^2(-1,1)}.
	\]
	It follows from Lax-Milgram theorem that for any $\mfg\in H^{-1}(-1,1)$,
	there exists a unique  $\psi \in H_0^2(-1, 1)$ such that
	\begin{equation*}
		\mfa_\sigma (\psi, \varphi)  =  \langle \mfg, \varphi\rangle  \ \ \ \ \ \ \ \mbox{for any}\ \varphi \in H^2_0(-1, 1),
	\end{equation*}
	which means that $\psi$ is a weak solution to the problem
	\begin{equation} \label{augmented}
		\left\{
		\ba
		&\mathcal{L}_\sigma \psi = \mfg , \ \ \ \ \ \mbox{in}\ (-1, 1), \\
		&\psi(\pm 1) = \psi^{\prime}(\pm 1) = 0.
		\ea
		\right.
	\end{equation}
	We denote $\psi=\mathcal{L}_\sigma^{-1}(\mfg)$. Moreover, it holds that
	\begin{equation*}
		\|     \psi     \|_{H^2(-1, 1) } \leq C \| \mfg\|_{H^{-1}(-1, 1)}.
	\end{equation*}
	According to the standard regularity theory for elliptic systems, $\psi \in H^3(-1, 1)$ and
	{
		\begin{equation*}
			\ba
			&\| \psi\|_{H^3(-1, 1)} \\
			\leq & C \left\|-i \hat{n}U''(y) \psi  + i \hat{n} U(y) \left(\frac{d^2}{dy^2} - \hat{n}^2 \right) \psi +2\hat{n}^2\psi''-\hat{n}^4\psi-\sigma \psi \right\|_{L^2(-1, 1)}
			+ C \|\mfg\|_{H^{-1}(-1, 1)}\\
			\leq & C \|\mfg\|_{H^{-1}(-1, 1)}.
			\ea \end{equation*}
	}
	This implies $\mathcal{L}_\sigma^{-1}$ is a bounded operator from $H^{-1}(-1,1)$ to $\mathbf{X}:=H^{3}(-1,1)\cap H_0^2(-1,1)$, and thus a compact map from $\mathbf{X}$ into itself.

	Clearly, in order to show the uniqueness of solutions for problem   \eqref{stream}-\eqref{streamBC}, i.e., the problem $\mathcal{L}\psi= \mathcal{L}_\sigma(\psi+\sigma \mathcal{L}_\sigma^{-1}\psi) =0$ has only zero solution,  we need only to show that the problem
	\begin{equation}\label{homopb}
		\psi+\sigma \mathcal{L}_\sigma^{-1}\psi =0
	\end{equation}
	has only zero solution. According to Fredholm alternative theorem,  we need only to show that for any $\phi \in \mathbf{X}$, the problem
	\begin{equation}\label{ontoeq}
		\psi + \sigma \mathcal{L}_\sigma^{-1} \, \psi = \phi
	\end{equation}
	always has a solution in  $\mathbf{X}$.
	For every function $\phi \in \mathbf{X}$, there exists a $\mfg \in H^{-1}(-1, 1)$, such that
	$ \mathcal{L}_\sigma \phi = \mfg, $ i.e., $\phi = \mathcal{L}_\sigma^{-1} (\mfg).$
	On the other hand, for any $\mfg\in H^{-1}(-1,1)$, there exist $\mfg_1$, $\mfg_2\in L^2(-1,1)$ such that $\mfg=\mfg_1'+\mfg_2$ (cf. \cite{Adams}).
	In fact, following Steps 1-5, for the functions  $\mfg_1$, $\mfg_2\in L^2(-1,1)$, one can also show that the problem
	\begin{equation*}
		\left\{ \ba
		&\mathcal{L} \psi = \mfg_1' + \mfg_2, \ \ \ \mbox{in}\ (-1, 1), \\
		&\psi(\pm 1) = \psi^{\prime}(\pm 1) = 0
		\ea
		\right.
	\end{equation*}
	has a solution $\psi \in\mathbf{X}$. Hence  $\psi$ satisfies
	\[
	\sigma \psi +\mathcal{L}_\sigma \psi =\mathcal{L}_\sigma\phi.
	\]
	Therefore,  for each $\phi\in \mathbf{X}$, there exists a  $\psi\in \mathbf{X}$ satisfies \eqref{ontoeq}.
	This gives the  uniqueness of problem \eqref{stream}-\eqref{streamBC} so that  the proof of Proposition \ref{mediumstream} is completed.
\end{proof}


\begin{pro} \label{mediumv}
	Assume that $\Phi\ge \tilde{C}(1+L)^{63}$ and $1 \le |n| \le \epsilon_1L\sqrt{\Phi}$, where $\tilde{C}$ and $\epsilon_1$ are the constants indicated in Proposition \ref{mediumstream}. The corresponding velocity field  $\Bv_n$  satisfies
	{ \be \label{est5-16}
		\|\Bv_n\|_{L^2(\Omega)} \leq C(1+L)  |\Phi \hat{n} |^{-\frac{1}{2}} \|\BF_n\|_{L^2(\Omega)} \leq C(1+L)^{\frac32}  \Phi^{-\frac12} \|\BF_n \|_{L^2(\Omega)}
		\leq C \Phi^{-\frac{10}{21}} \| \BF_n\|_{L^2(\Omega)},
		\ee}
	\be \label{est5-17}
	\|\Bv_n\|_{H^2(\Omega)} \leq C(1+L)^\frac56 |\Phi \hat{n} |^{\frac{1}{12}} \|\BF_n\|_{L^2(\Omega)} \leq C(1+L)^\frac56 \Phi^\frac18  \|\BF_n \|_{L^2(\Omega)},
	\ee
	and
	{ \be \label{est5-17-1}
		\|\Bv_n\|_{H^\frac53(\Omega)} \leq C_2  \|\BF_n \|_{L^2(\Omega)},
		\ee}
	where $C$ and $C_2$ are uniform constants independent of $L$, $n$, $\Phi$.
\end{pro}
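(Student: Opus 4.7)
The plan is to deduce the velocity-field estimates directly from the stream-function estimates \eqref{est5-6}--\eqref{est5-7} in Proposition \ref{mediumstream} via the relations $v_{1,n}=-\psi_n'$ and $v_{2,n}=i\hat{n}\psi_n$ (both of which follow from the definition of $\psi_n$ and the divergence-free condition \eqref{divergencefree}), followed by a standard Sobolev interpolation for the fractional norm.

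First, since $\Bv_n(x,y)=(v_{1,n}(y)\Be_1+v_{2,n}(y)\Be_2)e^{i\hat{n}x}$, Plancherel in $x$ gives
\[
\|\Bv_n\|_{L^2(\Omega)}^2 = 2L\pi\!\int_{-1}^1\!\bigl(|\psi_n'|^2+\hat{n}^2|\psi_n|^2\bigr)\,dy,
\]
and, after collecting derivatives, the dominant terms of $\|\Bv_n\|_{H^2(\Omega)}^2$ are controlled by $2L\pi\!\int_{-1}^1(|\psi_n^{(3)}|^2+\hat{n}^2|\psi_n''|^2+\hat{n}^4|\psi_n'|^2+\hat{n}^6|\psi_n|^2)\,dy$. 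Applying \eqref{est5-6} and \eqref{est5-7} yields the two basic bounds
\[
\|\Bv_n\|_{L^2(\Omega)}\leq C(1+L)|\Phi\hat{n}|^{-\frac12}\|\BF_n\|_{L^2(\Omega)},\qquad
\|\Bv_n\|_{H^2(\Omega)}\leq C(1+L)^{\frac56}|\Phi\hat{n}|^{\frac{1}{12}}\|\BF_n\|_{L^2(\Omega)}.
\]
The subsequent $L$-only forms in \eqref{est5-16}--\eqref{est5-17} come from the two sides of the frequency range: $|\hat{n}|\geq 1/L$ converts $|\Phi\hat{n}|^{-1/2}$ into $(L/\Phi)^{1/2}$, and $|\hat{n}|\leq \epsilon_1\sqrt{\Phi}$ converts $|\Phi\hat{n}|^{1/12}$ into $\Phi^{1/8}$. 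Finally the bound $C\Phi^{-10/21}$ in \eqref{est5-16} follows by absorbing $(1+L)^{3/2}\Phi^{-1/2}$ via $\Phi\geq \tilde{C}(1+L)^{63}$, since $(1+L)^{3/2}\leq C\Phi^{1/42}$ and $1/42-1/2=-10/21$.

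For the fractional bound \eqref{est5-17-1}, the plan is to apply Sobolev interpolation on a strip,
\[
\|\Bv_n\|_{H^{5/3}(\Omega)}\leq C\|\Bv_n\|_{L^2(\Omega)}^{\frac16}\|\Bv_n\|_{H^2(\Omega)}^{\frac56},
\]
and substitute the two bounds above to get
\[
\|\Bv_n\|_{H^{5/3}(\Omega)}\leq C(1+L)^{\frac{31}{36}}|\Phi\hat{n}|^{-\frac{1}{72}}\|\BF_n\|_{L^2(\Omega)}.
\]
Using $|\hat{n}|\geq 1/L$ once more, this is at most $C(1+L)^{31/36+1/72}\Phi^{-1/72}\|\BF_n\|_{L^2(\Omega)}=C(1+L)^{63/72}\Phi^{-1/72}\|\BF_n\|_{L^2(\Omega)}$; the assumption $\Phi\geq \tilde{C}(1+L)^{63}$ then makes the prefactor a uniform constant $C_2$, independent of $L$, $n$, and $\Phi$.

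The only genuinely delicate point is the bookkeeping at the interpolation step: the exponent $5/3$ is not free — it is selected so that the $(1+L)$ power produced by interpolation, $31/36$, and the extra factor $L^{1/72}$ coming from $|\hat{n}|\geq 1/L$ combine to exactly $63/72$, which is precisely the power that the hypothesis $\Phi\geq \tilde{C}(1+L)^{63}$ is designed to absorb. In other words, the exponent $63$ on $(1+L)$ in the flux threshold and the choice of $H^{5/3}$ are matched, so I would double-check the arithmetic $31/36+1/72=63/72$ and the interpolation inequality itself (which is standard on the periodic strip via Fourier in $x$ and a classical interpolation in $y$) as the only non-routine items; everything else is a direct substitution.
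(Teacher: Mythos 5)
Your proposal is correct, and the overall strategy coincides with the paper's: read off the velocity bounds from the stream-function estimates \eqref{est5-6}--\eqref{est5-7} of Proposition \ref{mediumstream}, then interpolate $\|\Bv_n\|_{H^{5/3}}\le \|\Bv_n\|_{L^2}^{1/6}\|\Bv_n\|_{H^2}^{5/6}$ and absorb the remaining $(1+L)^{31/36}|\Phi\hat n|^{-1/72}$ factor using $|\hat n|\ge 1/L$ and $\Phi\ge\tilde C(1+L)^{63}$; your exponent bookkeeping ($31/36+1/72=63/72$, $1/42-1/2=-10/21$, $|\Phi\hat n|^{1/12}\le\Phi^{1/8}$) matches the paper exactly. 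The one place where you diverge is the $H^2$ step: the paper does not compute $\|\Bv_n\|_{H^2}$ directly from $\psi_n$, but instead writes the elliptic equation \eqref{vorticity} for $\Bv_n$ with the vorticity as source, applies $H^2$ elliptic regularity as in \eqref{5-3-96}, and then needs Remark \ref{uniformL-1} to argue that the elliptic constant is independent of the period $L$. Your route --- exploiting that $\Bv_n$ is a single Fourier mode with $v_{1,n}=-\psi_n'$, $v_{2,n}=i\hat n\psi_n$, so that every second derivative of $\Bv_n$ is exactly one of the four terms $|\psi_n^{(3)}|^2$, $\hat n^2|\psi_n''|^2$, $\hat n^4|\psi_n'|^2$, $\hat n^6|\psi_n|^2$ appearing in \eqref{est5-7} --- is legitimate and in fact simpler, since it sidesteps the $L$-uniformity issue for the elliptic estimate altogether. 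The only point you should make explicit rather than bury in ``dominant terms'' is the control of the lower-order part of the $H^2$ norm (the first derivatives and the $L^2$ piece, e.g. $\int\hat n^2|\psi_n'|^2\,dy$ and $\int|\psi_n''|^2\,dy$, which are not literally among the four terms above when $|\hat n|<1$): they are bounded by combining \eqref{est5-6} with \eqref{est5-7} (e.g.\ by Cauchy--Schwarz between the two, or by Lemma \ref{lemmaA1} applied to $\psi_n'$), and the resulting prefactor $(1+L)|\Phi\hat n|^{-1/2}$ is dominated by $(1+L)^{5/6}|\Phi\hat n|^{1/12}$ because $|\Phi\hat n|\ge\Phi/L$ is large under the flux assumption; this is a one-line absorption, the same one implicitly used in the paper when the $\|\Bv_n\|_{L^2}$ term of \eqref{5-3-96} is folded into \eqref{est5-17}.
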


\begin{proof}
	Let
	\begin{equation}\nonumber
		\omega_n=\left(\frac{d^2}{dy^2}- \hat{n}^2\right)\psi_n(y)e^{i\hat{n}x}
	\end{equation}
	be the corresponding vorticity. Straightforward computations show that $\Bv_n$ satisfies the elliptic equation
	\be \label{vorticity}
	\Delta \Bv_n = \left(-\frac{d}{dy}\omega_n, \ i \hat{n} \omega_n \right)e^{i\hat{n}x}\ \ \ \ \mbox{in}\ \Omega.
	\ee
	Applying the regularity theory for the elliptic equation with homogeneous boundary conditions (\hspace{1sp}\cite{GT}) gives
	\be \label{5-3-96}
	\| \Bv_n\|_{H^2(\Omega) } \leq C \left\|\nabla(\omega_n e^{i\hat{n}x})\right\|_{L^2(\Omega)}+C\| \Bv_n\|_{L^2(\Omega) } .
	\ee
	It follows from Proposition \ref{mediumstream} that one has
	\be \label{5-3-97}
	\begin{aligned}
		\|\Bv_n\|_{L^2(\Omega)}^2 \leq &C L\inte |\psi_n'|^2+\hat{n}^2|\psi_n|^2 \,dy
		\leq C(1+L)^2 |\Phi\hat{n}|^{-1}\|\BF_n\|_{L^2(\Omega)}^2
	\end{aligned}
	\ee
	and
	\be \label{5-3-98}
	\begin{aligned}
		\left\|\nabla(\omega_n e^{i\hat{n}x})\right\|_{L^2(\Omega)}^2 \leq&
		CL\inte \left|\psi_n^{(3)}\right|^2+\hat{n}^2|\psi_n''|^2+\hat{n}^4|\psi_n'|^2
		+\hat{n}^6|\psi_n|^2\,dy \\
		\leq& C(1+L)^{\frac53} |\Phi\hat{n}|^{\frac16}\|\BF_n\|^2_{L^2(\Omega)}.
	\end{aligned}
	\ee
	The estimate \eqref{5-3-97} is exactly \eqref{est5-16}. Substituting \eqref{5-3-97}-\eqref{5-3-98} into \eqref{5-3-96} yields \eqref{est5-17}. Finally, applying interpolation inequality gives
	\begin{equation*}
		\|\Bv_n\|_{H^{\frac53}(\Omega)} \leq \|\Bv_n\|_{L^2(\Omega)}^\frac{1}{6}\|\Bv_n\|_{H^2(\Omega)}^\frac{5}{6}\leq C(1+L)^\frac{31}{36} |\Phi\hat{n}|^{-\frac{1}{72}} \|\BF_n\|_{L^2(\Omega)} \leq  C_2 \|\BF_n\|_{L^2(\Omega)},
	\end{equation*}
	where the assumption $\Phi \geq \tilde{C} (1 + L)^{63}$ has been used. This finishes the proof of the proposition.
\end{proof}

\begin{remark}\label{uniformL-1}
	The constant $C$ appeared in \eqref{5-3-96} is independent of $L$. Indeed, the regularity theory for the elliptic equations gives that for any $k\in \mathbb{R}$, one has
	\begin{equation*}
		\|\Bv_n\|_{H^2(\Omega_{k-1,k})}^2 \leq C\left(\left\|\nabla(\omega_n e^{i\hat{n}x})\right\|_{L^2(\Omega_{k-\frac32,k+\frac12})}^2+\| \Bv_n\|_{L^2(\Omega_{k-\frac32,k+\frac12}) }^2\right),
	\end{equation*}
	where $\Omega_{a,b}=\{(x,y):~x\in (a,b),~y\in (-1,1)\}$ and $C$ is a uniform constant independent of $k$. Then for $L\ge 1$, we denote $N=[\pi L]$. Then one has
	\begin{equation}\label{largeL}
		\begin{aligned}
			&\|\Bv_n\|_{H^2(\Omega)}^2\leq \sum_{k=-N}^{N+1}\|\Bv_n\|_{H^2(\Omega_{k-1,k})}^2\\
			\leq& C\sum_{k=-N}^{N+1}\left(\left\|\nabla(\omega_n e^{i\hat{n}x})\right\|_{L^2(\Omega_{k-\frac32,k+\frac12})}^2+\| \Bv_n\|_{L^2(\Omega_{k-\frac32,k+\frac12}) }^2\right)\\
			\leq& 2C\left(\left\|\nabla(\omega_n e^{i\hat{n}x})\right\|_{L^2(\Omega_{-N-\frac32,N+\frac32})}^2+\| \Bv_n\|_{L^2(\Omega_{-N-\frac32,N+\frac32}) }^2\right)\\
			\leq &6C\left(\left\|\nabla(\omega_n e^{i\hat{n}x})\right\|_{L^2(\Omega)}^2+\| \Bv_n\|_{L^2(\Omega) }^2\right),
		\end{aligned}
	\end{equation}
	since $\Omega$ is periodic. When $0<L<1$, there exists an integer $M$ such $ML>1$. Noting that $\Omega$ can also be viewed as a periodic channel $\tilde{\Omega}$ with period $2\pi ML$, then it  follows from \eqref{largeL} that
	\begin{equation*}
		\begin{aligned}
			M\|\Bv_n\|_{H^2(\Omega)}^2=&\|\Bv_n\|_{H^2(\tilde{\Omega})}^2\leq 6C\left(\left\|\nabla(\omega_n e^{i\hat{n}x})\right\|_{L^2(\tilde{\Omega})}^2+\| \Bv_n\|_{L^2(\tilde{\Omega}) }^2\right)\\
			\leq&6MC\left(\left\|\nabla(\omega_n e^{i\hat{n}x})\right\|_{L^2(\Omega)}^2+\| \Bv_n\|_{L^2(\Omega) }^2\right).
		\end{aligned}
	\end{equation*}
	This, together with \eqref{largeL}, shows that the constant $C$ appeared in \eqref{5-3-96} is independent of $L$.
\end{remark}

\subsection{Uniform estimates for the case with large flux and high frequency}
In this subsection, we give the uniform estimates for the solutions of \eqref{stream}-\eqref{streamBC} with respect to the flux $\Phi$ when the flux is large and the frequency is high.

\begin{pro}\label{highstream}
	Assume that $\epsilon_1^2 \Phi \geq 276$ and $|n| \geq \epsilon_1 L \sqrt{\Phi}$ for some constant $\epsilon_1\in (0,1)$. Let $\psi_n$ be a smooth solution to the problem \eqref{stream}--\eqref{streamBC}, then one has
	\begin{equation}\label{est5-18}\begin{aligned}
			\inte \Phi|\hat{n}|(1-y^2)\left|\psi_n'\right|^2+\Phi|\hat{n}|^3(1-y^2)|\psi_n|^2\,dy&\\
			+\inte |\psi_n''|^2+\hat{n}^2\left|\psi_n'\right|^2 +\hat{n}^4|\psi_n|^2\,dy&\le C(\epsilon_1)|\hat{n}|^{-2}\inte |\BF_n|^2\,dy.
	\end{aligned}\end{equation}
	Here the constant $C(\epsilon_1)$ depends only on $\epsilon_1$.
\end{pro}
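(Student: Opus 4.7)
The strategy is to test the equation \eqref{stream} against $\overline{\psi_n}$ in $L^2(-1,1)$ and carefully split the resulting identity into real and imaginary parts, exploiting the fact that $U(y)=\tfrac{3\Phi}{4}(1-y^2)$ is real and $U''=-\tfrac{3\Phi}{2}$ is constant. After integrating by parts with $\psi_n(\pm 1)=\psi_n'(\pm 1)=0$, the real part yields
\[
A:=\int_{-1}^1 |\psi_n''|^2+2\hat{n}^2|\psi_n'|^2+\hat{n}^4|\psi_n|^2\,dy = \hat{n}\,\Im\!\int_{-1}^1 U'\psi_n'\overline{\psi_n}\,dy - \Re\!\int_{-1}^1 f_n\overline{\psi_n}\,dy,
\]
while the imaginary part gives
\[
\tfrac{3|\hat{n}|\Phi}{4}\!\int_{-1}^1 (1-y^2)(|\psi_n'|^2+\hat{n}^2|\psi_n|^2)\,dy = \tfrac{3|\hat{n}|\Phi}{4}\!\int_{-1}^1 |\psi_n|^2\,dy - \mathrm{sgn}(\hat{n})\,\Im\!\int_{-1}^1 f_n\overline{\psi_n}\,dy.
\]

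The key step is to dispose of the unweighted term $\int|\psi_n|^2\,dy$ on the right-hand side of the imaginary identity. Because $\psi_n(\pm 1)=\psi_n'(\pm 1)=0$, two successive integrations from the endpoints give the pointwise bound $|\psi_n(y)|\leq \tfrac{2}{3}(1-|y|)^{3/2}\|\psi_n''\|_{L^2}$. Splitting $[-1,1]$ into the bulk $|y|\leq \sqrt{1-\eta}$ (where $y^2\leq \eta^{-1}(1-y^2)$) and a thin boundary layer of width $\sim\eta$, and choosing $\eta=2/\hat{n}^2$ (legitimate since $\hat{n}^2\geq \epsilon_1^2\Phi\geq 276$), I would deduce
\[
\int_{-1}^1 |\psi_n|^2\,dy \;\leq\; \Bigl(1+\tfrac{1}{\eta}\Bigr)\!\int_{-1}^1 (1-y^2)|\psi_n|^2\,dy + C\eta^4\|\psi_n''\|_{L^2}^2.
\]
Substituting back and rearranging absorbs the first right-hand term into the left-hand side and produces the clean weighted estimate
\[
\tfrac{3|\hat{n}|\Phi}{4}\!\int (1-y^2)|\psi_n'|^2\,dy + \tfrac{3|\hat{n}|^3\Phi}{16}\!\int (1-y^2)|\psi_n|^2\,dy \;\leq\; \tfrac{C}{\epsilon_1^2|\hat{n}|^5}A + \bigl|\Im\!\int f_n\overline{\psi_n}\,dy\bigr|,
\]
where I used $\Phi\leq \hat{n}^2/\epsilon_1^2$ in the last step.

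For the real identity, integration by parts on $f_n=i\hat{n}F_{2,n}-F_{1,n}'$ combined with the definition of $A$ gives $|\int f_n\overline{\psi_n}\,dy|\leq \sqrt{2}|\hat{n}|^{-1}\|\BF_n\|_{L^2}A^{1/2}$. The cross term $\hat{n}\Im\int U'\psi_n'\overline{\psi_n}\,dy$ is bounded by $\tfrac{3|\hat{n}|\Phi}{2}\int |y||\psi_n'||\psi_n|\,dy$; here I would invoke the boundary-layer decomposition once more to write $\|\psi_n\|_{L^2}^2 \leq \hat{n}^2\int(1-y^2)|\psi_n|^2\,dy + C|\hat{n}|^{-8}A$ (and analogously for $\|\psi_n'\|_{L^2}^2$), thereby trading the $L^2$ norms against weighted norms already controlled by the displayed weighted estimate, at the cost of a small remainder $C\epsilon_1^{-2}|\hat{n}|^{-2}A$. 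Adding the real identity to the weighted estimate and invoking Young's inequality to absorb the small multiples of $A$ and $A^{1/2}\|\BF_n\|_{L^2}$ into the left-hand side yields exactly \eqref{est5-18}.

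The main obstacle is the cross term in the real identity: a naive Cauchy–Schwarz produces a coefficient on $A$ of the form $3/(4\epsilon_1^2)$, which exceeds $1$ when $\epsilon_1$ is small and so prevents direct absorption. The resolution is the two-scale boundary-layer decomposition described above, which converts $\|\psi_n\|_{L^2}\|\psi_n'\|_{L^2}$ into (weighted norms)+(small fraction of $A$) and closes the loop through the imaginary-part estimate. The specific threshold $\epsilon_1^2\Phi\geq 276$ is exactly what makes the numerical constants arising from Young's inequality in this absorption scheme strictly less than the corresponding coefficients on the left, producing a constant $C(\epsilon_1)$ depending only on $\epsilon_1$.
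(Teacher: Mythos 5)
Your skeleton coincides with the paper's: test \eqref{stream} against $\overline{\psi_n}$, split into real and imaginary parts, trade the unweighted $\int_{-1}^1|\psi_n|^2\,dy$ for the weighted integrals (your two-scale substitute for the paper's Hardy--Littlewood--P\'olya inequality, Lemma \ref{lemmaHLP}, is adequate for that step), and close with $|\int_{-1}^1 f_n\overline{\psi_n}\,dy|\le |\hat{n}|^{-1}\|\BF_n\|_{L^2}A^{1/2}$ and Young. The gap is in the cross term of the real identity. Set $W_0=\int_{-1}^1(1-y^2)|\psi_n|^2\,dy$ and $W_1=\int_{-1}^1(1-y^2)|\psi_n'|^2\,dy$. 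Your bounds $\|\psi_n\|_{L^2}^2\le \hat{n}^2W_0+C|\hat{n}|^{-8}A$ and $\|\psi_n'\|_{L^2}^2\le \hat{n}^2W_1+C|\hat{n}|^{-4}A$ make the leading contribution to $\tfrac{3\Phi|\hat{n}|}{2}\|\psi_n'\|_{L^2}\|\psi_n\|_{L^2}$ of size $C\,\Phi|\hat{n}|^{3}W_1^{1/2}W_0^{1/2}$, while your weighted (imaginary-part) estimate controls only $\Phi|\hat{n}|W_1+\Phi|\hat{n}|^{3}W_0$. Since $\Phi|\hat{n}|^{3}W_1^{1/2}W_0^{1/2}=|\hat{n}|\,(\Phi|\hat{n}|W_1)^{1/2}(\Phi|\hat{n}|^{3}W_0)^{1/2}$, the conversion costs an extra factor $|\hat{n}|$: the best your scheme yields is $C|\hat{n}|\,|\int_{-1}^1 f_n\overline{\psi_n}\,dy|\le C\|\BF_n\|_{L^2}A^{1/2}$, hence only $A\le C\|\BF_n\|_{L^2}^2$ and only $|\hat{n}|^{-1}$ for the weighted terms --- strictly weaker than \eqref{est5-18}, which requires the factor $|\hat{n}|^{-2}$.

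There is a second, independent problem: your ``small remainder $C\epsilon_1^{-2}|\hat{n}|^{-2}A$'' is not small under the hypotheses. The assumptions allow $|\hat{n}|^2=\epsilon_1^2\Phi=276$ with $\epsilon_1$ arbitrarily small, so $\epsilon_1^{-2}|\hat{n}|^{-2}=(276\,\epsilon_1^{2})^{-1}\to\infty$; absorption coefficients must be universal constants less than one, and only the coefficient of $\|\BF_n\|_{L^2}^2$ is permitted to depend on $\epsilon_1$. Re-tuning the two layer widths cannot cure both defects simultaneously (making the product of the bulk coefficients of order $\hat{n}^{-2}$ forces layer remainders whose coefficients carry negative powers of $\epsilon_1$). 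The paper's resolution is exactly at this point: it uses the weighted interpolation inequality of Lemma \ref{weightinequality}, $\int_{-1}^1|g|^2\,dy\le C\big(\int_{-1}^1(1-y^2)|g|^2\,dy\big)^{2/3}\big(\int_{-1}^1|g'|^2\,dy\big)^{1/3}+C\int_{-1}^1(1-y^2)|g|^2\,dy$, applied to $\psi_n$ and $\psi_n'$; after inserting $W_1\le C(\Phi|\hat{n}|)^{-1}|\int f_n\overline{\psi_n}\,dy|$ and $W_0\le C(\Phi|\hat{n}|^{3})^{-1}|\int f_n\overline{\psi_n}\,dy|$, the fractional exponents let Young's inequality put universal coefficients $\tfrac14$ on $\int_{-1}^1|\psi_n''|^2\,dy$ and $\int_{-1}^1\hat{n}^2|\psi_n'|^2\,dy$ and push every factor $\Phi^{1/2}|\hat{n}|^{-1}\le\epsilon_1^{-1}$ onto $|\int f_n\overline{\psi_n}\,dy|$, which is why the paper retains the $|\hat{n}|^{-1}$ gain and obtains $C(\epsilon_1)|\hat{n}|^{-2}$ in \eqref{est5-18}. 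You would need to replace your two-scale pointwise decomposition by an inequality of this interpolation type (or reproduce its $2/3$--$1/3$ balance) for the cross-term step to close.
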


\begin{proof}
	Multiplying the equation \eqref{stream} by $\overline{\psi_n }$ and integrating the resulting equation over $[-1,1]$ yield
	\begin{equation}\label{5-3-21}
		\int_{-1}^1 \hat{n}^4|\psi_n |^2+2\hat{n}^2|\psi_n '|^2+|\psi''_n |^2\,d y=-\Re\int_{-1}^1 f_n\overline{\psi_n }\,dy+\Im\int_{-1}^1 \hat{n}U'\psi_n '\overline{\psi_n }\,dy
	\end{equation}
	and
	\begin{equation}\label{5-3-22}
		\frac{3\Phi\hat{n}}{4}\int_{-1}^1 \hat{n}^2|\psi_n |^2(1-y^2)+|\psi_n '|^2(1-y^2)\,d y=-\Im\int_{-1}^1 f_n\overline{\psi_n }\,dy+\frac{3\Phi\hat{n}}{4}\int_{-1}^1|\psi_n |^2\, dy.
	\end{equation}
	The equality \eqref{5-3-21} gives
	\begin{equation}\label{5-4-1}
		\inte |\psi_n''|^2+ \hat{n}^2\left|\psi_n'\right|^2+ \hat{n}^4|\psi_n|^2\,dy
		\le C\left| \int_{-1}^1 f_n \overline{\psi_n}  \, dy  \right| +C\Phi |\hat{n}|\inte \left|y\psi_n'\overline{\psi_n}\right|\,dy.
	\end{equation}
	Noting that $\hat{n}^2= \frac{n^2}{L^2}\ge \epsilon_1^2\Phi\geq 276$, one uses Lemma \ref{lemmaHLP} to obtain
	\begin{equation}\nonumber
		\int_{-1}^1|\psi_n |^2\, dy\leq \frac13 \int_{-1}^1 \hat{n}^2|\psi_n |^2(1-y^2)+|\psi_n '|^2(1-y^2)\,dy.
	\end{equation}
	This, together with \eqref{5-3-22}, yields
	\begin{equation}\label{5-4-2}
		\begin{aligned}
			\Phi |\hat{n}|\inte \left|\psi_n\right|^2+ \left|\psi_n'\right|^2(1-y^2)+\hat{n}^2 |\psi_n|^2(1-y^2)\,dy
			\le C\left| \int_{-1}^1 f_n \overline{\psi}_n \, dy  \right| .
	\end{aligned}\end{equation}
	It follows from	Lemma \ref{lemmaHLP} that
	\be\nonumber
	\ba
	& \Phi |\hat{n}| \int_{-1}^1 | y \psi_n' \bar{\psi}_n| \, dy
	\leq  C \Phi |\hat{n}| \left( \int_{-1}^1 |\psi_n'|^2 \, dy  \right)^{\frac12} \left( \int_{-1}^1 |\psi_n|^2 \, dy \right)^{\frac12} \\
	\leq & C \Phi |\hat{n}| \left[\left( \int_{-1}^1 |\psi_n''|^2 \, dy \right)^{\frac16} \left( \int_{-1}^1 (1 - y^2) |\psi_n'|^2 \, dy \right)^{\frac13}+\left( \int_{-1}^1 (1 - y^2) |\psi_n'|^2 \, dy \right)^{\frac12}\right]\\
	&\ \ \ \ \ \ \ \times \left[\left( \int_{-1}^1 |\psi_n'|^2 \, dy \right)^{\frac16} \left(  \int_{-1}^1 (1 - y^2) |\psi_n|^2 \, dy \right)^{\frac13} +\left( \int_{-1}^1 (1 - y^2) |\psi_n|^2 \, dy \right)^{\frac12}\right].
	\ea\ee
	Using \eqref{5-4-2} and H\"{o}lder inequality gives
	\be\label{5-4-3}
	\ba
	& \Phi |\hat{n}| \int_{-1}^1 | y \psi_n' \bar{\psi}_n| \, dy \\
	\leq & C \Phi |\hat{n}| \left[\left( \int_{-1}^1 |\psi_n''|^2 \, dy \right)^{\frac16} \left( \Phi|\hat{n}|\right)^{-\frac13}\left| \int_{-1}^1 f_n \overline{\psi}_n \, dy  \right|^\frac13+\left( \Phi|\hat{n}|\right)^{-\frac12}\left| \int_{-1}^1 f_n \overline{\psi}_n \, dy  \right|^\frac12\right]\\
	&\ \ \ \ \ \ \ \times \left[\left( \int_{-1}^1 \hat{n}^2 |\psi_n'|^2 \, dy \right)^{\frac16}\left( \Phi|\hat{n}|^4\right)^{-\frac13}\left| \int_{-1}^1 f_n \overline{\psi}_n \, dy  \right|^\frac13+\left( \Phi|\hat{n}|^3\right)^{-\frac12}\left| \int_{-1}^1 f_n \overline{\psi}_n \, dy  \right|^\frac12\right].
	\ea\ee
	Using H\"{o}lder inequality gives
	\be
	\ba
	& \Phi |\hat{n}| \int_{-1}^1 | y \psi_n' \bar{\psi}_n| \, dy \\
	\leq & \frac14 \int_{-1}^1 |\psi_n''|^2 \, dy + \frac14 \int_{-1}^1 \hat{n}^2 |\psi_n'|^2 \, dy +
	C (\Phi^{\frac12} |\hat{n}|^{-1}+\Phi^{\frac15} |\hat{n}|^{-1}+|\hat{n}|^{-1}) \left|\int_{-1}^1 f_n \bar{\psi}_n \, dy  \right|\\
	\leq & \frac14 \int_{-1}^1 |\psi_n''|^2 \, dy + \frac14 \int_{-1}^1 \hat{n}^2 |\psi_n'|^2 \, dy +
	C \left|\int_{-1}^1 f_n \bar{\psi}_n \, dy  \right|.
	\ea
	\ee
	Taking \eqref{5-4-3} into \eqref{5-4-1} yields
	\be \nonumber
	\ba
	& \inte |\psi_n''|^2+ \hat{n}^2\left|\psi_n'\right|^2+ \hat{n}^4|\psi_n|^2\,dy\le  C\left|\int_{-1}^1 f_n \bar{\psi}_n \, dy  \right| \\
	\leq & C|\hat{n}|^{-1}  \left( \int_{-1}^1 |\BF_n|^2 \, dy  \right)^{\frac12} \left( \int_{-1}^1 \hat{n}^4 |\psi_n|^2 + \hat{n}^2|\psi_n'|^2 \, dy \right)^{\frac12}.
	\ea
	\ee
	By Young's inequality, one has
	\be \label{5-4-5}
	\inte |\psi_n''|^2+ \hat{n}^2\left|\psi_n'\right|^2+ \hat{n}^4|\psi_n|^2\,dy
	\leq C |\hat{n}|^{-2} \int_{-1}^1 |\BF_n|^2 \, dy .
	\ee
	Substituting \eqref{5-4-5} into \eqref{5-4-2} gives the inequality \eqref{est5-18}. This finishes the proof of the proposition.
\end{proof}

Following the proof in \cite[Appendix B]{SWX},  a priori estimates established in Proposition \ref{highstream} together with the Galerkin method give the existence of  solution for the problem \eqref{stream}-\eqref{streamBC} for any $|\hat{n}|\ge \epsilon_1 \sqrt{\Phi}\ge \sqrt{276}$.

\begin{pro}\label{highv}
	Assume that $\epsilon_1^2 \Phi \geq 276$ and  $|n|\ge \epsilon_1 L\sqrt{\Phi}$ for some constant $\epsilon_1\in (0,1)$. The corresponding velocity $\Bv_n$ satisfies
	\begin{equation}\nonumber
		\|\Bv_n\|_{L^2(\Omega)}\le C\Phi^{-1}\|\BF_n\|_{L^2(\Omega)},
	\end{equation}
	\begin{equation}\label{est5-21}
		\|\Bv_n\|_{H^2(\Omega)}\le C(1+\Phi^\frac14)\|\BF_n\|_{L^2(\Omega)},
	\end{equation}
	and
	\begin{equation}\nonumber
		\|\Bv_n\|_{H^\frac53(\Omega)}\le C_3\|\BF_n\|_{L^2(\Omega)},
	\end{equation}
	for some positive constants $C$ and $C_3$ independent of flux $\Phi$, $n,L$, and $\BF_n$.
\end{pro}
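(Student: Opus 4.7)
The plan is to mirror the proof of Proposition \ref{mediumv}: transfer the stream-function estimates of Proposition \ref{highstream} to velocity norms via elliptic regularity for the vector Laplacian on $\Bv_n$, and then close the $H^{5/3}$-estimate by Sobolev interpolation. Starting from $v_{1,n}=-\psi_n'$ and $v_{2,n}=i\hat{n}\psi_n$, one has
\begin{equation*}
\|\Bv_n\|_{L^2(\Omega)}^2 = 2L\pi\int_{-1}^1\left(|\psi_n'|^2+\hat{n}^2|\psi_n|^2\right)dy, \qquad \|\nabla\Bv_n\|_{L^2(\Omega)}^2 = 2L\pi\int_{-1}^1\left(|\psi_n''|^2+2\hat{n}^2|\psi_n'|^2+\hat{n}^4|\psi_n|^2\right)dy.
\end{equation*}
Dividing \eqref{est5-18} by $\hat{n}^2$ and invoking $|\hat{n}|\geq \epsilon_1\sqrt{\Phi}$ immediately yields $\|\Bv_n\|_{L^2(\Omega)}\leq C|\hat{n}|^{-2}\|\BF_n\|_{L^2(\Omega)}\leq C\Phi^{-1}\|\BF_n\|_{L^2(\Omega)}$, and the estimate \eqref{est5-18} itself combined with the same hypothesis gives $\|\Bv_n\|_{H^1(\Omega)}\leq C\Phi^{-1/2}\|\BF_n\|_{L^2(\Omega)}$.

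For the $H^2$-estimate I would use the elliptic bound \eqref{5-3-96}, namely
\begin{equation*}
\|\Bv_n\|_{H^2(\Omega)} \leq C\left\|\nabla(\omega_n e^{i\hat{n}x})\right\|_{L^2(\Omega)}+C\|\Bv_n\|_{L^2(\Omega)},
\end{equation*}
which reduces the problem to bounding $\int(|\omega_n'|^2+\hat{n}^2|\omega_n|^2)\,dy$. Since $\omega_n=\psi_n''-\hat{n}^2\psi_n$, multiplying \eqref{est5-18} through by $\hat{n}^2$ immediately yields $\int\hat{n}^2|\omega_n|^2\,dy\leq C\|\BF_n\|_{L^2(-1,1)}^2$. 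To control $\int|\omega_n'|^2\,dy$, I would test the vorticity equation
\begin{equation*}
-\omega_n''+(\hat{n}^2+i\hat{n}U)\omega_n = f_n+i\hat{n}U''\psi_n
\end{equation*}
against $\overline{\omega_n}$, splitting $f_n=i\hat{n}F_{2,n}-F_{1,n}'$ and integrating the $F_{1,n}'$ piece by parts to shift the derivative onto $\omega_n$, so that only $\|\BF_n\|_{L^2}$ enters. The resulting boundary contributions $[\omega_n'\overline{\omega_n}]_{-1}^1$ and $[F_{1,n}\overline{\omega_n}]_{-1}^1$ are absorbed via standard trace and interpolation estimates together with the bounds supplied by Proposition \ref{highstream}, leading to $\|\Bv_n\|_{H^2(\Omega)} \leq C(1+\Phi^{1/4})\|\BF_n\|_{L^2(\Omega)}$ after using $\hat{n}^2\geq \epsilon_1^2\Phi$ to dominate the convective term $i\hat{n}U\omega_n$ by the elliptic piece $\hat{n}^2\omega_n$.

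The $H^{5/3}$-bound then follows by Sobolev interpolation between the already established $H^1$- and $H^2$-estimates:
\begin{equation*}
\|\Bv_n\|_{H^{5/3}(\Omega)} \leq \|\Bv_n\|_{H^1(\Omega)}^{1/3}\|\Bv_n\|_{H^2(\Omega)}^{2/3} \leq C\Phi^{-1/6}\bigl(1+\Phi^{1/4}\bigr)^{2/3}\|\BF_n\|_{L^2(\Omega)} \leq C_3\|\BF_n\|_{L^2(\Omega)},
\end{equation*}
uniformly in $L$, $n$, and $\Phi$. The main obstacle is the bound on $\int|\omega_n'|^2\,dy$: unlike $\psi_n$ and $\psi_n'$, the vorticity $\omega_n$ does not vanish on $\partial\Omega$, so the energy identity for the vorticity equation produces nonzero traces $\psi_n^{(3)}(\pm 1)\overline{\psi_n''(\pm 1)}$, and the distributional nature of $F_{1,n}'$ forces an additional integration by parts carrying its own boundary term. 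Controlling these traces in terms of $\|\BF_n\|_{L^2(\Omega)}$ via the stream-function estimates of Proposition \ref{highstream} is the technical heart of the argument, and it is here that the dominance $\hat{n}^2\geq \epsilon_1^2\Phi$ of the elliptic part over the convective part becomes indispensable.
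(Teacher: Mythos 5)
Your $L^2$- and $H^1$-bounds and the final interpolation are fine, but the $H^2$ step contains a genuine gap, and it sits exactly where you locate "the technical heart". Testing the vorticity equation $-\omega_n''+(\hat{n}^2+i\hat{n}U)\omega_n=f_n+i\hat{n}U''\psi_n$ against $\overline{\omega_n}$ produces the boundary term $\Re\,[\omega_n'\overline{\omega_n}]_{-1}^{1}=\Re\,[\psi_n^{(3)}(\pm1)\overline{\psi_n''(\pm1)}]$, and (after the integration by parts you propose for $F_{1,n}'$) also $[F_{1,n}\overline{\omega_n}]_{-1}^{1}$. Neither can be "absorbed via standard trace and interpolation estimates together with the bounds supplied by Proposition \ref{highstream}": a trace bound for $\omega_n'(\pm1)$ requires control of $\omega_n$ in $H^2$, i.e.\ of $\psi_n^{(4)}$, which is not available when only $\BF_n\in L^2(\Omega)$ (indeed $f_n=i\hat{n}F_{2,n}-F_{1,n}'$ is merely in $H^{-1}$, so $\psi_n\notin H^4$ in general), and $F_{1,n}$ has no trace at $y=\pm1$ at all, so its boundary contribution cannot be dominated by $\|\BF_n\|_{L^2}$. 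This is precisely the no-slip obstruction that forced the explicit boundary-layer construction in the intermediate-frequency regime; Proposition \ref{highstream} deliberately stops at second-order derivatives of $\psi_n$, so third-order information at the boundary is simply not in hand. Note also that your claimed source of the factor $\Phi^{1/4}$ is not right: the convective term $i\hat{n}U|\omega_n|^2$ is purely imaginary and drops out of the real part of the energy identity, so "dominating it by $\hat{n}^2\omega_n$" is not where the loss occurs.

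The paper closes the $H^2$ estimate by a different mechanism that avoids third derivatives of $\psi_n$ altogether: it writes the $n$-th mode as a Stokes system $-\Delta\Bv_n+\nabla P_n=\BF_n-\boldsymbol{U}\partial_x\Bv_n+v_{2,n}e^{i\hat{n}x}\boldsymbol{U}'$ and applies Stokes regularity (Galdi, Lemma VI.1.2), so that only $L^2$-norms of the right-hand side are needed. The dangerous term $\Phi\|(1-y^2)\partial_x\Bv_n\|_{L^2(\Omega)}$ is then controlled by the \emph{weighted} part of \eqref{est5-18}, namely $\int_{-1}^1\Phi|\hat{n}|(1-y^2)|\psi_n'|^2+\Phi|\hat{n}|^3(1-y^2)|\psi_n|^2\,dy\le C|\hat{n}|^{-2}\int_{-1}^1|\BF_n|^2\,dy$, which is exactly what produces the factor $\Phi^{1/4}$ in \eqref{est5-21}; the term $\Phi\|v_{2,n}e^{i\hat{n}x}\|_{L^2}$ is handled by your $L^2$-bound. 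If you want to salvage your route you would have to either construct and subtract a boundary-layer corrector in the high-frequency regime as well, or switch, as the paper does, to an estimate whose right-hand side involves no boundary values of $\omega_n$ or $F_{1,n}$.
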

\begin{proof}
	First, by virtue of \eqref{est5-18}, one has
	\begin{equation}\nonumber
		\begin{aligned}
			\|\Bv_n\|_{L^2(\Omega)}\le&CL^\frac12\left(\inte \hat{n}^2|\psi_n|^2+\left|\psi_n'\right|^2 \,dy\right)^\frac12\\
			\le&C\hat{n}^{-2}L^\frac12\left(\inte |\BF_n|^2\,dy\right)^\frac12\le C\Phi^{-1}\|\BF_n\|_{L^2(\Omega)}.
		\end{aligned}
	\end{equation}
	Since $\psi_n$ is a solution to the problem \eqref{stream}-\eqref{streamBC}, direct computations yield
	\be \nonumber
	{\rm curl}~\left( (\boldsymbol{U}\cdot \nabla ) \Bv_n + (\Bv_n \cdot \nabla) \boldsymbol{U}  \right) - {\rm curl}~(\Delta \Bv_n) = {\rm curl}~\BF_n.
	\ee
	Therefore, for each $n$, there exists some function $P_n$ with $\nabla P_n\in L^2(\Omega)$ such that
	\be \nonumber
	(\boldsymbol{U}\cdot \nabla ) \Bv_n + (\Bv_n \cdot \nabla) \boldsymbol{U} - \Delta \Bv_n + \nabla P_n = \BF_n.
	\ee
	Then $\Bv_n$ satisfies the equation
	\be \label{stokes}
	\left\{  \ba
	& -\Delta \Bv_n+ \nabla P_n = \BF_n-\boldsymbol{U}\partial_x  \Bv_n + v_{2,n}e^{i\hat{n} x}  \boldsymbol{U}' , \ \ \ \mbox{in}\ \ \Omega, \\
	& {\rm div}~\Bv_n = 0,\ \ \ \ \ \ \ \ \ \ \ \ \ \ \ \ \ \ \ \ \ \ \ \ \ \ \ \ \ \ \ \ \ \ \ \ \ \ \ \ \ \ \ \ \ \ \mbox{in} \ \ \Omega.
	\ea  \right.
	\ee
	According to the regularity theory for Stokes equations (\hspace{1sp}\cite[Lemma VI.1.2]{Galdi}), one has
	\begin{equation}\label{5-4-8}
		\begin{aligned}
			\|\Bv_n \|_{H^2(\Omega)}\le& C(\|\BF_n\|_{L^2(\Omega)} +\Phi \|(1 - y^2) \partial_x \Bv_n\|_{L^2(\Omega)} + \Phi \|v_{2,n}e^{i\hat{n} x}\|_{L^2(\Omega)} +  \|\Bv_n\|_{H^1(\Omega)}).
	\end{aligned}\end{equation}
	Similar to Remark \ref{uniformL-1}, one can also show that the constant $C$ appeared in \eqref{5-4-8} is independent of $L$.
	
	It follows from \eqref{est5-18} that one has
	\begin{equation}\label{5-4-9}
		\begin{aligned}
			&\Phi\|(1-y^2)\partial_x \Bv_n \|_{L^2(\Omega)}\\
			\le& C\Phi L^\frac12\left(\inte \hat{n}^4(1-y^2)^2|\psi_n|^2+\hat{n}^2(1-y^2)^2\left|\psi_n'\right|^2\,dy\right)^\frac12\\
			\le&C|\Phi\hat{n}|^\frac12L^\frac12\left(\inte \Phi|\hat{n}|^3(1-y^2)|\psi_n|^2+\Phi|\hat{n}|(1-y^2)\left|\psi_n'\right|^2\,dy\right)^\frac12\\
			\le&C\Phi^\frac12|\hat{n}|^{-\frac12}L^\frac12\left(\inte |\BF_n|^2\,dy\right)^\frac12\le C\Phi^\frac14\|\BF_n\|_{L^2(\Omega)}
	\end{aligned}\end{equation}
	and
	\begin{equation}\label{5-4-10}
		\begin{aligned}
			\Phi\|v_{2,n}e^{i\hat{n}x}\|_{L^2(\Omega)}\le& C\Phi\|\Bv_n\|_{L^2(\Omega)}\le C\|\BF_n\|_{L^2(\Omega)}.
	\end{aligned}\end{equation}
	By Poincar\'{e}'s inequality and Proposition \ref{highstream}, one has
	\begin{equation}\label{5-4-11}
		\begin{aligned}
			\|\Bv_n \|_{H^1(\Omega)}\le C\|\nabla\Bv_n \|_{L^2(\Omega)}
			\le & CL^\frac12\left(\inte \hat{n}^4|\psi_n|^2+\hat{n}^2\left|\psi_n'\right|^2 +|\psi_n''|^2\,dy\right)^\frac12\\
			\le & C \Phi^{-\frac12}\|\BF_n\|_{L^2(\Omega)}.
	\end{aligned}\end{equation}
	Combining \eqref{5-4-8}-\eqref{5-4-11} yields \eqref{est5-21}. Finally, using interpolation between $H^2(\Omega)$ and $H^1(\Omega)$ gives
	\begin{equation}\nonumber
		\|\Bv_n \|_{H^\frac53(\Omega)}\le C\|\Bv_n \|_{H^1(\Omega)}^\frac13\|\Bv_n \|_{H^2(\Omega)}^\frac23\le C_3\|\BF_n \|_{L^2(\Omega)}.
	\end{equation}
	Hence the proof of the proposition is completed.
\end{proof}

As shown in the proof of Proposition \ref{highv}, for each $n$, there exists a function $P_n$ with $\nabla P_n\in L^2(\Omega)$ such that
\be \nonumber
(\boldsymbol{U}\cdot \nabla ) \Bv_n + (\Bv_n \cdot \nabla) \boldsymbol{U} - \Delta \Bv_n + \nabla P_n = \BF_n.
\ee
According to Propositions \ref{0mode}, \ref{mediumv}, and \ref{highv},  for $\Phi \geq \tilde{C} ( 1 + L)^{63}$, one has
\be \nonumber
\|\Bv\|_{H^2(\Omega)} \leq C\max \{ (1+ L)^{\frac56} |\Phi|^{\frac{1}{8}} , \, \Phi^{\frac14}   \} \|\BF\|_{L^2(\Omega)}\leq C \Phi^{\frac14}
\| \BF\|_{L^2(\Omega)}.
\ee
This finishes the proof of Theorem \ref{thm1}.


\section{The nonlinear problem}\label{sec-nonlinear}
In this section, we prove the existence and uniqueness of  solution to the nonlinear problem \eqref{model11}-\eqref{model11'}.
Let $\Bv$ be a periodic solution of the nonlinear system \eqref{model11}-\eqref{model11'}. Then for any $n\in \mathbb{Z}$, the stream function $\psi_n$ of $\Bv_n$ satisfies the following nonlinear system
\begin{equation}\label{model71}
	\begin{aligned}
		&-i\hat{n}U''\psi_n+i\hat{n}U\left(\frac{d^2}{dy^2}-\hat{n}^2\right)\psi_n-\left(\frac{d^2}{dy^2}-\hat{n}^2\right)^2\psi_n\\
		=&i\hat{n}F_{2,n}-\frac{d}{dy}F_{1,n}-\frac{d}{dy}\left(\sum\limits_{m\in \mathbb{Z}} v_{2,n-m}\omega_{m}\right)-i\hat{n}\sum\limits_{m\in \mathbb{Z}} v_{1,n-m}\omega_{m},
	\end{aligned}
\end{equation}
and the boundary conditions \eqref{streamBC}, where $\omega_n$ is defined in terms of $\psi_n$ as that in \eqref{defomegan}.

\begin{pro}
	\label{largeF}
	Assume that $\BF\in L^2(\Omega)$. For every $\Phi\ge \tilde{C}(1+L)^{63 }$, if
	\begin{equation}\nonumber
		\|\BF\|_{L^2(\Omega)}\le \Phi^\frac{1}{32},
	\end{equation}
	the nonlinear problem \eqref{model11}-\eqref{model11'} admits a unique solution $\Bv\in H^2(\Omega)$ satisfying
	{ \begin{equation}\nonumber
			\left(\left\|\Bv_0\right\|_{H^2(\Omega)}^2+ \sum\limits_{n\neq0}\left\|\Bv_n\right\|_{H^\frac53(\Omega)}^2\right)^\frac12
			\le C\|\BF\|_{L^2(\Omega)}
		\end{equation}
		and
		\begin{equation}\nonumber
			\|v_2\|_{L^2(\Omega)}\leq \Phi^{-\frac{5}{12}}, \quad
			\left\|\Bv\right\|_{H^2(\Omega)}\le C \Phi^\frac14\|\BF\|_{L^2(\Omega)},
	\end{equation}}
	where $C$ is a uniform constant.
\end{pro}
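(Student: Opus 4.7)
The plan is to set up a Banach fixed-point argument based on Theorem \ref{thm1} and the mode-wise refinements in Propositions \ref{0mode}, \ref{mediumv}, and \ref{highv}. Given $\boldsymbol{w}$ in an appropriate closed ball, let $\mathcal{T}\boldsymbol{w}$ be the unique $H^{5/3}$ solution of the linearized problem \eqref{model12}--\eqref{model11'} whose external force has been replaced by $\BF-\boldsymbol{w}\cdot\nabla\boldsymbol{w}$. Theorem \ref{thm1} makes $\mathcal{T}$ well-defined as soon as $\boldsymbol{w}\cdot\nabla\boldsymbol{w}\in L^2(\Omega)$, and a fixed point of $\mathcal{T}$ is a solution of \eqref{model11}--\eqref{model11'}. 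I would work in the set
\begin{equation*}
X=\Big\{\boldsymbol{w}\in H^2(\Omega):\ \|\boldsymbol{w}_0\|_{H^2}^2+\sum_{n\ne 0}\|\boldsymbol{w}_n\|_{H^{5/3}}^2\le (C_*\|\BF\|_{L^2})^2,\ \ \|w_2\|_{L^2}\le \Phi^{-5/12}\Big\},
\end{equation*}
where $C_*$ is the uniform constant produced by Theorem \ref{thm1} and Proposition \ref{0mode}. Via Theorem \ref{thm1} together with the mode-wise gain $\|\Bv_n\|_{L^2}\le C(1+L)|\Phi\hat n|^{-1/2}\|\BF_n\|_{L^2}$ of Proposition \ref{mediumv}, the self-map property $\mathcal{T}(X)\subset X$ reduces to showing that $\|\boldsymbol{w}\cdot\nabla\boldsymbol{w}\|_{L^2}$ is at most a fixed fraction of $\|\BF\|_{L^2}$ uniformly for $\boldsymbol{w}\in X$.

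The main work is the nonlinear estimate. The naive bound $\|\boldsymbol{w}\cdot\nabla\boldsymbol{w}\|_{L^2}\le C\|\boldsymbol{w}\|_{H^{5/3}}^2$ coming from the 2D embedding $H^{5/3}\hookrightarrow L^\infty$ is too crude, since it exploits no gain in $\Phi$ and so fails when $\|\BF\|_{L^2}\sim \Phi^{1/32}$. Instead, I would split $\boldsymbol{w}=\boldsymbol{w}_0+\tilde{\boldsymbol{w}}$ and use $w_{2,0}=0$ and $\partial_x\boldsymbol{w}_0=0$ to write
\begin{equation*}
\boldsymbol{w}\cdot\nabla\boldsymbol{w}=w_{1,0}\,\partial_x\tilde{\boldsymbol{w}}+\tilde w_2\,\boldsymbol{w}_0'+\tilde{\boldsymbol{w}}\cdot\nabla\tilde{\boldsymbol{w}}.
\end{equation*}
The first piece is bounded by $\|w_{1,0}\|_{L^\infty_y}\|\partial_x\tilde{\boldsymbol{w}}\|_{L^2}$, where the $L^\infty_y$ norm is controlled by $\|\boldsymbol{w}_0\|_{H^2}$ and the derivative $\partial_x\tilde{\boldsymbol{w}}$ inherits the factor $|\hat n|\cdot|\Phi\hat n|^{-1/2}\lesssim \Phi^{-1/4}$ from Propositions \ref{mediumv} and \ref{highv}. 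The second piece is bounded by $\|\tilde w_2\|_{L^2}\|\boldsymbol{w}_0'\|_{L^\infty_y}\le C\Phi^{-5/12}\|\BF\|_{L^2}$ using the constraint built into $X$. The third, purely fluctuating piece is handled by $\|\tilde{\boldsymbol{w}}\cdot\nabla\tilde{\boldsymbol{w}}\|_{L^2}\le \|\tilde{\boldsymbol{w}}\|_{L^\infty}\|\nabla\tilde{\boldsymbol{w}}\|_{L^2}$ and the 2D Gagliardo--Nirenberg inequality $\|\tilde{\boldsymbol{w}}\|_{L^\infty}\le C\|\tilde{\boldsymbol{w}}\|_{L^2}^{1/2}\|\tilde{\boldsymbol{w}}\|_{H^2}^{1/2}$, where the small $L^2$ factor again comes from the $|\Phi\hat n|^{-1/2}$ gain of Proposition \ref{mediumv}. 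Summing the three, I would obtain $\|\boldsymbol{w}\cdot\nabla\boldsymbol{w}\|_{L^2}\le C\Phi^{-\theta}\|\BF\|_{L^2}$ for some $\theta>0$, which closes the self-map property provided $\Phi\ge \tilde C(1+L)^{63}$.

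A parallel computation for $\mathcal{T}\boldsymbol{w}-\mathcal{T}\boldsymbol{w}'$, using the bilinearity $\boldsymbol{w}\cdot\nabla\boldsymbol{w}-\boldsymbol{w}'\cdot\nabla\boldsymbol{w}'=(\boldsymbol{w}-\boldsymbol{w}')\cdot\nabla\boldsymbol{w}+\boldsymbol{w}'\cdot\nabla(\boldsymbol{w}-\boldsymbol{w}')$ and the same three-piece split, shows that $\mathcal{T}$ is a contraction on $X$ in the natural metric. The Banach fixed-point theorem then produces a unique fixed point, which inherits the claimed $H^{5/3}$ estimate from Theorem \ref{thm1}, the $H^2$ estimate $\|\Bv\|_{H^2}\le C\Phi^{1/4}\|\BF\|_{L^2}$ from the same theorem, and the refined $\Phi^{-5/12}$ bound on $v_2$ from Proposition \ref{mediumv} applied with right-hand side $\BF-\Bv\cdot\nabla\Bv$. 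The principal obstacle is the exponent bookkeeping: every one of the three pieces above must strictly gain a positive power of $\Phi$, and this relies essentially on (i) the mode-wise $L^2$ gain $|\Phi\hat n|^{-1/2}$ of Proposition \ref{mediumv}, absent in the naive Sobolev bound, and (ii) the vanishing of $w_{2,0}$, which eliminates the one term of the nonlinearity that would otherwise not improve with $\Phi$.
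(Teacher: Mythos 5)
Your overall architecture — Picard/Banach iteration for the map $\mathcal{T}$ on a ball that encodes $L^2$-smallness of the fluctuating part — is essentially the paper's scheme (the paper runs the same iteration $\Bv^{j+1}=\mathcal{T}\Bv^{j}$ in a set $\mathcal{J}$ and proves a contraction-type estimate for uniqueness). However, the mechanism you give for the crucial nonlinear bound has a genuine gap. For the self-map property you must estimate $\|\boldsymbol{w}\cdot\nabla\boldsymbol{w}\|_{L^2}$ using \emph{only} the constraints defining $X$, yet in your first and third pieces the $\Phi$-gain is attributed to the mode-wise estimates of Propositions \ref{mediumv} and \ref{highv}. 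Those propositions concern solutions of the linear problem, i.e.\ the image $\mathcal{T}\boldsymbol{w}$; a generic $\boldsymbol{w}\in X$ carries no $|\Phi\hat n|^{-1/2}$ information, so invoking them there is circular. The only smallness at your disposal is what $X$ encodes, and your $X$ constrains just $\|w_2\|_{L^2}$, while your pieces use smallness of $\|\partial_x\tilde{\boldsymbol{w}}\|_{L^2}$ and of $\|\tilde{\boldsymbol{w}}\|_{L^2}$ (both components); worse, the third piece uses $\|\tilde{\boldsymbol{w}}\|_{H^2}$ in the Gagliardo--Nirenberg bound, and the $H^2$ norm of the fluctuation is not controlled by the membership conditions of $X$ at all, so that estimate is not uniform over the ball. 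The paper avoids this by building $v_{2,0}=0$ and $\|\Q\Bv\|_{L^2}\le \Phi^{-5/12}$ (the \emph{full} fluctuating velocity) into $\mathcal{J}$ and extracting the gain purely from H\"older and interpolation, $\|\Q\boldsymbol{w}\|_{L^6}\|\nabla\Q\boldsymbol{w}\|_{L^3}\le C\|\Q\boldsymbol{w}\|_{L^2}^{2/5}\|\Q\boldsymbol{w}\|_{H^{5/3}}^{8/5}\le C\Phi^{-1/6}\|\BF\|_{L^2}^{8/5}$ (mode by mode via Hausdorff--Young), which with $\|\BF\|_{L^2}\le\Phi^{1/32}$ is a small multiple of $\|\BF\|_{L^2}$; Propositions \ref{mediumv} and \ref{highv} are used only afterwards, to show the image again satisfies the $\Phi^{-5/12}$ constraint. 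Your argument is repairable along these lines (your first piece can also be saved from the divergence-free relation $\partial_x\tilde w_1=-\partial_y\tilde w_2$ plus interpolation, at the cost of powers of $L$, which the hypothesis $\Phi\ge\tilde C(1+L)^{63}$ absorbs), but as written the self-map step does not close. You also omit the $L$-uniformity of the embedding constants and the case $0<L<1$, which the paper handles by periodic extension.

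The contraction step is likewise more than ``a parallel computation.'' In your natural metric the term $(\boldsymbol{w}-\boldsymbol{w}')\cdot\nabla\boldsymbol{w}$ only gives a Lipschitz constant of order $\|\boldsymbol{w}\|_{H^{5/3}}\sim\|\BF\|_{L^2}\sim\Phi^{1/32}$, which is large, so $\mathcal{T}$ is not obviously a contraction there. The smallness has to come from measuring the difference of images in a weaker norm and using the gain of the \emph{linear solution operator} on that difference: the paper bounds $\|\Q(\Bv^1-\Bv^2)\|_{H^{3/2}}$ by $C\Phi^{-1/21}$ times the $L^2$ norm of the difference of nonlinearities (interpolating the $L^2$ gain $\Phi^{-10/21}$ of Proposition \ref{mediumv} against the $H^{5/3}$ bound), while the nonlinearity difference costs at most $C\Phi^{1/32}$ times the weak-norm distance, yielding the net factor $C\Phi^{-11/672}<1$. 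This exponent bookkeeping is precisely where the threshold $\Phi^{1/32}$ enters, and it is the content you would need to supply to make the fixed-point (or uniqueness) step rigorous.
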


\begin{proof}
	The proof is divided into five steps. First, we consider the case that $L \geq 1$. The existence of the solutions is established by the iteration method.
	
	{\em Step 1: Iteration scheme.}  Given $\BF\in L^2(\Omega)$, the linear problem \eqref{stream}-\eqref{streamBC} admits a unique solution $\psi_n^0$ for each $n$. The corresponding velocity field is denoted by
	\begin{equation}\nonumber
		\Bv^0=\sum\limits_{n\in\Z}v_{1,n}^0e^{i\hat{n}x}\Be_1+v_{2,n}^0e^{i\hat{n}x}\Be_2  \text{ with }
		v^0_{2,n}=i\hat{n}\psi_n^0~~\text{ and }v^0_{1,n}=-\frac{d}{dy}\psi_n^0.
	\end{equation}
	For each $j\ge 0$, let $\psi^{j+1}_n$ be the solution of the iteration problem
	\begin{equation}\label{model72}
		\left\{\begin{aligned}
			-i\hat{n}U''\psi_n^{j+1}&+i\hat{n}U\left(\frac{d^2}{dy^2}-\hat{n}^2\right)\psi^{j+1}_n-\left(\frac{d^2}{dy^2}-\hat{n}^2\right)^2\psi^{j+1}_n\\
			=&i\hat{n} (F_{2,n}+F^j_{2,n})-\frac{d}{dy}(F_{1,n}+F_{1,n}^j),\\
			\psi_n^{j+1}(\pm1)=&\frac{d}{dy}\psi_n^{j+1}(\pm1)=0,
		\end{aligned}\right.
	\end{equation}
	where
	\begin{equation}\nonumber
		\begin{aligned}
			F^j_{2,n}=&-\sum\limits_{m\in \mathbb{Z}} v_{1,n-m}^j \omega_{m}^j ,~~\ F^j_{1,n}=\sum\limits_{m\in \mathbb{Z}} v_{2,n-m}^j \omega_{m}^j, \\
		\end{aligned}
	\end{equation}
	and
	\begin{equation}\nonumber
		v^j_{2,n}=i\hat{n}\psi_n^j,~~v^j_{1,n}=-\frac{d}{dy}\psi_n^j,~~\omega^j_{n}=\left(\frac{d^2}{dy^2}-\hat{n}^2\right)\psi_n^j.
	\end{equation}
	For convenience, we define the projection operator
	\begin{equation}\label{projection}
		\Q \Bv=\sum\limits_{n\neq 0}(v_{1,n}\Be_1+v_{2,n}\Be_2)e^{i\hat{n}x}.
	\end{equation}
	
	Assume that  $\|\BF\|_{L^2(\Omega)}\le \Phi^\frac{1}{32}$ and set
	\begin{equation}\nonumber
		\mathcal{J}=\left\{\Bv=\sum\limits_{n\in\Z} \Bv_n
		\left|\begin{array}{l}v_{2,0}=0,~~\ \ \left\|\Q\Bv\right\|_{L^2(\Omega)}\le  \Phi^{-\frac{5}{12}},\\ \left(\left\|\Bv_0\right\|_{H^2(\Omega)}^2+ \left\|\Q\Bv\right\|_{H^\frac53(\Omega)}^2\right)^\frac12\le 2C_4\|\BF\|_{L^2(\Omega)}
		\end{array}\right.
		\right\},
	\end{equation}
	where $C_4=\max\{C_1,C_2,C_3\}$ and $C_i(i=1,2,3)$ are positive constants appeared in Propositions \ref{0mode}, \ref{mediumv}, and \ref{highv}, respectively.
	
	{\em Step 2: Mathematical induction.} According to the linear estimates obtained in Propositions \ref{0mode}, \ref{mediumv}, and \ref{highv}, one has $\Bv^0\in \mathcal{J}$ immediately, when $\tilde{C}$ is sufficiently large. Assume that $\Bv^j\in \mathcal{J}$, we claim that $\Bv^{j+1}\in \mathcal{J}$. First, it follows from Propositions \ref{0mode}, \ref{mediumv}, and \ref{highv} that
	\begin{equation}\begin{aligned}
			&\left(\left\|\Bv_0^{j+1}\right\|_{H^2(\Omega)}^2+  \left\|\Q\Bv^{j+1}\right\|_{ H^\frac53(\Omega)}^2 \right)^\frac12\\
			\le& C_4\left(\left\|F_{1,0}+F_{1,0}^j\right\|_{L^2(\Omega)}^2+\sum\limits_{n\neq 0}\left\|F_{2,n}+F_{2,n}^j\right\|_{L^2(\Omega)}^2+\left\|F_{1,n}+F_{1,n}^j\right\|_{L^2(\Omega)}^2\right)^\frac12\\
			\le& \sqrt{2}C_4\left\|\BF\right\|_{L^2(\Omega)}
			+C\left(\sum\limits_{n\in \mathbb{Z} }\left\|F^{j}_{1,n}\right\|_{L^2(\Omega)}^2+\sum\limits_{n\neq 0}\left\|F^{j}_{2,n}\right\|_{L^2(\Omega)}^2\right)^\frac12.
		\end{aligned}
	\end{equation}
	This, together with the assumption $v^j_{2,0}=0$, gives
	\begin{equation}\begin{aligned}\label{eq706}
			&\left(\left\|\Bv_0^{j+1}\right\|_{H^2(\Omega)}^2+  \left\|\Q\Bv^{j+1}\right\|_{ H^\frac53(\Omega)}^2 \right)^\frac12\\
			\le&\sqrt{2}C_4\left\|\BF\right\|_{L^2(\Omega)}+C\left[\sum\limits_{n\in \mathbb{Z} } \left\| \sum\limits_{m\neq 0,n}v^j_{2,n-m}\omega^j_{m}\right\|_{L^2(\Omega)}^2 + \sum\limits_{n \neq 0} \left\|\sum\limits_{m\neq 0,n}v^j_{1,n-m}\omega^j_{m}\right\|_{L^2(\Omega)}^2 \right.\\
			&\left.+\sum\limits_{n\neq 0}\left(\left\|v^j_{2,n}\omega^j_{0}\right\|_{L^2(\Omega)}^2+\left\|v^j_{1,n}\omega^j_{0}\right\|_{L^2(\Omega)}^2+\left\|v^j_{1,0}\omega^j_{n}\right\|_{L^2(\Omega)}^2\right)\right]^\frac12.
	\end{aligned}\end{equation}
	Using Hausdorff-Young inequality and the interpolation inequalities yields
	{\begin{equation}\begin{aligned}\label{eq707}
				&\left(\sum\limits_{n\in \mathbb{Z} }\left\| \sum\limits_{m\neq 0,n}v^j_{2,n-m}\omega^j_{m}\right\|_{L^2(\Omega)}^2\right)^\frac12=
				\left(\sum\limits_{n\in \mathbb{Z} } \left\| \sum\limits_{m\neq 0,n}(\Q v_2^j)_{n-m}(\Q \omega^j)_m\right\|_{L^2(\Omega)}^2\right)^\frac12\\
				\leq & \left(\sum\limits_{n\in \mathbb{Z} }\left\|(\Q v_2^j\cdot\Q \omega^j)_n\right\|_{L^2(\Omega)}^2\right)^\frac12=\left\|\Q v_2^j\cdot\Q \omega^j\right\|_{L^2(\Omega)} \le\left\|\Q v_2^j\right\|_{L^{6} (\Omega)}\left\|\Q \omega^j\right\|_{L^3(\Omega)}\\
				\le& C\left\|\Q \Bv^j\right\|_{L^2(\Omega)}^\frac25\left\|\Q \Bv^j\right\|_{H^\frac53(\Omega)}^\frac35\left\|\Q \Bv^{j}\right\|_{H^\frac53(\Omega)}\\
				\le& C \Phi^{-\frac16}\|\BF\|_{L^2(\Omega)}^\frac85.
	\end{aligned}\end{equation}}
	
	Similarly, it also holds that
	{\begin{equation}\label{eq708}
			\left(\sum\limits_{n\neq 0}\left\| \sum\limits_{m\neq 0,n}v^j_{1,n-m}\omega^j_{m}\right\|_{L^2(\Omega)}^2\right)^\frac12\leq \left\|\Q v_1^j\right\|_{L^6 (\Omega)}\left\|\Q \omega^j\right\|_{L^3(\Omega)}
			\le C \Phi^{-\frac16 } \|\BF\|_{L^2(\Omega)}^\frac85.
	\end{equation} }
	{Using H\"{o}lder inequality and Sobolev's embedding inequalities yields
		\begin{equation}\begin{aligned}\label{eq709}
				&\left(\sum\limits_{n\neq 0}\left\|v^j_{2,n}\omega^j_{0}\right\|_{L^2(\Omega)}^2+\sum\limits_{n\neq 0}\left\|v^j_{1,n}\omega^j_{0}\right\|_{L^2(\Omega)}^2\right)^\frac12\le C\left(\sum\limits_{n\neq 0}\left\|\Bv^{j}_n\right\|_{L^6 (\Omega)}^2\left\|\omega^j_{0}\right\|_{L^3(\Omega)}^2\right)^\frac12\\
				\le& C\left\|\Bv^{j}_{0}\right\|_{H^2(\Omega)} \left\|\Q\Bv^{j}\right\|_{L^2(\Omega)}^\frac25 \left\|\Q\Bv^{j}\right\|_{H^\frac53(\Omega)}^\frac35\\
				\le& C \Phi^{-\frac16 }\|\BF\|_{L^2(\Omega)}^\frac85
		\end{aligned}\end{equation}
		and
		\begin{equation}\begin{aligned}\label{eq710}
				&\left(\sum\limits_{n\neq 0}\left\|v^j_{1,0}\omega^j_{n}\right\|_{L^2(\Omega)}^2\right)^\frac12
				\le  C\|\Bv_{0}^{j}\|_{L^\infty(\Omega)}\|\Q\Bv^{j}\|_{H^1(\Omega)}\\
				\le& C\|\Bv_{0}^{j}\|_{H^2(\Omega)}\|\Q\Bv^{j}\|_{L^2(\Omega)}^\frac25\|\Q\Bv^{j}\|_{H^\frac53(\Omega)}^\frac35\le C \Phi^{-\frac16}\|\BF\|_{L^2(\Omega)}^\frac85.
		\end{aligned}\end{equation}
		Combining the estimates \eqref{eq706}-\eqref{eq710}, together with the assumption $\|\BF\|_{L^2(\Omega)}\le \Phi^\frac{1}{32}$,  gives
		\begin{equation}\nonumber
			\begin{aligned}
				&\left(\left\|\Bv_0^{j+1}\right\|_{H^2(\Omega)}^2+ \left\|\Q\Bv^{j+1}\right\|_{H^\frac53(\Omega)}^2\right)^\frac12
				\le \sqrt{2}C_4\|\BF\|_{L^2(\Omega)}+C \Phi^{-\frac{71}{480}}\|\BF\|_{L^2(\Omega)}.
		\end{aligned}\end{equation}
		Moreover, the estimates \eqref{eq706}-\eqref{eq710} together with  Propositions \ref{mediumv} and \ref{highv} yield that
		\begin{equation}\nonumber
			\begin{aligned}
				&\left\|\Q\Bv^{j+1}\right\|_{L^2(\Omega)}\le C\Phi^{-\frac{10}{21}}\left(\sum\limits_{n\neq0}\left\|F_{2,n}+F_{2,n}^j\right\|_{L^2(\Omega)}^2+\left\|F_{1,n}+F_{1,n}^j\right\|_{L^2(\Omega)}^2\right)^\frac12\\
				\le& C \Phi^{-\frac{10}{21}} \|\BF\|_{L^2(\Omega)}+ C \Phi^{-\frac{10}{21}} \Phi^{-\frac{71}{480}}\|\BF\|_{L^2(\Omega)}.
		\end{aligned}\end{equation}
		Then for any $\Phi\ge \tilde{C}(1+L)^{63}$, the solution $\Bv^{j+1}$ satisfies
		\begin{equation}\label{eq712}
			\begin{aligned}
				\left(\left\|\Bv_0^{j+1}\right\|_{H^2(\Omega)}^2+ \left\|\Q\Bv^{j+1}\right\|_{H^\frac53(\Omega)}^2\right)^\frac12\le2C_4\|\BF\|_{L^2{(\Omega)}}
		\end{aligned}\end{equation}
		and
		\begin{equation}\nonumber\begin{aligned}
				&\left\|\Q\Bv^ {j+1}\right\|_{L^2(\Omega)}\le \Phi^{-\frac{5}{12}},
		\end{aligned}\end{equation}
		provided $\tilde{C}$ is sufficiently large. Hence $\Bv^{j+1}\in \mathcal{J}$. By mathematical induction, 	$\Bv^j \in \mathcal{J}$ for each $j\in \mathbb{N}$ and $\{\Bv^j\}_{j\ge0}$ is a bounded sequence in
		\begin{equation}\nonumber
			\mathcal{J}_0=\left\{\Bv=\sum\limits_{n\in\Z} \Bv_n
			\left|\left(\left\|\Bv_0\right\|_{H^2(\Omega)}^2+ \left\|\Q\Bv\right\|_{H^\frac53(\Omega)}^2\right)^\frac12<\infty
			\right.\right\}.
		\end{equation}
		Therefore, there exists a function $\Bv\in \mathcal{J}_0$ such that $\Bv^j\rightharpoonup \Bv$ in $\mathcal{J}_0$ and
		\begin{equation}\label{eq715}
			\left(\left\|\Bv_0\right\|_{H^2(\Omega)}^2+   \left\|\Q\Bv\right\|_{H^\frac53(\Omega)}^2\right)^\frac12\le 2C_4\|\BF\|_{L^2(\Omega)}, \ \ \ \ \
			\|\mathcal{Q}\Bv\|_{L^2(\Omega)} \leq \Phi^{-\frac{5}{12}}.
	\end{equation} }
	Since $\psi^{j+1}_n$ is the solution to the problem \eqref{model72}, $\Bv^{j+1}$ satisfies
	\begin{equation}\nonumber
		\mathrm{curl}(-\Delta \Bv^{j+1}+\BU\cdot \nabla \Bv^{j+1}+\Bv^{j+1}\cdot \nabla \BU+\Bv^{j}\cdot\nabla\Bv^{j}-\BF)=0.
	\end{equation}
	Taking the limit for $j$ in the above equation yields
	\begin{equation}\nonumber
		\mathrm{curl}(-\Delta \Bv+\BU\cdot \nabla \Bv+\Bv\cdot \nabla \BU+\Bv\cdot\nabla\Bv-\BF)=0.
	\end{equation}
	Therefore, there exists a function $P$ with $\nabla P\in L^2(\Omega)$ such that
	\begin{equation}\label{eq718}
		-\Delta \Bv+\BU\cdot \nabla \Bv+\Bv\cdot \nabla \BU+\nabla P=-\Bv\cdot\nabla\Bv+\BF.
	\end{equation}
	
	{\em Step 3: $H^2$-regularity. }
	It follows from Propositions \ref{mediumv} and \ref{highv} that
	\begin{equation}\begin{aligned}\label{eq723}
			\left\|\Q\Bv\right\|_{H^2(\Omega)}
			\le&C  \Phi^\frac14\left[\left\|\Q\BF\right\|_{L^2(\Omega)}+\left(\sum\limits_{n\in \mathbb{Z} }\left\| \sum\limits_{m\neq 0,n}v_{2,n-m}\omega_{m}\right\|_{L^2(\Omega)}^2\right)^\frac12\right.\\
			&+\left(\sum\limits_{n\neq 0}\left\|\sum\limits_{m\neq 0,n}v_{1,n-m}\omega_{m}\right\|_{L^2(\Omega)}^2\right)^\frac12+\left(\sum\limits_{n\neq 0}\left\|v_{2,n}\omega_{0}\right\|_{L^2(\Omega)}^2\right)^\frac12\\
			&\left.+\left(\sum\limits_{n\neq 0}\left\|v_{1,n}\omega_{0}\right\|_{L^2(\Omega)}^2\right)^\frac12
			+\left(\sum\limits_{n\neq 0}\left\|v_{1,0}\omega_{n}\right\|_{L^2(\Omega)}^2\right)^\frac12\right].
	\end{aligned}\end{equation}
	With the aid of the estimates \eqref{eq715}, one can estimate the terms on the right-hand side of \eqref{eq723} as in Step 2. Hence it holds that
	{\begin{equation}\nonumber
			\left\|\Q\Bv\right\|_{H^2(\Omega)}
			\leq C \Phi^{\frac14} \left\|\BF\right\|_{L^2(\Omega)}+ C \Phi^{\frac14} \Phi^{- \frac{71}{480}} \left\|\BF\right\|_{L^2(\Omega)}.
	\end{equation}}
	This, together with \eqref{eq715}, implies that $\Bv$ is a strong solution of \eqref{eq718} and satisfies
	\begin{equation}\nonumber
		\left\|\Bv\right\|_{H^2(\Omega)}\le C \Phi^{\frac14}\left\|\BF\right\|_{L^2(\Omega)}.
	\end{equation}
	
	{\em Step 4: Uniqueness.} To prove the uniqueness of the solution $\Bv$, we assume that $\Bv^j\in \mathcal{J}(j=1,2)$ are two solutions of the nonlinear problem \eqref{model11}-\eqref{model11'} satisfying
	\begin{equation}\nonumber
		\left(\left\|\Bv^{j}_0\right\|_{H^2(\Omega)}^2+  \left\|\Q\Bv^{j}\right\|_{H^\frac53(\Omega)}^2\right)^\frac12\le C\left\|\BF\right\|_{L^2(\Omega)}\le C\Phi^\frac{1}{32}, \ \ \ \|\Q \Bv^j\|_{L^2(\Omega)}\leq \Phi^{-\frac{5}{12}},\,\,\text{for } j=1,2.
	\end{equation}
	In addition, for each $n$, the difference of the stream functions $\tilde{\psi}_n:=\psi_n^1-\psi_n^2$ satisfies
	\begin{equation}\nonumber
		\begin{aligned}
			&-i\hat{n}U''\tilde{\psi}_n+i\hat{n}U\left(\frac{d^2}{dy^2}-\hat{n}^2\right)\tilde{\psi}_n-\left(\frac{d^2}{dy^2}-\hat{n}^2\right)^2\tilde{\psi}_n\\
			=&i\hat{n}(F^1_{2,n}-F^2_{2,n})
			-\frac{d}{dy}(F^1_{1,n}-F^2_{1,n}),
		\end{aligned}
	\end{equation}
	where
	\[
	F_{1,n}^i = \sum\limits_{m\in \mathbb{Z}}v_{2,n-m}^i\omega_{m}^i\quad \text{and}\quad F_{2,n}^i=\sum\limits_{m\in \mathbb{Z}}v_{1,n-m}^i\omega_{m}^i.
	\]
	Denote
	\[
	\tilde{\Bv}=\Bv^1-\Bv^2\quad \text{and}\quad \tilde{\omega}=\omega^1-\omega^2.
	\]
	First, it follows from Proposition \ref{0mode} and the Sobolev's embedding inequalities that
	{ \begin{equation}\begin{aligned}\label{eq724}
				&\|\tilde{\Bv}_0\|_{H^2(\Omega)}\le C\left\|F^1_{1,0}-F^2_{1,0}\right\|_{L^2(\Omega)}
				\le C\left\|\sum\limits_{m\neq 0}v_{2,-m}^1\omega_{m}^1-v_{2,-m}^2\omega_{m}^2\right\|_{L^2(\Omega)}\\
				\le&C\left\|\sum\limits_{m\neq 0}v_{2,-m}^1\left(\omega_{m}^1-\omega_{m}^2\right)\right\|_{L^2(\Omega)}+C\left\|\sum\limits_{m\neq 0}\left(v_{2,-m}^1-v_{2,-m}^2\right)\omega_{m}^2\right\|_{L^2(\Omega)}\\
				\le& C\left\|\left(\Q v_{2}^1\cdot \Q(\omega^1-\omega^2)\right)_0\right\|_{L^2(\Omega)}+C\left\|\left(\Q(v_{2}^1-v_{2}^2)\cdot\Q\omega^2\right)_0\right\|_{L^2(\Omega)}\\
				\le&C\left\|\Q\Bv^{1}\right\|_{L^6(\Omega)}\left\|\Q\tilde{\omega}\right\|_{L^3(\Omega)}
				+C\left\|\Q\tilde{\Bv}\right\|_{L^\infty(\Omega)}\left\|\Q\Bv^{2}\right\|_{H^1(\Omega)}\\
				\le&C\left\|\Q\Bv^{1}\right\|_{L^2(\Omega)}^\frac{2}{5} \left\|\Q\Bv^{1}\right\|_{H^\frac53(\Omega)}^\frac{3}{5}
				\left\|\Q\tilde{\Bv}\right\|_{H^\frac32(\Omega)}
				\\
				&+C\left\|\Q\tilde{\Bv}\right\|_{H^\frac32(\Omega)}\left\|\Q\Bv^{2}\right\|_{L^2(\Omega)}^\frac25 \left\|\Q\Bv^{2}\right\|_{H^\frac53(\Omega)}^\frac35\\
				\le&C  \Phi^{-\frac{71}{480}}\left\|\Q\tilde{\Bv}\right\|_{H^\frac32(\Omega)}.
		\end{aligned}\end{equation}
		In addition, applying Propositions \ref{mediumv} and \ref{highv}, and the interpolation inequalities yields
		\begin{equation}\begin{aligned}\label{eq725}
				&\|\Q\tilde{\Bv}\|_{H^\frac32(\Omega)}\le C\|\Q\tilde{\Bv}\|_{L^2(\Omega)}^\frac{1}{10}\|\Q\tilde{\Bv}\|_{H^\frac53(\Omega)}^\frac{9}{10}\\
				\le& C  \Phi^{-\frac{1}{21}} \left(\sum\limits_{n\neq 0}\left\|F^{1}_{2,n}-F^{2}_{2,n}\right\|_{L^2(\Omega)}^2
				+\left\|F^{1}_{1,n}-F^{2}_{1,n}\right\|_{L^2(\Omega)}^2\right)^\frac12.
		\end{aligned}\end{equation}
		It follows from the Hausdorff-Young inequality and Sobolev's embedding inequalities that one has
		\begin{equation}\begin{aligned}\label{eq726}
				&\left(\sum\limits_{n\neq0}\left\| \sum\limits_{m\neq 0,n}v^1_{2,n-m}\omega^1_{m}-v^2_{2,n-m}\omega^2_{m}\right\|_{L^2(\Omega)}^2\right)^\frac12\\
				\le&C\left(\sum\limits_{n\neq 0}\left\|\sum\limits_{m\neq 0,n}v^1_{2,n-m}(\omega^1_m-\omega^2_m)\right\|_{L^2(\Omega)}^2
				+\sum\limits_{n\neq 0}\left\|\sum\limits_{m\neq 0,n}(v^1_{2,n-m}-v^2_{2,n-m})\omega^2_m\right\|_{L^2(\Omega)}^2\right)^\frac12\\
				\le&C\left\|\Q \Bv^1\cdot\Q\tilde{\omega}\right\|_{L^2(\Omega)}+C\left\|\Q \tilde{\Bv}\cdot\Q\omega^2\right\|_{L^2(\Omega)}\\
				\le&C\left\|\Q \Bv^1\right\|_{L^6(\Omega)}\left\|\Q\tilde{\omega}\right\|_{L^3(\Omega)}+C\left\|\Q \tilde{\Bv}\right\|_{L^6(\Omega)}\left\|\Q\omega^2\right\|_{L^3(\Omega)}\\
				\le & C\left\|\Q v^1\right\|_{H^\frac53(\Omega)}\left\|\Q\tilde{\Bv}\right\|_{H^\frac32(\Omega)}+C\left\|\Q v^2\right\|_{H^\frac53(\Omega)}\left\|\Q\tilde{\Bv}\right\|_{H^\frac32(\Omega)}\\
				\leq& C \Phi^\frac{1}{32}\left\|\Q\tilde{\Bv}\right\|_{H^\frac32(\Omega)}
		\end{aligned}\end{equation}
		and
		\begin{equation}\begin{aligned}\label{eq727}
				&\left(\sum\limits_{n\neq 0}\left\|\sum\limits_{m\neq 0,n}v^1_{1,n-m}\omega^1_{m}-v^2_{1,n-m}\omega^2_{m}\right\|_{L^2(\Omega)}^2\right)^\frac12\\
				\le&C\left\|\Q \Bv^1\cdot\Q\tilde{\omega}\right\|_{L^2(\Omega)}+C\left\|\Q \tilde{\Bv}\cdot\Q\omega^2\right\|_{L^2(\Omega)}\le C \Phi^\frac{1}{32}\left\|\Q\tilde{\Bv}\right\|_{H^\frac32(\Omega)}.
		\end{aligned}\end{equation}
		Similarly, one has
		\begin{equation}\begin{aligned}\label{eq728}
				&\left(\sum\limits_{n\neq 0}\left\|v^1_{2,n}\omega^1_{0}-v^2_{2,n}\omega^2_{0}\right\|_{L^2(\Omega)}^2\right)^\frac12\\
				\le&C\left(\sum\limits_{n\neq 0}\left\|v^1_{2,n}\tilde{\omega}_{0}\right\|_{L^2(\Omega)}^2\right)^\frac12+
				C\left(\sum\limits_{n\neq 0}\left\|\tilde{v}_{2,n}\omega^2_{0}\right\|_{L^2(\Omega)}^2\right)^\frac12\\
				\le&C\left\|\tilde{\omega}_{0}\right\|_{L^3(\Omega)}\left(\sum\limits_{n\neq 0}\left\|v^1_{2,n}\right\|_{L^6(\Omega)}^2\right)^\frac12+
				C\left\|\omega^2_{0}\right\|_{L^3(\Omega)}\left(\sum\limits_{n\neq 0}\left\|\tilde{v}_{2,n}\right\|_{L^6(\Omega)}^2\right)^\frac12\\
				\le&C\left\|\tilde{\Bv}_{0}\right\|_{H^2(\Omega)}\left(\sum\limits_{n\neq 0}\left\|\Bv^{1}_n\right\|_{H^\frac32(\Omega)}^2\right)^\frac12+
				C\left\|\Bv^{2}_{0}\right\|_{H^2(\Omega)}\left(\sum\limits_{n\neq 0}\left\|\tilde{\Bv}_n\right\|_{H^\frac32(\Omega)}^2\right)^\frac12\\
				\le&C \Phi^\frac{1}{32} \left(\left\|\tilde{\Bv}_0\right\|_{H^2(\Omega)}+\left\|\Q\tilde{\Bv}\right\|_{H^\frac32(\Omega)} \right).
		\end{aligned}\end{equation}
		Finally, it holds that
		\begin{equation}\begin{aligned}\label{eq729}
				\left(\sum\limits_{n\neq 0}\left\|v^1_{1,n}\omega^1_{0}-v^2_{1,n}\omega^2_{0}\right\|_{L^2(\Omega)}^2\right)^\frac12
				\le&C \Phi^\frac{1}{32}\left(\left\|\tilde{\Bv}_0\right\|_{H^2(\Omega)}+\left\|\Q\tilde{\Bv}\right\|_{H^\frac32(\Omega)} \right)
		\end{aligned}\end{equation}
		and
		\begin{equation}\begin{aligned}\label{eq730}
				&\left(\sum\limits_{n\neq 0}\left\|v^1_{1,0}\omega^1_{n}-v^2_{1,0}\omega^2_{n}\right\|_{L^2(\Omega)}^2\right)^\frac12\\
				\le&C\left\|v_{1,0}^1\right\|_{L^6(\Omega)}\left(\sum\limits_{n\neq 0}\left\|\tilde{\omega}_n\right\|_{L^3(\Omega)}^2\right)^\frac12+
				C\left\|\tilde{v}_{1,0}\right\|_{L^6(\Omega)}\left(\sum\limits_{n\neq 0}\left\|\omega^2_n\right\|_{L^3(\Omega)}^2\right)^\frac12\\
				\le&C \Phi^\frac{1}{32} \left(\left\|\tilde{\Bv}_0\right\|_{H^2(\Omega)}+\left\|\Q\tilde{\Bv}\right\|_{H^\frac32(\Omega)} \right).
		\end{aligned}\end{equation}
		
		Combining the estimates \eqref{eq724}-\eqref{eq730} gives
		\begin{equation}\nonumber
			\begin{aligned}
				\|\tilde{\Bv}_0\|_{H^2(\Omega)}+\|\Q\tilde{\Bv}\|_{H^\frac32(\Omega)}\le& C \Phi^{-\frac{11}{672}}\left(\|\tilde{\Bv}_0\|_{H^2(\Omega)}+\|\Q\tilde{\Bv}\|_{H^\frac32(\Omega)}\right).
	\end{aligned}\end{equation} }
	This implies that
	\begin{equation}\nonumber
		\begin{aligned}
			\|\tilde{\Bv}_0\|_{H^2(\Omega)}+\|\Q\tilde{\Bv}\|_{H^\frac32(\Omega)}=0,
	\end{aligned}\end{equation}
	as long as $\Phi\ge \tilde{C} (1+L)^{63}$ and $\tilde{C}$ is sufficiently large.
	
	It should be noted that the Sobolev embedding  constant $C$ which appear in \eqref{eq707}-\eqref{eq710}  and  \eqref{eq724}-\eqref{eq730} is independent of $L$.  The proof is similar to \eqref{largeL}.    Hence $\tilde{C}$ is also independent of $L$.
	
	{\em Step 5. The case $0< L <1$.}  When $0<L<1$, we choose some integer $M$ such that $1<ML<2$. Then one follows the proof of Steps 1-4 on the periodic domain $\tilde{\Omega}=\mathbb{T}_{2\pi M L}\times [-1,1]$ to conclude that the nonlinear problem \eqref{model11}-\eqref{model11'} admits a unique solution $\Bv\in H^2(\tilde{\Omega})$, provided $\Phi\ge \tilde{C}(1+L)^{63}$ and $\|\BF\|_{L^2(\tilde{\Omega})}\leq \Phi^\frac{1}{32}$. Since $\BF$ is periodic with period $2L$, so are $\Bv^j$ and $\Bv$. Moreover, $\Bv$  satisfies
	\begin{equation}\nonumber
		\|\Bv_0\|_{H^2(\Omega)}^2+ \sum\limits_{n\neq0}\left\|\Bv_n\right\|_{H^\frac53(\Omega)}^2 \leq \frac{1}{M}\left( \|\Bv_0\|_{H^2(\tilde{\Omega})}^2 + \sum\limits_{n\neq0}\left\|\Bv_n\right\|_{H^\frac53(\tilde{\Omega})}^2 \right)
		\le \frac{C}{M} \|\BF\|_{L^2(\tilde{\Omega})}^2 \leq C \|\BF\|_{L^2(\Omega)}^2,
	\end{equation}
	\begin{equation}\nonumber
		\|v_2\|_{L^2(\Omega)}^2 \leq \frac{1}{M} \|v_2\|_{L^2(\tilde{\Omega})}^2 \leq \frac{1}{M} \Phi^{-\frac{5}{6}}\leq L  \Phi^{-\frac56},
	\end{equation}
	and
	\begin{equation}\nonumber
		\| \Bv\|_{H^2(\Omega)}^2 \leq \frac{1}{M}\|\Bv  \|_{H^2(\tilde{\Omega})}^2 \le \frac{C}{M} \Phi^\frac12 \|\BF\|_{L^2(\tilde{\Omega})}^2
		\leq C \Phi^\frac12\|\BF\|_{L^2(\Omega)}^2 .
	\end{equation}
	Noting that  the assumption
	\begin{equation*}
		\|\BF\|_{L^2(\Omega)}\leq \sqrt{\frac{L}{2}}\Phi^\frac{1}{32} \leq M^{-\frac12}\Phi^\frac{1}{32}
	\end{equation*}
	implies $\|\BF\|_{L^2(\tilde{\Omega})}\leq \Phi^\frac{1}{32}$, we finish the proof of Proposition \ref{largeF}.
\end{proof}
Taking $\Bu=\Bv+\BU$, the proof of Theorem \ref{largeforce} is completed.


\section{Uniqueness of the solutions for the nonlinear problem}\label{sec-unique}
This section is devoted to the proof of Theorem \ref{uniqueness}. First we consider the case $L\geq 1$. 	
Let $\Bu$ be a  solution of the problem  \eqref{NS}-\eqref{BC} and $\Bv=\Bu-\BU$. Hence $\Bv$ satisfies the perturbed problem \eqref{model11}-\eqref{model11'}.
It  follows from the interpolation inequality and the linear estimates obtained in Propositions \ref{0mode}, \ref{mediumv}, and \ref{highv}  that one has
\begin{equation}\begin{aligned}\label{6-10}
		&~~\left\|\Bv_0\right\|_{H^2(\Omega)}+\Phi^\frac{1}{30}\left\|\Q\Bv\right\|_{H^\frac32(\Omega)}\\
		\leq&~~\left\|\Bv_0\right\|_{H^2(\Omega)}+C\Phi^\frac{1}{30}\left\|\Q\Bv\right\|_{L^2(\Omega)}^\frac{1}{10}\left\|\Q\Bv\right\|_{H^\frac53(\Omega)}^\frac{9}{10}\\
		\leq&~~C (\left\|(\Bv\cdot\nabla v_1)_0\right\|_{L^2(\Omega)}+\Phi^{-\frac{1}{70}}\left\|\Q(\Bv\cdot\nabla \Bv)\right\|_{L^2(\Omega)}),
\end{aligned}\end{equation}
where $\Q$ is the projection operator defined in \eqref{projection}.
Since $v_{2,0}=0$, using Hausdorff-Young inequality yields
\begin{equation}\label{6-4}
	\begin{aligned}
		\|(\Bv\cdot\nabla v_1)_0\|_{L^2}=&\|(v_1\partial_x v_1+v_2\partial_y v_1)_0\|_{L^2}\\
		=&\|-(v_1\partial_y v_2)_0+(v_2\partial_y v_1)_0\|_{L^2}\\
		\leq &~~ \|(v_1\partial_yv_2)_0\|_{L^2(\Omega)}+\|(v_2\partial_yv_1)_0\|_{L^2(\Omega)}\\
		\leq &~~ \left\|\sum_{n \neq0}v_{1,-n}v_{2,n}'\right\|_{L^2(\Omega)}+\left\|\sum_{n \neq0}v_{2,-n}v_{1,n}'\right\|_{L^2(\Omega)}\\
		\leq &~~ \|\Q v_1\|_{L^\infty(\Omega)}\|\Q(\partial_yv_2)\|_{L^2(\Omega)}+\|\Q v_2\|_{L^6(\Omega)}\|\Q(\partial_yv_1)\|_{L^3(\Omega)}\\
		\leq &~~ C\|v_2\|_{H^1(\Omega)}\|\Q \Bv\|_{H^\frac32(\Omega)}.
\end{aligned}\end{equation}
Moreover, with the aid of  the divergence free property of $\Bv$, one can rewrite the nonlinear term $\Bv\cdot\nabla\Bv$ as
\begin{equation}\nonumber
	\Bv\cdot\nabla\Bv=(-v_1\partial_yv_2+v_2\partial_yv_1,v_1\partial_xv_2+v_2\partial_yv_2)^T.
\end{equation}
Hence one has
\begin{equation}\label{6-5}\begin{aligned}
		\|\Q(\Bv\cdot\nabla\Bv)\|_{L^2(\Omega)}\leq &~~ \|\Q(v_1\partial_yv_2)\|_{L^2(\Omega)}+\|\Q(v_2\partial_yv_1)\|_{L^2(\Omega)}
		+\|\Q(v_1\partial_xv_2)\|_{L^2(\Omega)}\\
		&~~+\|\Q(v_2\partial_yv_2)\|_{L^2(\Omega)}.
\end{aligned}\end{equation}
By definition, one has
\begin{equation}\nonumber
	\begin{aligned}
		\|\Q(v_1\partial_yv_2)\|_{L^2(\Omega)}\leq&~~\left(\sum_{n\neq0}\left\|\sum_{m\in \Z}v_{1,n-m}v_{2,m}'\right\|_{L^2(\Omega)}^2\right)^\frac12.
\end{aligned}\end{equation}
Hence it holds that
\begin{equation}\nonumber
	\begin{aligned}
		\|\Q(v_1\partial_yv_2)\|_{L^2(\Omega)}  \leq&~~\left(\sum_{n\neq0}\left\|\sum_{m\neq 0,n}v_{1,n-m}v_{2,m}'\right\|_{L^2(\Omega)}^2\right)^\frac12
		+\left(\sum_{n\neq0}\left\|v_{1,0}v_{2,n}'\right\|_{L^2(\Omega)}^2\right)^\frac12\\
		\leq&~~\|\Q v_{1}\|_{L^\infty(\Omega)}\|\Q(\partial_yv_{2})\|_{L^2(\Omega)}
		+\|\Bv_0\|_{L^\infty(\Omega)}\|\Q(\partial_yv_{2})\|_{L^2(\Omega)}\\
		\leq&~~C\|v_2\|_{H^1(\Omega)} \left(\|\Bv_0\|_{H^2(\Omega)}+\|\Q\Bv\|_{H^\frac32(\Omega)}\right).
\end{aligned}\end{equation}
Since the other terms on the right-hand side of \eqref{6-5} can be estimated similarly, one finally obtains
\begin{equation}\label{6-7}
	\|\Q(\Bv\cdot\nabla\Bv)\|_{L^2(\Omega)}\leq C\| v_2\|_{H^1(\Omega)} \left(\|\Bv_0\|_{H^2(\Omega)}+\|\Q\Bv\|_{H^\frac32(\Omega)} \right).
\end{equation}

The estimate \eqref{6-10}, together with  \eqref{6-4}, \eqref{6-7}, and the assumption \eqref{est6-1}, yields
\begin{equation}\nonumber
	\begin{aligned}
		&~~\left\|\Bv_0\right\|_{H^2(\Omega)}+\Phi^\frac{1}{30}\left\|\Q\Bv\right\|_{H^\frac32(\Omega)}\\
		\leq&~~C \left[\Phi^\frac{1}{90}\left\|\Q\Bv\right\|_{H^\frac32(\Omega)}+\Phi^{-\frac{1}{315}}\left(\left\|\Bv_0\right\|_{H^2(\Omega)}
		+\left\|\Q\Bv\right\|_{H^\frac32(\Omega)} \right)\right].
\end{aligned}\end{equation}
This implies that
\begin{equation}\nonumber
	\left\|\Bv_0\right\|_{H^2(\Omega)}+\Phi^\frac{1}{30}\left\|\Q\Bv\right\|_{H^\frac32(\Omega)}=0,
\end{equation}
provided that $\Phi\ge \tilde{C}(1+L)^{63}$ and $\tilde{C}$ is sufficiently large. It is also noted that
all the constants $C$ and $\tilde{C}$ are independent of $L$ when $L\geq 1$.

For the case that $0<L<1$, we repeat the proof above on the periodic domain $\tilde{\Omega}=\mathbb{T}_{2\pi ML}\times [-1,1]$, where $M$ is an integer satisfying $1<ML<2$. Hence one could also obtain the uniqueness of solutions provided
$\|u_2\|_{H^2(\tilde{\Omega})}  \leq \Phi^{\frac{1}{90}}$.

Hence the proof of Theorem \ref{uniqueness} is completed.



\appendix

\section{Some elementary lemmas}\label{sec-app}
In this appendix, we collect some elementary lemmas which play important roles in the estimate needed in this paper and might be useful elsewhere. The proof for Lemmas \ref{lemmaA1}-\ref{lemmaHLP} can be found in \cite[Appendix A]{SWX}.
We first give some Poincar\'e type inequalities.
\begin{lemma}\label{lemmaA1}
	For a  function $g\in C^2([-1,1])$ satisfying $g(\pm1)=0$,
	it holds that
	\be \nonumber
	\int_{-1}^1 |g |^2  \, dy \leq  \int_{-1}^1 |g'|^2 \, dy.
	\ee
	Moreover, one has
	\be \nonumber
	\inte |g'|^2 \, dy \leq \left( \inte | g''|^2\, dy \right)^{\frac12} \left( \inte |g|^2 \, dy \right)^{\frac12}
	\ee
	and consequently,
	\be \nonumber
	\inte |g'|^2 \, dy
	\leq \inte | g''|^2  \, dy.
	\ee
\end{lemma}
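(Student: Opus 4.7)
The plan is to prove the three estimates in order, each building on the previous. All three are classical Poincaré-type inequalities on $[-1,1]$ with Dirichlet boundary conditions, so the work is primarily to organize standard tools and chase the sharp constants.

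For the first inequality, I would expand $g$ in the Fourier sine basis adapted to $[-1,1]$ with Dirichlet data: since $g(\pm 1) = 0$ and $g \in C^2$, write
\begin{equation*}
g(y) = \sum_{k \geq 1} a_k \sin\!\left(\frac{k\pi(y+1)}{2}\right).
\end{equation*}
Parseval's identity then gives $\int_{-1}^1 |g|^2\,dy = \sum_k |a_k|^2$, while termwise differentiation (justified by the $C^2$ regularity) gives $\int_{-1}^1 |g'|^2\,dy = \sum_k (k\pi/2)^2 |a_k|^2$. Since $(k\pi/2)^2 \geq (\pi/2)^2 > 1$ for every $k \geq 1$, the desired inequality $\int|g|^2 \leq \int|g'|^2$ follows immediately. (An alternative route is to write $g(y) = \int_{-1}^y g'(s)\,ds$ and apply Cauchy-Schwarz, but the resulting constant is worse, so the Fourier argument is cleaner.)

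For the second inequality, the idea is integration by parts. Since $g(\pm 1) = 0$, the boundary contributions vanish and
\begin{equation*}
\int_{-1}^1 |g'|^2 \, dy = \bigl[g'\,\bar g\bigr]_{-1}^{1} - \int_{-1}^1 g''\,\bar g \, dy = -\int_{-1}^1 g''\,\bar g\, dy.
\end{equation*}
Applying Cauchy-Schwarz to the right-hand side immediately yields
$\int_{-1}^1 |g'|^2\,dy \leq \bigl(\int_{-1}^1 |g''|^2\,dy\bigr)^{1/2}\bigl(\int_{-1}^1 |g|^2\,dy\bigr)^{1/2}$.

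The third inequality is then purely formal: combining the first two estimates gives $\|g'\|_{L^2}^2 \leq \|g''\|_{L^2}\,\|g\|_{L^2} \leq \|g''\|_{L^2}\,\|g'\|_{L^2}$, so that (assuming $g' \not\equiv 0$, the case $g' \equiv 0$ being trivial) $\|g'\|_{L^2} \leq \|g''\|_{L^2}$, which is exactly the stated estimate after squaring. There is no substantive obstacle here; the only thing to be a bit careful about is that the first inequality needs the Dirichlet boundary condition $g(\pm 1) = 0$ to use the Poincaré constant less than one, which is exactly the hypothesis we are given.
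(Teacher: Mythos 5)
Your proof is correct. The paper itself does not reproduce an argument for this lemma (it cites \cite[Appendix A]{SWX}), but your route is the standard one and there is nothing to object to: the Fourier sine expansion gives the Dirichlet--Poincar\'e inequality on $[-1,1]$ with constant $(2/\pi)^2<1$, which is more than enough for the first bound (one could equally avoid Fourier series by writing $g(y)=\int_{-1}^y g'\,ds$ on $[-1,0]$ and $g(y)=-\int_y^1 g'\,ds$ on $[0,1]$, which also yields constant $1$); the second bound is exactly integration by parts, where the boundary term $[g'\bar g]_{-1}^1$ vanishes because $g(\pm1)=0$, followed by Cauchy--Schwarz; and the third follows by combining the two, with the trivial case $g'\equiv0$ handled separately as you note.
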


In the following lemma, we give the pointwise estimate for the functions evaluated on the boundary.
\begin{lemma}\label{lemmaA2}
	For a function $g\in C^2([-1,1])$ with $g(\pm1)=0$, one has
	\begin{equation}\nonumber
		|g' (\pm1) |  \leq C\left(   \inte| g' |^2 \, dy \right)^{\frac14}  \left( \inte \left|g'' \right|^2  \, dy       \right)^{\frac14}.
	\end{equation}
\end{lemma}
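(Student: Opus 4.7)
The plan is to exploit the boundary conditions $g(\pm 1)=0$ to produce an interior zero of $g'$, and then combine the fundamental theorem of calculus with Cauchy--Schwarz.

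First I would apply Rolle's theorem to $g$ on $[-1,1]$. Since $g(-1)=g(1)=0$ and $g\in C^2([-1,1])$, there exists a point $y_0\in(-1,1)$ at which $g'(y_0)=0$. This is the crucial structural input provided by the homogeneous boundary condition: without it one could only bound $|g'(\pm 1)|$ in terms of $\|g'\|_{L^2}+\|g'\|_{L^2}^{1/2}\|g''\|_{L^2}^{1/2}$, and the pure product form in the statement would fail.

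Next I would write, for each endpoint, the identity
\begin{equation*}
|g'(\pm 1)|^2 \;=\; |g'(\pm 1)|^2 - |g'(y_0)|^2 \;=\; 2\int_{y_0}^{\pm 1} g'(y)\, g''(y)\,dy,
\end{equation*}
which follows from the fundamental theorem of calculus applied to $(g')^2$. Bounding the integration interval by $[-1,1]$ and applying the Cauchy--Schwarz inequality yields
\begin{equation*}
|g'(\pm 1)|^2 \;\le\; 2\int_{-1}^{1}|g'(y)|\,|g''(y)|\,dy \;\le\; 2\Bigl(\int_{-1}^{1}|g'|^2\,dy\Bigr)^{1/2}\Bigl(\int_{-1}^{1}|g''|^2\,dy\Bigr)^{1/2}.
\end{equation*}
Taking square roots gives the desired estimate with $C=\sqrt{2}$.

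There is no real obstacle: the only subtlety is recognizing that the boundary hypothesis $g(\pm 1)=0$ is used not directly on $g'$ but through Rolle's theorem to produce an interior zero of $g'$, which converts the boundary trace bound into a clean multiplicative interpolation estimate. Everything else is the standard one-dimensional trace calculation.
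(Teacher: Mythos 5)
Your argument is correct and is essentially the standard one: the paper itself does not reprove this lemma but defers to \cite[Appendix A]{SWX}, and the Rolle-plus-fundamental-theorem-plus-Cauchy--Schwarz computation you give (with $C=\sqrt{2}$) is exactly that classical trace argument, including the key observation that the product form requires the interior zero of $g'$ supplied by $g(\pm1)=0$. One small caveat: in the paper the lemma is applied to complex-valued stream functions $\psi_n$, for which Rolle's theorem does not apply directly and the identity $\frac{d}{dy}|g'|^2=2\,\Re\left(\overline{g'}\,g''\right)$ replaces $\frac{d}{dy}(g')^2=2g'g''$; this is harmless, since applying your argument to the real and imaginary parts separately and using Cauchy--Schwarz on the resulting sum gives the same bound with the same constant.
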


	The following lemma is about a weighted interpolation inequality, which is quite similar to \cite[inequality (3.28)]{GHM}.
	\begin{lemma}\label{weightinequality} Let $g \in C^1([-1, 1])$, then one has
		\begin{equation}\nonumber
			\int_{-1}^1 |g|^2 \, dy  \leq C \left(\int_{-1}^1 (1-y^2)|g|^2  \, dy\right)^{\frac23} \left(\int_{-1}^1 |g'|^2 \, dy\right)^{\frac13}+C\int_{-1}^1 (1-y^2)|g|^2  \, dy.
		\end{equation}
	\end{lemma}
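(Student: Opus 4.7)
\noindent\textbf{Proof proposal for Lemma \ref{weightinequality}.}

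The plan is to use a standard splitting argument: break the interval $[-1,1]$ into an interior region where the weight $1-y^2$ is bounded below and two thin boundary strips where the weight degenerates, then control the boundary strips via a one-dimensional trace-type argument, and finally optimize over the width of the strips. For a parameter $\delta \in (0,1]$ to be chosen, write $[-1,1] = I_\delta \cup B_\delta^+ \cup B_\delta^-$ with $I_\delta = [-1+\delta,\, 1-\delta]$ and $B_\delta^\pm$ the remaining intervals of length $\delta$ near $y=\pm 1$. On $I_\delta$ one has $1-y^2 \geq c\delta$ (for an absolute $c>0$), so
\[
\int_{I_\delta} |g|^2 \, dy \;\leq\; \frac{C}{\delta} \int_{-1}^1 (1-y^2)|g|^2 \, dy.
\]

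Next I would estimate $\int_{B_\delta^+} |g|^2 \, dy$; the $B_\delta^-$ part is symmetric. For $y \in [1-\delta,1]$ and any $y_0 \in [1-\delta,\, 1-\delta/2]$, the fundamental theorem of calculus and Cauchy--Schwarz give
\[
|g(y)|^2 \;\leq\; 2|g(y_0)|^2 + 2\delta \int_{1-\delta}^{1} |g'(s)|^2 \, ds.
\]
Averaging the inequality over $y_0 \in [1-\delta,\, 1-\delta/2]$ and noting that on this averaging set $1-y_0^2 \sim \delta$, so $|g(y_0)|^2 \leq C\delta^{-1}(1-y_0^2)|g(y_0)|^2$, yields the pointwise bound
\[
|g(y)|^2 \;\leq\; \frac{C}{\delta^2} \int_{-1}^1 (1-s^2)|g(s)|^2 \, ds + 2\delta \int_{-1}^1 |g'(s)|^2 \, ds.
\]
Integrating in $y$ over $B_\delta^+$ (length $\delta$) and adding the symmetric contribution from $B_\delta^-$ produces
\[
\int_{-1}^1 |g|^2 \, dy \;\leq\; \frac{C}{\delta} \int_{-1}^1 (1-y^2)|g|^2 \, dy + C\delta^2 \int_{-1}^1 |g'|^2 \, dy.
\]

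Finally I would optimize in $\delta$. Set $A = \int_{-1}^1 (1-y^2)|g|^2 \, dy$ and $B = \int_{-1}^1 |g'|^2 \, dy$. If $A \leq B$, choosing $\delta = (A/B)^{1/3} \in (0,1]$ balances the two terms and gives $\int |g|^2 \leq C A^{2/3} B^{1/3}$, which is the first term on the right-hand side of the desired inequality. If instead $A > B$, one simply takes $\delta = 1$, in which case the boundary strips exhaust the whole interval and the estimate reduces to $\int |g|^2 \leq C(A + B) \leq 2CA$, which is absorbed into the additive term $C\int (1-y^2)|g|^2$ in the statement. Combining the two cases yields the lemma.

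The only mildly delicate point, and the one I would check carefully, is the boundary-strip estimate: one must pick the averaging window $[1-\delta, 1-\delta/2]$ so that on it the weight $(1-y_0^2)$ is comparable to $\delta$ (not smaller), so that $\delta^{-1}(1-y_0^2)$ indeed gives the claimed $\delta^{-2}$ factor after dividing by the window length. Everything else is routine Cauchy--Schwarz and optimization; no assumption on the boundary values of $g$ is needed, consistent with the hypothesis $g \in C^1([-1,1])$.
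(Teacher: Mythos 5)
Your argument is correct: the interior/boundary-strip splitting, the averaged fundamental-theorem-of-calculus bound on the strips (with the averaging window $[1-\delta,1-\delta/2]$ chosen so that $1-y_0^2\geq \delta/2$), and the final optimization $\delta=(A/B)^{1/3}$ versus $\delta=1$ together give exactly the stated inequality, with the only unmentioned (and trivial) edge case being $\int_{-1}^1(1-y^2)|g|^2\,dy=0$, which forces $g\equiv 0$ by continuity. The paper does not prove this lemma in-text but defers it to \cite[Appendix A]{SWX} and notes its similarity to \cite[inequality (3.28)]{GHM}; your decomposition-plus-optimization proof is the standard route for such weighted interpolation inequalities and is in the same spirit as those cited arguments, so it serves as a correct self-contained substitute.
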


	The following lemma is a variant of Hardy-Littlewood-P\'olya type inequality \cite[p.165]{HLP}.
	\begin{lemma}\label{lemmaHLP}
		Let $g\in C^1([-1,1])$, one has
		\begin{equation}\nonumber
			\inte |g |^2\,dy\le \frac13\inte |g' |^2(1-y^2)\,dy +92\inte |g |^2(1-y^2)\,dy.
		\end{equation}
		
	\end{lemma}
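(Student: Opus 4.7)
The plan is to reduce the claim to an integration-by-parts identity and then absorb the resulting cross term with Young's inequality, without ever invoking boundary conditions on $g$.

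First, I would split the integral trivially as
\begin{equation*}
\int_{-1}^1 |g|^2\,dy=\int_{-1}^1 (1-y^2)|g|^2\,dy+\int_{-1}^1 y^2|g|^2\,dy,
\end{equation*}
so the task reduces to bounding $\int_{-1}^1 y^2|g|^2\,dy$ by the two weighted quantities on the right-hand side of the target inequality.

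Next, I would exploit the identity $y=\frac{d}{dy}\bigl(\frac{y^2-1}{2}\bigr)$, where crucially the antiderivative $\frac{y^2-1}{2}$ vanishes at $y=\pm 1$. Writing $y^2|g|^2=y\cdot y|g|^2$ and integrating by parts once eliminates every boundary contribution and produces
\begin{equation*}
\int_{-1}^1 y^2|g|^2\,dy=\frac12\int_{-1}^1(1-y^2)|g|^2\,dy+\int_{-1}^1(1-y^2)\,y\,\mathrm{Re}(\bar g\, g')\,dy.
\end{equation*}
This is the heart of the argument: the remaining cross term pairs $|g|$ and $|g'|$ against the same weight $1-y^2$, so it is primed for a weighted Young inequality.

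Finally, using $|y|\leq 1$ on the interval and Young's inequality with a parameter $\delta>0$, the cross term is bounded by $\frac{1}{2\delta}\int_{-1}^1(1-y^2)|g|^2\,dy+\frac{\delta}{2}\int_{-1}^1(1-y^2)|g'|^2\,dy$. Combining everything and choosing $\delta=\frac23$ so that the coefficient of the weighted gradient term equals exactly $\frac13$, one arrives at
\begin{equation*}
\int_{-1}^1|g|^2\,dy\leq \frac94\int_{-1}^1(1-y^2)|g|^2\,dy+\frac13\int_{-1}^1(1-y^2)|g'|^2\,dy,
\end{equation*}
which is strictly stronger than the stated inequality (the constant $92$ is wildly non-optimal by this route, but the statement uses it with plenty of room). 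The main obstacle, modest as it is, lies in selecting the primitive $\frac{y^2-1}{2}$ rather than $\frac{y^2}{2}$: the $-\frac12$ shift is precisely what annihilates the boundary terms in the integration by parts and makes the whole argument boundary-condition-free. After that, the proof is simply tracking of Young-inequality constants.
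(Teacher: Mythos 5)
Your argument is correct: the integration by parts with the primitive $\tfrac{y^2-1}{2}$ is exact (the boundary terms vanish because the weight, not $g$, vanishes at $y=\pm1$), the resulting identity $\int_{-1}^1 y^2|g|^2\,dy=\tfrac12\int_{-1}^1(1-y^2)|g|^2\,dy+\int_{-1}^1(1-y^2)\,y\,\mathrm{Re}(\bar g\,g')\,dy$ checks out, and Young's inequality with $\delta=\tfrac23$ indeed gives $\int_{-1}^1|g|^2\,dy\le \tfrac13\int_{-1}^1(1-y^2)|g'|^2\,dy+\tfrac94\int_{-1}^1(1-y^2)|g|^2\,dy$, which implies the stated inequality since $\tfrac94\le 92$. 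This is, however, a genuinely different route from the one the paper relies on: the paper gives no proof here but cites the companion work \cite[Appendix A]{SWX} and frames the lemma as a variant of a Hardy--Littlewood--P\'olya inequality, i.e.\ the intended argument is in the spirit of the Legendre-polynomial expansion used for Lemma \ref{lemmaA7} in this paper (expand $g$ in Legendre modes, use the eigenvalue relation for $((1-y^2)P_m')'$, and absorb the low modes into the weighted $L^2$ term, which is where the crude constant $92$ comes from). Your approach is more elementary and self-contained, requires no boundary conditions or parity assumptions, and yields a much better constant ($\tfrac94$ versus $92$) with the same coefficient $\tfrac13$ on the gradient term; what the expansion approach buys in exchange is structural information about the spectrum of the weighted operator, which is what is actually needed in the refined odd-function version of Lemma \ref{lemmaA7}, but for the present lemma your direct argument fully suffices.
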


		In fact, if $g$ is odd, we have the better estimate. The following lemma is inspired by \cite[Lemma 7]{Rabier1}.
		\begin{lemma}\label{lemmaA7}
			Let $g \in  C^1([-1, 1])$ be an odd function. For any $\delta>0$, there exists a constant $\delta_1=\min\{ \frac{1}{10}\delta,\frac13-\frac16\delta\}$ such that
			\begin{equation}\label{A7-1}
				\int_{-1}^1 |g |^2\,dy\leq \left(\frac12-\delta_1\right)\int_{-1}^1|g' |^2(1-y^2)\,dy+\delta \int_{-1}^1|g |^2(1-y^2)\,dy.
			\end{equation}
		\end{lemma}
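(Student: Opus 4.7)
My plan is to exploit that $g(0)=0$ by oddness and to compare $g$ against the first odd Legendre polynomial $P_1(y)=y$, the eigenfunction which saturates the sharp inequality $\int |g|^2 \le \tfrac{1}{2}\int(1-y^2)|g'|^2$ on the odd subspace. The small improvement $\delta_1$ in the coefficient of $\int(1-y^2)|g'|^2$ is to be paid for by the $\delta$-term via the Legendre spectral gap.

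Concretely, I would decompose $g(y) = cy + h(y)$ with $c = \tfrac{3}{2}\int_{-1}^1 yg\,dy$, so that $\int yh\,dy = 0$ and $h$ is odd, lying in the $L^2$-closed span of $P_3, P_5, \ldots$. Integration by parts together with $\int yh = 0$ gives the clean decompositions
\begin{equation*}
\int_{-1}^1 |g|^2\,dy = \tfrac{2c^2}{3} + \int_{-1}^1 |h|^2\,dy,\qquad \int_{-1}^1 (1-y^2)|g'|^2\,dy = \tfrac{4c^2}{3} + \int_{-1}^1 (1-y^2)|h'|^2\,dy,
\end{equation*}
while the algebraic identity $y(1-y^2) = \tfrac{2}{5}(P_1-P_3)$ generates a mixed term in the weighted norm:
\begin{equation*}
\int_{-1}^1 (1-y^2)|g|^2\,dy = \tfrac{4c^2}{15} - \tfrac{4c}{5}\int_{-1}^1 P_3(y)\,h(y)\,dy + \int_{-1}^1 (1-y^2)|h|^2\,dy.
\end{equation*}
Since $h$ is $L^2$-orthogonal to $P_1$, the spectral gap for the Legendre operator on odd functions furnishes $\int(1-y^2)|h'|^2 \ge 12\int|h|^2$.

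Substituting these three identities into the desired bound \eqref{A7-1} reduces the problem to a quadratic-form inequality in $(c, \|h\|_{L^2})$. After the sharp $\tfrac12$ cancellation, the $c^2$-block demands $\delta_1 \le \delta/5$ (the principal Poincar\'e constraint), the $\|h\|^2$-block enjoys a positive surplus $(5-12\delta_1)\|h\|^2 \ge 0$ from the spectral gap, and the cross term $-\tfrac{4c\delta}{5}\int P_3 h$ is absorbed via Cauchy-Schwarz and AM-GM: for any $\beta > 0$,
\begin{equation*}
\tfrac{4\delta}{5}\sqrt{\tfrac{2}{7}}\,|c|\,\|h\| \;\le\; \tfrac{\delta\sqrt{2/7}}{\beta}\,c^2 + \delta\sqrt{\tfrac{2}{7}}\,\beta\,\textstyle\int|h|^2.
\end{equation*}
Requiring this cross-term bound to fit within both block surpluses produces the compatibility condition $6\delta^2 \le 35(\delta-5\delta_1)(5-12\delta_1)$. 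A direct check shows that the choice $\delta_1 = \delta/10$ satisfies this inequality whenever $\delta \le 5/4$, while $\delta_1 = \tfrac{1}{3}-\tfrac{\delta}{6}$ satisfies it whenever $\delta \ge 5/4$; the two coincide at the transition $\delta = 5/4$, matching the $\min$ in the statement. The main technical obstacle is the careful bookkeeping needed in the two regimes to avoid slack that would spoil the explicit constants.
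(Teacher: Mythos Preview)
Your approach is correct and rests on the same Legendre spectral decomposition as the paper. The difference lies in how the weighted term $\int_{-1}^1(1-y^2)|g|^2\,dy$ is handled. You compute it exactly, picking up the cross-term $-\tfrac{4c}{5}\int P_3 h$, and then absorb this via Cauchy--Schwarz and a discriminant condition $6\delta^2\le 35(\delta-5\delta_1)(5-12\delta_1)$ that must be checked separately in the two regimes. The paper instead applies the Minkowski-type lower bound
\[
\int_{-1}^1(1-y^2)|g|^2\,dy \;\ge\; \tfrac12\int_{-1}^1 C_1^2P_1^2(1-y^2)\,dy - \int_{-1}^1 |g-C_1P_1|^2(1-y^2)\,dy,
\]
then discards the weight in the last integral. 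This sidesteps the cross-term entirely and reduces the proof to two decoupled scalar inequalities, $2(\tfrac12-\delta_1)\ge 1-\tfrac{\delta}{5}$ and $6(\tfrac12-\delta_1)\ge 1+\delta$, which give the two branches of the $\min$ directly. The paper also uses only the cruder gap $-\lambda_m\ge 6$ (from $m=2$) rather than your sharper odd gap $-\lambda_m\ge 12$ (from $m=3$); the slack from this is what allows the Minkowski trick to close. Your route is more precise but needs more bookkeeping; the paper's is cleaner and yields the explicit constants with almost no computation.
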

		\begin{proof}

			Let
			\begin{equation*}
				P_0=\frac{1}{\sqrt2},~P_1=\sqrt{\frac32}y,\cdots
			\end{equation*}
			be the normalized Legendre polynomials on the interval $[-1,1]$, which satisfies the differential equation
			\begin{equation*}
				((1-y^2)P_m')'=\lambda_m P_m
			\end{equation*}
			with $\lambda_m=-m(m+1)$. Since $g$ is an odd function, one can write
			\begin{equation*}
				g=\sum_{m=1}^\infty C_{m}P_m.
			\end{equation*}
			Then it follows from Parseval's identity that one has
			\begin{equation}\label{A7-1-1}
				\int_{-1}^1 |g |^2\,dy=  \sum_{m=1}^\infty C_{m}^2.
			\end{equation}
			Using Minkowski inequality and Young's inequality, one has
			\begin{equation}\label{A7-1-2}
				\begin{aligned}
					&\int_{-1}^1|g |^2(1-y^2)\,dy\\
					\ge &\left[\left(\int_{-1}^1C_1^2P_1^2(1-y^2)\,dy\right)^\frac12-\left(\int_{-1}^1|g-C_1P_1|^2(1-y^2)\,dy\right)^\frac12\right]^2\\
					=&\int_{-1}^1C_1^2P_1^2(1-y^2)\,dy+\int_{-1}^1|g-C_1P_1|^2(1-y^2)\,dy\\
					&- 2\left(\int_{-1}^1C_1^2P_1^2(1-y^2)\,dy\right)^\frac12\left(\int_{-1}^1|g-C_1P_1|^2(1-y^2)\,dy\right)^\frac12\\
					\ge& \frac12\int_{-1}^1C_1^2P_1^2(1-y^2)\,dy-\int_{-1}^1|g-C_1P_1|^2(1-y^2)\,dy.
				\end{aligned}
			\end{equation}
			Combining \eqref{A7-1-1} and \eqref{A7-1-2} gives
			\begin{equation}\label{A7-3}
				\begin{aligned}
					& \int_{-1}^1 |g |^2\,dy-\delta \int_{-1}^1|g |^2(1-y^2)\,dy\\
					\leq & \sum_{m=1}^\infty C_{m}^2 -\frac12\delta \int_{-1}^1C_1^2P_1^2(1-y^2)\,dy+\delta \int_{-1}^1|g-C_1P_1|^2(1-y^2)\,dy.
				\end{aligned}
			\end{equation}
			Hence it follows from straightforward computations that
			\begin{equation}
				\begin{aligned}
					& \int_{-1}^1 |g |^2\,dy-\delta \int_{-1}^1|g |^2(1-y^2)\,dy\\
					\leq & \sum_{m=1}^\infty C_{m}^2 -\frac34\delta C_1^2\int_{-1}^1y^2(1-y^2)\,dy+\delta \int_{-1}^1|g-C_1P_1|^2\,dy\\
					\leq &\sum_{m=1}^\infty C_{m}^2 -\frac15\delta C_1^2+\delta \sum_{m=2}^\infty C_{m}^2\\
					= &\left(1-\frac15\delta\right)C_1^2+ (1+\delta)\sum_{m=2}^\infty C_{m}^2.
				\end{aligned}
			\end{equation}
			
			On the other hand, one uses integration by parts to obtain
			\begin{equation*}
				\int_{-1}^1|g' |^2(1-y^2)\,dy=-\int_{-1}^1((1-y^2)g' )'\overline{g }\,dy=-\sum_{m=1}^\infty \lambda_m C_{m}^2.
			\end{equation*}
			Noting $-\lambda_1=2$ and $-\lambda_m\ge 6$ for any $m\ge 2$, one has
			\begin{equation}\label{A7-4}
				\int_{-1}^1|g' |^2(1-y^2)\,dy\ge 2 C_1^2+6\sum_{m=2}^\infty C_{m}^2.
			\end{equation}
			
			Set $\delta_1=\min\{ \frac{1}{10}\delta,\frac13-\frac16\delta\}$, then one has
			\begin{equation}\nonumber
				\begin{aligned}
					2\left(\frac12 -\delta_1\right)\ge 1-\frac{1}{5}\delta\quad \text{and}\quad
					6\left(\frac12 -\delta_1\right)\ge (1+\delta).
				\end{aligned}
			\end{equation}
			This, together with \eqref{A7-3} and \eqref{A7-4}, gives \eqref{A7-1}. Hence  the proof of this lemma is completed.
			
		\end{proof}

		The following lemma on the Airy function gives the estimate for the boundary layers constructed in Section \ref{secinter}.
		\begin{lemma}\label{airy-est}
			(1) Let $C_{0, n,\Phi}$  be given in \eqref{5-3-62} and $G_{n,\Phi}$ be defined in \eqref{5-3-61}.  It holds that
			\begin{equation}\nonumber
				\tilde{C}_{0}:=\inf \left\{\left|C_{0, n, \Phi}\right|:\ \Phi\ge  1,1\le |n| \leq L \Phi^{\frac{1}{2}}\right\}>0
			\end{equation}
			and
			\begin{equation}\nonumber
				\sup _{\Phi \geq 1} \sup _{1\le |n| \leq L \Phi^{\frac{1}{2}}}  \sup _{\rho \ge R} e^{\rho}\left|\frac{d^k G_{n, \Phi}}{d \rho^k}(\rho)\right|<\infty, ~~k=0,1,2,3,
			\end{equation}
			provided that the constant $R$ is sufficiently large.
			
			(2) There exists a constant $\epsilon\in (0,1)$ such that defining
			\begin{equation}\nonumber
				\Sigma_{\epsilon}:=\left\{\mu \in \mathbb{C} | \text { arg } \mu=-\frac{\pi}{6}, 0 \leq|\mu| \leq \epsilon\right\},
			\end{equation}
			then
			\begin{equation}\nonumber
				K_{\epsilon}:=\inf _{\mu \in \Sigma_{\epsilon}}\left|\int_\ell e^{-\mu z} Ai\left(z+\mu^{2}\right) \,dz\right|\ge\frac16,
			\end{equation}
			where $\ell$ is the contour $\ell:=\left\{re^{i\frac{\pi}{6}}|r\ge 0\right\}$.
		\end{lemma}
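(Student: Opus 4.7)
The plan is to treat the two parts separately, using asymptotics of the Airy function and the integral representation of $G_{n,\Phi}$.

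For part (1), I first record the uniform boundedness of the parameters. A direct computation gives $\nu := |\hat{n}|/\beta = (2/3)^{1/3}(|\hat{n}|^2/\Phi)^{1/3}$ and $|c_{n,\Phi}| := |2\beta\hat{n}/(3i\Phi)| = (|\hat{n}|/\beta)^2$, so under $|\hat{n}| \leq L\Phi^{1/2}$ (i.e.\ $|\hat{n}/L|^{??}$, using $\hat{n}=n/L$ so $|\hat{n}|\leq \Phi^{1/2}$) both $\nu$ and $|c_{n,\Phi}|$ are bounded by $(2/3)^{2/3}$. Since $\widetilde{G}_{n,\Phi}(\rho)=Ai(C_\pm(\rho+c_{n,\Phi}))$ and the ray $C_\pm\mathbb{R}_+$ lies in the sector $|\arg z|\leq \pi/6<\pi/3$, the standard asymptotic $Ai(z)\sim \frac{1}{2\sqrt{\pi}}z^{-1/4}e^{-\frac{2}{3}z^{3/2}}$, uniform on closed subsectors of $|\arg z|<\pi$, yields a uniform bound $|\widetilde{G}_{n,\Phi}(\rho)|\leq M_0 e^{-c_0\rho^{3/2}}$ valid for $\rho\geq R_0$, with constants independent of $n,\Phi$; on $[0,R_0]$ one has a uniform bound by continuity and compactness of the parameter set for $c_{n,\Phi}$.

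Next I analyze $G_{n,\Phi}$ via the identity $G''-\nu^2 G=\widetilde{G}$ which it satisfies. Writing $h(\tau)=e^{\nu\tau}\int_\tau^\infty e^{-\nu s}\widetilde{G}(s)\,ds$ and $G(\rho)=\int_\rho^\infty e^{\nu(\tau-\rho)}h(\tau)\,d\tau$, the estimates $|h(\tau)|\leq \int_\tau^\infty|\widetilde{G}(s)|\,ds$ and $|G(\rho)|\leq \int_\rho^\infty e^{\nu(\tau-\rho)}|h(\tau)|\,d\tau$ combine with the super-exponential decay of $\widetilde{G}$ and the uniform bound on $\nu$ to give $|G_{n,\Phi}(\rho)|\leq M_1 e^{-c_1 \rho^{3/2}}$ for $\rho\geq R$; in particular $\sup_{n,\Phi}|G_{n,\Phi}(0)|<\infty$, which proves $\tilde C_0=\inf|C_{0,n,\Phi}|>0$. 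The relations $G'=-\nu G-h$, $G''=\nu^2 G+\widetilde{G}$, $G'''=-\nu^3 G-\nu^2 h+\widetilde{G}'$ (using the analogous Airy asymptotic for $Ai'$) propagate the super-exponential decay to derivatives, yielding the uniform bounds $e^\rho|G_{n,\Phi}^{(k)}(\rho)|\leq C$ for $k=0,1,2,3$ once $R$ is chosen large.

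For part (2), at $\mu=0$ I compute $I(0):=\int_\ell Ai(z)\,dz$ by contour deformation. Closing the contour between $[0,\infty)$ and $\ell=\{re^{i\pi/6}:r\geq 0\}$ with a circular arc at radius $\to\infty$: $Ai(z)$ decays like $e^{-\frac{2}{3}r^{3/2}\cos(3\theta/2)}$ on the arc, and $\cos(3\theta/2)>0$ uniformly on $\theta\in[0,\pi/6]$, so the arc contribution vanishes. By Cauchy's theorem and the classical identity $\int_0^\infty Ai(t)\,dt=\tfrac{1}{3}$, we obtain $I(0)=\tfrac{1}{3}$. Then I show continuity of $I(\mu)$ on $\Sigma_\epsilon$: on the ray $z=re^{i\pi/6}$ with $\arg\mu=-\pi/6$, $\mu z=|\mu|r\in\mathbb{R}_+$, so $|e^{-\mu z}|=e^{-|\mu|r}\leq 1$ and the shift $\mu^2$ stays in the bounded region $|\mu^2|\leq \epsilon^2$ with $\arg\mu^2=-\pi/3$; the integrand is dominated by $e^{-c r^{3/2}}$ uniformly in $\mu$, so dominated convergence yields $I(\mu)\to I(0)$ as $|\mu|\to 0$. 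Choosing $\epsilon$ so small that $|I(\mu)-I(0)|\leq \tfrac{1}{6}$ gives $|I(\mu)|\geq \tfrac{1}{3}-\tfrac{1}{6}=\tfrac{1}{6}$ on $\Sigma_\epsilon$, which is $K_\epsilon\geq \tfrac{1}{6}$.

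The main technical hurdle is the uniformity in $(n,\Phi)$ throughout part (1): one must verify that the implied constants in the Airy asymptotics, the splitting of the tail into regions $[0,R]$ and $[R,\infty)$, and the amplification factor $e^{\nu(\tau-\rho)}$ in the iterated integral all produce bounds depending only on the a priori bounds for $\nu$ and $c_{n,\Phi}$ (both controlled by $(2/3)^{2/3}$ under the hypothesis $|\hat{n}|\leq L\Phi^{1/2}$) and not on $n,\Phi$ individually. Part (2), by contrast, is essentially a continuity-of-contour-integral argument once the value $I(0)=1/3$ is identified.
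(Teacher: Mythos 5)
Your proof is correct and follows essentially the same route as the paper's source for this lemma: the paper itself omits the argument, citing \cite[Lemma 3.7]{GHM}, whose proof likewise rests on the uniform Airy asymptotics along the rotated ray together with the iterated-integral representation \eqref{5-3-61} for part (1), and on the identity $\int_0^\infty Ai(t)\,dt=\tfrac13$ combined with a contour deformation and a continuity (dominated convergence) argument in $\mu$ for part (2). The only blemishes are cosmetic: the uniform bound on $\nu=|\hat n|/\beta$ is $(2/3)^{1/3}$ rather than $(2/3)^{2/3}$ (irrelevant, since all that matters is a bound independent of $n,\Phi$), and the bound on $\sup_{n,\Phi}|G_{n,\Phi}(0)|$ follows from the same two displayed integral inequalities evaluated at $\rho=0$ rather than from the decay estimate valid only for $\rho\ge R$.
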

		Since the proof of Lemma \ref{airy-est} is exactly the same as that of \cite[Lemma 3.7]{GHM}, we omit the proof here.


		
		\medskip
		
		{\bf Acknowledgement.}
		The research of Wang was partially supported by NSFC grants 12171349 and 12271389.  The work of Xie was partially supported by NSFC grants 11971307 and 1221101620,  Natural Science Foundation of Shanghai 21ZR1433300, and Program of Shanghai Academic Research Leader 22XD1421400. The authors would like to thank Professor Congming Li for helpful discussions.

	\end{document}